\newtheorem{theorem}{Theorem}[section]
\newtheorem{proposition}[theorem]{Proposition}
\newtheorem{lemma}[theorem]{Lemma}
\newtheorem{corollary}[theorem]{Corollary}
\newtheorem*{results}{Main Results}
\theoremstyle{definition}
\newtheorem{definition}[theorem]{Definition}
\newtheorem{remark}{Remark}
\newtheorem{assumption}{Assumption}
\newcommand{\R}{\mathbb{R}}
\renewcommand{\P}{\mathbb{P}}
\newcommand{\E}{\operatorname{\textbf{E}}}
\newcommand{\expect}[1]{\E \left[ #1 \right]}
\newcommand{\condexpect}[2]{\E \left[ \left. #1 \right| #2 \right]}
\newcommand{\indicate}[1]{\mathbf{1}  \left \{ #1 \right \}}
\newcommand{\mt}{\mathcal{T}}
\newcommand{\vsum}{\sum_{|v|=n}}
\renewcommand{\root}{\varsigma}
\newcommand{\hdim}{\operatorname{dim}_H}
\newcommand{\Gt}{\Gamma_t}
\newcommand{\Gtrl}{\Gamma_t(\root_L)}
\newcommand{\Gtrr}{\Gamma_t(\root_R)}
\begin{document}

\title{Diffusions of Multiplicative Cascades}

\author{\begin{tabular}{cc}
Tom Alberts & Ben Rifkind \\
\small{Department of Mathematics} & \small{Department of Mathematics} \\
\small{California Institute of Technology} & \small{University of Toronto} \\
\small{Pasadena, CA, USA} & \small{Toronto, ON, Canada} \\
\end{tabular}
\\}

\date{}

\maketitle

\begin{abstract}
A multiplicative cascade can be thought of as a randomization of a measure on the boundary of a tree, constructed from an iid collection of random variables attached to the tree vertices. Given an initial measure with certain regularity properties, we construct a continuous time, measure-valued process whose value at each time is a cascade of the initial one. We do this by replacing the random variables on the vertices with independent increment processes satisfying certain moment assumptions.  Our process has a Markov property: at any given time it is a cascade of the process at any earlier time by random variables that are independent of the past. It has the further advantage of being a martingale and, under certain extra conditions, it is also continuous. For Gaussian independent increments processes we develop the infinite-dimensional stochastic calculus that describes the evolution of the measure process, and use it to compute the optimal H\"{o}lder exponent in the Wasserstein distance on measures. We also discuss applications of this process to models of tree polymers and one-dimensional random geometry.
\end{abstract}

\let\thefootnote\relax\footnotetext{
\emph{2010 Mathematics Subject Classification}:
Primary:\, 82B27 82B44
\ Secondary:\, 60G42}

\let\thefootnote\relax\footnotetext{\noindent{\slshape\bfseries Keywords:} measure-valued Markov process, infinite dimensional stochastic calculus, tree polymers, one-dimensional random geometry.}

\section{Introduction \label{sec:intro}}

Multiplicative cascades are a particular type of random measures with many interesting statistical properties. The space on which these measures live is not always the same, but there is typically a tree structure underlying their construction and so it is convenient to consider them as living on the boundary of an infinite tree. This is the situation we consider. This has the further advantage that several different models of statistical mechanics are fully described by this framework, most notably tree polymers, branching random walk, and certain models of random walk in random environment.

For simplicity we work on a rooted, infinite binary tree $\mt$, and the boundary $\partial \mt$ is the set of all infinite self-avoiding paths that begin at the root. Elements of $\partial \mt$ are called rays and we denote them by $\xi$. The inputs to the cascade model are a positive measure $\Gamma$ on $\partial \mt$, which can be specified arbitrarily, and an i.i.d. collection of random variables $\{W(v) \}_{v \in \mt}$ attached to the vertices of the tree. The only a priori assumption on the distribution of the $W$ is that it is strictly positive and has mean one. These random variables are then cascaded on to $\Gamma$ to produce a random measure on $\partial \mt$; we denote it by $\Gamma_W$ or sometimes
\begin{align*}
\Gamma_W = \mathcal{C}(\Gamma; W).
\end{align*}
The cascading procedure is simple to describe: for each $n \geq 0$ one uses the random weights $W$ up to generation $n$ to construct a random measure via
\begin{align*}
d \Gamma_W^{(n)}(\xi) = \prod_{i=1}^n W(\xi_i) \, d \Gamma(\xi).
\end{align*}
The random cascade measure is then defined as the limit
\begin{align}\label{eq:limit_defn}
\Gamma_W := \lim_{n \to \infty} \Gamma_W^{(n)}.
\end{align}
A martingale argument shows that the limit exists almost surely for \textit{any} choice of the initial measure $\Gamma$, in the topology of weak convergence on the space of measures. Full details are given in Section 2. As we will see there it \textit{may} happen that $\Gamma_W$ is the zero measure, but nonetheless it is well-defined, and given this the main problem is to determine the properties of $\Gamma_W$ and how they depend on the input measure $\Gamma$ and the cascading distribution $W$. Fundamental properties of cascade measures were derived in \cite{KP76}, and further explorations have been made in several later papers; see for example \cite{Big77, HW92, LR00, OW02, F02}.

Even in the simplest cases the relationship between $\Gamma_W$ and $\Gamma$ is interesting. Observe that if $W = 1$ then $\Gamma_W = \Gamma$, but if the cascading distribution is not identically one then $\Gamma_W$ is necessarily distinct from $\Gamma$. There are two possible alternatives:
\begin{itemize}
\item $\Gamma_W$ may be identically the zero measure, even though $\Gamma$ is not, but
\item if $\Gamma_W$ is not the zero measure then it is genuinely random, meaning it depends on the specific realization of the $W$ variables, but almost surely it is singular with respect to $\Gamma$.
\end{itemize}
The positivity of $\Gamma_W$ is determined by both the regularity of $\Gamma$ (roughly meaning how strongly it concentrates on some rays more than others) and moment properties of the cascading distribution. Full details are given in Section \ref{sec:Background}. The singularity property, however, holds even if the cascading distribution is highly concentrated near one. It is a simple consequence of the fact that along any ray the density is the product of positive, iid, mean one random variables, which almost surely goes to zero as the number of terms in the product goes to infinity.

The main purpose of this paper is to study what happens when the cascading distribution is highly concentrated near one and the cascading procedure is iterated. The scheme is simple: start with a positive measure $\Gamma$ on $\partial \mt$ and cascade once to produce $\Gamma_W$. Since the cascading procedure does not depend on the choice of the initial measure, we may use $\Gamma_W$ as the input measure and cascade it with vertex variables $\{ W^*(v) \}_{v \in \mt}$ that are independent of the $\{ W(v) \}$ collection. This iteration can be repeated indefinitely, at each time cascading with a collection of vertex variables that are independent of all previous ones, and in doing so it produces a discrete time, measure-valued Markov process.

This discrete time process is interesting in its own right, but we prefer instead to study a continuous time version. Intuitively the idea behind the continuous time process is clear: starting from some initial measure, in each infinitesimal unit of time we cascade the previous measure with an independent collection of random variables whose distribution is an infinitesimal perturbation away from the degenerate distribution at one. Repeating this scheme indefinitely builds the process.

As is usual, however, rigorously constructing the continuous time process takes more care than constructing the discrete time one, even though the basic idea is the same. Several different construction techniques could be considered; for example, the discrete time process is well-defined, and the continuous time process could be constructed by taking a weak limit as the discrete time step goes to zero and the cascading distribution concentrates near one. Alternatively, the process is essentially defined by saying that the measure at each time is a cascade of the process at an earlier time (by an independent collection of random variables); this is akin to specifying the transition probabilities of the process, and then the existence would follow from the general theory on measure-valued diffusions (see for example \cite{EK86}).

In this paper we propose a simpler and more direct construction procedure. Instead of appealing to the more abstract concepts above, we simply attach to the vertices of the tree a family of dynamic weights $\{ t \mapsto W_t(v) \}_{v \in \mt}$. Using the cascading procedure defined in equation \eqref{eq:limit_defn}, this gives us a process $t \mapsto \Gamma_t := \Gamma_{W_t}$ of random cascade measures. We choose the weight process $t \mapsto W_t$ so that the $\Gamma_t$ process satisfies the following important Markov property: the value at any given time is a cascade of the value at any previous time, by a noise that is independent of the past of the process. More precisely, our process is defined on an interval $[0,T]$ for some $T > 0$, and has the property that for any $s, t \geq 0$ such that $t+s \leq T$, \textit{both} of the relations
\begin{align*}
\Gamma_{t+s} = \mathcal{C}(\Gamma; W_{t+s}) \quad \mathrm{ and } \quad \Gamma_{t+s}  = \mathcal{C} \left( \mathcal{C}\left( \Gamma; W_t \right); \frac{W_{t+s}}{W_t} \right)
\end{align*}
hold. This is a fully rigorous statement, but should be regarded as a manifestation of the non-rigorous infinitesimal cascading procedure described earlier. The main focus of our paper is to show that, under suitable assumptions on the i.i.d. collection of weight processes $W_t(v)$ attached to the vertices of the tree, the following is true:

\begin{results}
Assume that the process $t \mapsto \log W_t$ is an independent increments process, with $W_0 = 1$, $\expect{W_t} = 1$, and $W_t$ always strictly positive. Assume the process is defined on an interval $[0,T]$ for some $T > 0$. If there is a $\delta > 0$ such that $W_T$ has a finite $(1+\delta)$ moment, and the measure $\Gamma$ is $W_T$-regular (see Definition \ref{defn:regularity}), then
\begin{itemize}
\item the process $\Gamma_t := \mathcal{C} (\Gamma, W_t)$ is well-defined on $[0,T]$, i.e. the event that $\lim_{n \to \infty} \Gamma_t^{(n)}$ exists for all $0 \leq t \leq T$ has full probability,
\item for any $s, t \geq 0$ with $t+s \leq T$, the equality $\Gamma_{t+s} = \mathcal{C}(\Gamma_t, W_{t+s}/W_t)$ also holds almost surely,
\item the process is a martingale with respect to the filtration $\sigma \left( \Gamma_s : s \leq t \right)$,
\item if the process $t \mapsto W_t$ is continuous, then so is the $\Gamma_t$ process in the topology of weak convergence of measures.
\item for $t \mapsto \log W_t$ a Gaussian process the measure process $\Gamma_t$ is H\"{o}lder-($\frac12 - \epsilon$) continuous in the Wasserstein metric on measures for any $\epsilon > 0$, but not H\"{o}lder-($\frac12 + \epsilon$) continuous.
\end{itemize}
\end{results}

These results are intuitive, but we want to emphasize that they are not immediate. It is easily seen that all four of these properties hold trivially for the finite level $t \mapsto \Gamma_t^{(n)}$ processes, but it requires some extra work to carry them over to the limit as $n \to \infty$.  For fixed $t$ and $s$, the Markov property, which is essentially a result about the composition of cascades, was first proven by \cite{WW95} and later reproved in \cite{F}. The existence of a discrete time Markov process would therefore follow from their work. With somewhat different analysis, we take care of the subtle difficulties in extending this notion to a continuous time process.  The main technical difficulty is that the process cannot be started from an arbitrary measure; it has to be started from those which enjoy a sufficient amount of regularity. For practical applications the regularity condition we use is not at all restrictive, but we have to ensure that once the process begins it will stay within the class of sufficiently regular measures so that it can be continued. In Section \ref{sec:Background} we describe exactly what we mean by sufficiently regular, and in Section \ref{sec:process} we prove that the evolution of the regularity of the process is well-behaved. This is a part of our proof of the results above.

It is also important to note that our main technique of replacing static weights with time varying processes has already been carried out for several other models. Likely the most prominent one is Dyson's Brownian motion, which is obtained by replacing the Gaussian entries of the GUE matrices with standard Brownian motions. More recently, however, the idea has been applied to the Sherrington-Kirkpatrick model of spin glasses in \cite{CN95}, and then re-applied to greater effect by a series of other authors \cite{BKL02, Tin05}. The paper \cite{MRT11} also uses the same technique in the context of lattice polymer models, which are somewhat similar to ours through the connection between tree polymers and multiplicative cascades. However, the main purpose of these papers is to use the dynamic weights technique to derive growth exponents and fluctuation behavior for partition functions of Gibbs measures as the size of the system grows large, whereas we are more concerned with showing that the infinite volume measure-valued process has the properties listed above.

We put particular emphasis on the results derived in Sections \ref{sec:gaussian} and \ref{sec:Holder}, where we specialize to the case when $\log W_t$ is a Brownian motion. This allows us to extend classical stochastic calculus results to this infinite-dimensional setting and use them to describe the evolution of the measures. One of our long term goals is to use these stochastic calculus techniques to compute explicit formulas for probability densities of certain quantities related to the measure; for example the total mass at any fixed time. We believe this is possible, but ultimately it will require more refined techniques that are beyond the scope of the current paper. Nonetheless, interesting results can already be derived using the stochastic calculus that we develop, and in Section \ref{sec:Holder} we use it to show H\"{o}lder continuity of the measure process in the Wasserstein distance. We also show that the optimal H\"{o}lder exponent is $1/2$. Both are somewhat surprising facts, since for any $t \neq s$ the measures $\Gamma_t$ and $\Gamma_s$ are almost surely singular; hence the process $t \mapsto \Gamma_t$ is very discontinuous in the total variation distance. Given this it is not immediately clear that continuity can be expected in any topology stronger than the one induced by weak convergence, and our result should be viewed in this context.

The outline of this paper is as follows: in Section \ref{sec:Background} we set up our notation and recall some well known properties of cascade measures. In Section \ref{sec:process} we construct the process and show that it is well-defined, and give proofs for the main results listed above. In Section \ref{sec:gaussian} we discuss the special case when the weight process is an exponential of a Brownian motion, and use stochastic calculus to describe the infinitesimal evolution of the process. This shows one advantage of our construction over the more abstract possibilities listed earlier: it allows for a full description of the evolution of the measure-valued process in terms of the input weight process $t \mapsto W_t(v)$. In Section \ref{sec:applications} we describe possible applications of our process to models of tree polymers and to the KPZ formula of one-dimensional random geometry.\newline

\textbf{Acknowledgements:} We thank B\'{a}lint Vir\'{a}g for several helpful comments and suggestions, and Sourav Chatterjee for pointing out the connection with \cite{CN95}.

\section{Background and Notation \label{sec:Background}}

We begin with our notation for trees. Let $\mt$ be a rooted infinite binary tree and denote the root by $\root$. Given a vertex $v \in \mt$ we let $|v|$ be its generation, by which we mean its distance from the root. Let $v_L$ and $v_R$ be the left and right offspring of $v$, respectively, and write $v_p$ for the parent of $v$. We let $\mt(v)$ be the subtree of $\mt$ rooted at $v$. Note that when working on subtrees we still use $|u|$ to denote the distance from $\root$, not from the root of the subtree.

We will mostly be working on the boundary of $\mt$, which we denote by $\partial \mt$. Recall that $\partial \mt$ is the set of all infinite self-avoiding paths in the tree that begin at the root. Elements of $\partial \mt$ are called rays and are usually denoted by $\xi$. We denote by $\xi_n$ the vertex in the $n^{th}$ generation of the ray $\xi$. Given two rays $\xi$ and $\zeta$ we let $\xi \wedge \zeta$ be the vertex of $\mt$ that is the last common ancestor of $\xi$ and $\zeta$. For a given vertex $v \in \mt$ we let $\partial \mt(v)$ be the set of all rays passing through $v$.

\subsection{Measures on $\partial \mt$ \label{sec:general_measures}}

Even though $\partial \mt$ is an uncountable set, a measure on $\partial \mt$ is completely determined by the countable collection of values $\{ \Gamma(\partial \mt(v)) \}_{v \in \mt}$. Hence every positive, finite measure on $\partial \mt$ can be identified with a function $\Gamma : \mt \to \R_+$ satisfying the two conditions
\begin{itemize}
\item $0 < \Gamma(\root) < \infty$,
\item for every vertex $v \in \mt$, $\Gamma(v) = \Gamma(v_L) + \Gamma(v_R).$
\end{itemize}
Due to this identification, measures on $\partial \mt$ are also called flows on $\mt$. As long as $0 < \Gamma(\root) < \infty$ it is possible to normalize $\Gamma$ to be a probability measure, i.e. so that $\Gamma(\root) = 1$. Sometimes we denote the normalized measure by $\Gamma^*$, but in general we do not assume that we are working with probability measures.

A special measure on $\partial \mt$ is the ``Lebesgue'' measure given by $\theta(v) = 2^{-|v|}$. Observe that $\theta$ is also the measure induced on $\partial \mt$ by constructing random paths via simple random walk; that is, starting at the root and then using independent and unbiased coin flips at each vertex to decide whether to move left or right down the tree.

The topology on measures is as follows: we say that a sequence of measures $\Gamma_n$ converges to $\Gamma$ if $\Gamma_n(v) \to \Gamma(v)$ for all $v \in \mt$. This is equivalent to weak convergence of $\Gamma_n \to \Gamma$, when the topology on $\partial \mt$ is generated by the metric
\begin{align*}
d(\xi, \eta) = \theta(\xi \wedge \eta) = 2^{-|\xi \wedge \eta|}.
\end{align*}

For a vertex $v \in \mt$ we will write $\Gamma_{|v}$ for the measure restricted to the subtree $\mt(v)$.

\subsection{Random Cascade Measures on $\partial \mt$}

In this section we describe how to take a measure $\Gamma$ on $\partial \mt$ and a collection of random variables to construct a cascade measure. Let $W$ be a random variable that is positive almost surely and has mean one. We are mostly concerned with its distribution which we call the \textit{cascading distribution}. Assume that $W$ is not identically one, and therefore Jensen's inequality implies that $\expect{\log W} < 0$.

Let $\{W(v)\}_{v \in \mt}$ be a collection of i.i.d. random variables with common distribution $W$. From this collection we build a random function $X : \mt \to \R_+$ defined by
\begin{align*}
X(\xi_n) = \prod_{i=1}^n W(\xi_i).
\end{align*}
Then for each $n \geq 0$ we construct a random measure $\Gamma_W^{(n)}$ by specifying the Radon-Nikodym derivative with respect to $\Gamma$ as
\begin{align*}
d \Gamma_W^{(n)}(\xi) := X(\xi_n) \, d \Gamma(\xi).
\end{align*}
The random cascade measure is then defined as the limit of $\Gamma_W^{(n)}$ as $n \to \infty$. Recall that the topology is pointwise in the vertices, meaning that
\begin{align}\label{cascade_defn}
\Gamma_W(v) = \lim_{n \to \infty} \Gamma_W^{(n)}(v)
\end{align}
for every $v \in \mt$. A simple martingale argument, which we now recall, shows that the limit always exists. First consider the case $v = \root$, so that
\begin{align*}
\Gamma_W^{(n)}(\root) = \int_{\partial \mt} X(\xi_n) d \Gamma(\xi).
\end{align*}
It is easy to see that $X(\xi_n)$ is a martingale with respect to the filtration
\begin{align*}
\mathcal{W}_n := \sigma(W(v): |v| \leq n),
\end{align*}
and therefore so is $\Gamma_W^{(n)}(\root)$ by Fubini's Theorem. Since $\Gamma_W^{(n)}(\root)$ is positive it converges almost surely, and since positivity of the limit does not depend on any finite collection of the $W(v)$ variables a standard $0$-$1$ law argument shows that the limit is almost surely zero or almost surely strictly positive. In the case that $\Gamma = \theta$ Kahane and Peyriere \cite{KP76} showed that
\begin{align*}
\expect{W \log W} < \log 2
\end{align*}
is a necessary and sufficient condition for  the limit to be positive. In the case of a general measure $\Gamma$ it remains an open problem to determine sharp criterion for when $\Gamma_W(\root) > 0$, but there are many known sufficient conditions involving moment of $W$ and the regularity of $\Gamma$. We will use a condition of Fan \cite{F02}, for which we need the following definitions:

\begin{definition}\label{defn:regularity}
For $\Gamma$ a measure on $\partial \mt$, define the \textit{pressure function} $\lambda_{\Gamma} : [0, \infty) \to \R$ by
\begin{align*}
\lambda_{\Gamma}(h) := \limsup_{n \to \infty} \frac{1}{n} \log \sum_{|v|=n} \Gamma(v)^h.
\end{align*}
We will say that a measure $\Gamma$ on $\partial \mt$ is $W$-regular if
\begin{align*}
\expect{W \log W} + \lambda_{\Gamma}'(1+) < 0.
\end{align*}
We say that it is $W$-irregular if
\begin{align*}
\expect{W \log W} + \lambda_{\Gamma}'(1-) > 0.
\end{align*}
\end{definition}

Observe that $\lambda_{\Gamma}(1) = 0$ for any $\Gamma$. Fan \cite{F02} uses the pressure function to derive the following condition:

\begin{proposition}[\cite{F02}]
Suppose there exists a $\delta > 0$ with $\expect{W^{1+\delta}} < \infty$ for some $\delta > 0$. Then
\begin{itemize}
\item if $\Gamma$ is $W$-regular then $\Gamma_W(\root) > 0$ almost surely,
\item if $\Gamma$ is $W$-irregular then $\Gamma_W(\root) = 0$ almost surely.
\end{itemize}
\end{proposition}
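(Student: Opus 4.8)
The plan is to route both dichotomies through the single convex function
\begin{align*}
\Lambda(h) := \lambda_{\Gamma}(h) + \log \expect{W^h},
\end{align*}
which governs the exponential growth rate of the deterministic quantities $\left(\sum_{|v|=n}\Gamma(v)^h\right)\expect{W^h}^n$. Since $\lambda_{\Gamma}$ is a $\limsup$ of convex functions (each $h \mapsto \frac1n \log \sum_{|v|=n}\Gamma(v)^h$ is convex by H\"{o}lder) and $h \mapsto \log\expect{W^h}$ is convex, $\Lambda$ is convex and its one-sided derivatives exist. We have $\Lambda(1) = \lambda_{\Gamma}(1) + \log\expect{W} = 0$, and since $\frac{d}{dh}\log\expect{W^h}\big|_{h=1} = \expect{W\log W}$, the $W$-regularity condition is exactly $\Lambda'(1+) < 0$ and the $W$-irregularity condition is exactly $\Lambda'(1-) > 0$. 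The whole proof reduces to exploiting the sign of $\Lambda$ just to either side of $h=1$.

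For the first (regular) claim, write $M_n := \Gamma_W^{(n)}(\root)$ for the nonnegative martingale already discussed. By the $0$-$1$ law it suffices to show $\prob{\Gamma_W(\root) > 0} > 0$, and since $\expect{M_n} = \Gamma(\root)$ this follows once we know $M_n \to \Gamma_W(\root)$ in $L^1$. I would obtain this by establishing $\sup_n \expect{M_n^p} < \infty$ for some $p = 1 + \delta' \in (1, 1+\delta]$, which gives uniform integrability. To bound the $p$-th moment I would use the branching decomposition
\begin{align*}
M_n = W(\root_L)\, M_{n-1}(\root_L) + W(\root_R)\, M_{n-1}(\root_R),
\end{align*}
where $M_{n-1}(\root_L)$ and $M_{n-1}(\root_R)$ are the level-$(n-1)$ cascade masses on the two subtrees (built from $\Gamma_{|\root_L}$ and $\Gamma_{|\root_R}$), independent of each other and of the two root weights. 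Combining the elementary inequality $(x+y)^p \le x^p + y^p + p\,x^{p-1}y$ (valid for $1 < p \le 2$, $x,y \ge 0$) with independence and Jensen's inequality ($\expect{W^{p-1}} \le 1$ and $\expect{M_{n-1}(\root_L)^{p-1}} \le \Gamma(\root_L)^{p-1}$) yields a recursion of the form
\begin{align*}
\expect{M_n(v)^p} \le \expect{W^p}\left( \expect{M_{n-1}(v_L)^p} + \expect{M_{n-1}(v_R)^p} \right) + p\,\Gamma(v_L)^{p-1}\Gamma(v_R)
\end{align*}
for the subtree moments. Bounding the last term by $p\,\Gamma(v)^p$ and unrolling from the root gives
\begin{align*}
\expect{M_n^p} \le \expect{W^p}^n \sum_{|w|=n}\Gamma(w)^p + p\sum_{k=0}^{n-1}\expect{W^p}^k \sum_{|u|=k}\Gamma(u)^p,
\end{align*}
and each summand is controlled by $e^{k(\Lambda(p)+o(1))}$. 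Because $\Lambda(1) = 0$ and $\Lambda'(1+) < 0$, there is an interval to the right of $1$ on which $\Lambda < 0$; choosing $p$ in that interval and below $1+\delta$ (so that $\expect{W^p} < \infty$) makes the geometric series converge and the leading term vanish, which is the desired uniform $L^p$ bound.

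The second (irregular) claim is softer. For $s \in (0,1)$ the subadditivity $(a+b)^s \le a^s + b^s$ gives, directly from $M_n = \sum_{|v|=n}\Gamma(v)\prod_{i=1}^n W(v_i)$,
\begin{align*}
\expect{M_n^s} \le \sum_{|v|=n}\Gamma(v)^s\, \expect{\prod_{i=1}^n W(v_i)^s} = \left(\sum_{|v|=n}\Gamma(v)^s\right)\expect{W^s}^n,
\end{align*}
whose $\frac1n\log$ has $\limsup$ equal to $\Lambda(s)$. Since $\Lambda(1)=0$ and $\Lambda'(1-) > 0$, we have $\Lambda(s) < 0$ for $s$ slightly below $1$; fixing such an $s$ (note $\expect{W^s} \le 1 < \infty$ automatically) forces $\expect{M_n^s} \to 0$. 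By Fatou, $\expect{\Gamma_W(\root)^s} \le \liminf_n \expect{M_n^s} = 0$, so $\Gamma_W(\root) = 0$ almost surely.

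I expect the genuine work to be concentrated in the regular case. The main obstacle is the uniform $L^p$ bound: one must control the moment recursion cleanly and, most delicately, convert the $\limsup$ definition of $\lambda_{\Gamma}$ together with the one-sided derivative $\lambda_{\Gamma}'(1+)$ into a usable strict inequality $\Lambda(p) < 0$ for an exponent $p$ that simultaneously lies in $(1,1+\delta]$, so that $\expect{W^p}$ is finite and the recursion is meaningful. The irregular case, by contrast, needs only subadditivity and Fatou, and no moment hypothesis beyond $\expect{W} = 1$.
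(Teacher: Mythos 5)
Your proof is correct, but it does not follow the route the paper takes: the paper states this proposition as a citation to Fan \cite{F02} and supplies no proof of its own, while its surrounding machinery (Lemma \ref{lemma:rate} and Corollary \ref{corollary:Lp_convergence}) independently re-derives the regular half by a different argument. There the telescoping martingale increments $\Gamma_W^{(n+1)}(\root)-\Gamma_W^{(n)}(\root)$ are bounded in $L^h$ via the von Bahr--Esseen inequality, yielding an \emph{exponential rate} of $L^h$ convergence governed by $\alpha(h)=\lambda_\Gamma(h)+\log\expect{W^h}$; positivity of $\Gamma_W(\root)$ then falls out of $L^1$ convergence. Your argument instead runs the classical Kahane--Peyri\`ere/Biggins recursion on $p$-th moments of the partition function, using $(x+y)^p\le x^p+y^p+px^{p-1}y$ and Jensen to get $\sup_n\expect{M_n^p}<\infty$, hence uniform integrability. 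Both approaches hinge on the same quantity (your $\Lambda$ is the paper's $\alpha$) being negative just to the right of $h=1$, and both correctly convert the one-sided derivative hypothesis into $\Lambda(p)<0$ for $p\in(1,1+\delta]$ via the difference quotient at $h=1$. What the paper's route buys is the quantitative decay rate, which is indispensable later (uniformity in $t$, Borel--Cantelli, the Markov property); what yours buys is self-containedness and, for the irregular half --- which the paper never proves at all --- a genuinely elementary argument (subadditivity of $x\mapsto x^s$ plus Fatou) that, as you rightly observe, needs no moment hypothesis beyond $\expect{W}=1$. Two small points worth making explicit in a final write-up: the zero--one law you invoke is justified because $\{\Gamma_W(\root)>0\}$ is a tail event for the independent generations of weights, and the passage from $\Lambda'(1-)>0$ to $\Lambda(s)<0$ for $s$ slightly below $1$ uses only the existence of that one-sided derivative (implicit in the definition of $W$-irregularity), not convexity.
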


Observe that if $\lambda_{\Gamma}$ is differentiable at $h=1$ then the condition of $W$-regularity is close to sharp. For $\Gamma = \theta$ we have $\lambda_{\theta}(h) = (1-h) \log 2$, and hence the condition of Kahane and Peyriere is recovered.

\begin{remark} \label{remark:inherit_regular}
Let $W_1$ and $W_2$ be two distinct cascading distributions, and suppose there is an $\epsilon > 0$ such that $\expect{W_1^h} \leq \expect{W_2^h} < \infty$ for $h \in [1, 1+\epsilon]$. Then since
\begin{align*}
\expect{W \log W} = \lim_{h \downarrow 0} \frac{\expect{W^h} - 1}{h}
\end{align*}
it follows that $\expect{W_1 \log W_1} \leq \expect{W_2 \log W_2}$. Hence $W_2$-regularity of $\Gamma$ implies $W_1$-regularity of $\Gamma$.
\end{remark}

\begin{remark} \label{remark:atom_free}
The assumption of $W$-regularity implicitly means that $\lambda_{\Gamma}$ is differentiable from the right at $h=1$.
\end{remark}

\begin{remark} \label{remark:submeasure_regularity}
It is important to note that $W$-regularity of a measure is a property that is inherited by all of its submeasures.  Indeed, since $\lambda_{\Gamma}$ is computed over a larger sum than $\lambda_{\Gamma_{|v}}$, it follows that $\lambda_{\Gamma}(h) \geq \lambda_{\Gamma_{|v}}(h)$ for all $h$. But also $\lambda_{\Gamma}(1) = \lambda_{\Gamma_{|v}}(1) = 0$, and therefore by the Mean Value Theorem
\begin{align*}
0 \leq \lambda_{\Gamma}(1+\epsilon) - \lambda_{\Gamma_{|v}}(1+\epsilon) = \lambda_{\Gamma}'(1+s) - \lambda_{\Gamma_{|v}}'(1+s)
\end{align*}
for some $s \in (0,\epsilon)$. Taking $\epsilon$ to zero gives $$\lambda_{\Gamma_{|v}}'(1+) \leq \lambda_{\Gamma}'(1+),$$ which implies $W$-regularity of $\Gamma_{|v}$.
\end{remark}

\begin{remark} \label{remark:submeasure_cascade}
We have only shown existence of the limit \eqref{cascade_defn} in the $v = \root$ case, and it is important to note that this only required that $\Gamma$ is a positive measure on $\partial \mt$. The regularity of $\Gamma$ determines whether the limit is positive or zero. But these facts and the self-similarity of the tree also combine to give us the existence and positivity of the limit for $v \neq \root$. Indeed, assume $n > |v|$, so that
\begin{align}
\Gamma_W^{(n)}(v) &= \int_{\partial \mt} X(\xi_n) \indicate{\xi \in \partial \mt(v)} d \Gamma(\xi) \notag \\
&= X(v) \int_{\partial \mt(v)} \frac{X(\xi_{n-|v|})}{X(v)} d \Gamma_{|v}(\xi). \label{eq:level_n_cascade}
\end{align}
But the integral term is just the level $n - |v|$ cascade of the $\Gamma_{|v}$ measure by the random variables $W_{|v} = \{ W(u) : u \in \mt(v) \}$, hence the martingale argument for the $v = \rho$ case also shows that its limit exists as $n \to \infty$. Its positivity can again be determined by Fan's condition, and by the last remark $W$-regularity is inherited by all submeasures. Thus if $\Gamma$ is $W$-regular then $\Gamma_W(v) > 0$ for all $v \in \mt$ with $\Gamma(v) > 0$. Taking the limit as $n \to \infty$ in equation \eqref{eq:level_n_cascade} gives the relation
\begin{align}\label{eq:subtree_mass}
\frac{\Gamma_W(u)}{X(v)} = \mathcal{C} \left( \Gamma_{|v}; W_{|v} \right)(u)
\end{align}
for all $u \in \mt(v)$.
\end{remark}

Finally we remark that even though the limits in \eqref{cascade_defn} are defined pointwise at each vertex, the limiting object $\Gamma_W$ is automatically a measure on $\partial \mt$. This follows from the definition of the level $n$ cascade as a measure, and therefore
\begin{align*}
\Gamma_W^{(n)}(v) = \Gamma_W^{(n)}(v_L) + \Gamma_W^{(n)}(v_R).
\end{align*}
Now take limits as $n \to \infty$.

\subsection{Rates of Convergence for the Cascading Procedure}

Our analysis in this section relies on that in [\cite{F02}]. In particular we will need to assume that the cascade variable $W$ satisfies a moment constraint and that the measure $\Gamma$ is $W$-regular.

\begin{assumption}\label{assumption:exp_decay}
We assume that
\begin{itemize}
\item There is a $\delta>0$ such that $\expect{W^{1+\delta}} < \infty$.
\item The measure $\Gamma$ is $W$-regular.
\end{itemize}
\end{assumption}

These assumptions allow for exponential control on the decay of the cascade measure.

\begin{definition} \label{remark:exp_decay}
Define
\begin{align*}
\alpha(h) := \alpha(h; W, \Gamma) = \limsup_{n \to \infty} \frac{1}{n} \log \sum_{|v|=n} \Gamma(v)^h \expect{X(v)^h} = \lambda_{\Gamma}(h) + \log \expect{W^h}.
\end{align*}
The moment assumption on $W$ implies that $\alpha(h) < \infty$ for $h$ in a neighborhood of $1$. Since $\alpha(1) = 0$ it is straightforward to compute that $\Gamma$ being $W$-regular implies that $\alpha'(1+) < 0$, and therefore $\alpha(1+ \epsilon) < \alpha(1) = 0$ for $\epsilon$ sufficiently small. Therefore we also define
$$h_W := \sup \{ h \geq 1 : \alpha(h) < 0 \},$$
and by the last remarks we have $h_W >1$ under Assumption \ref{assumption:exp_decay}.
\end{definition}

Much of our analysis will rely on having a rate of convergence of $\Gamma_W^{(n)}$ to $\Gamma_W$. We will heavily make use of the following lemma:

\begin{lemma}\label{lemma:rate}
For $1 \leq h \leq 2$ there is a positive constant $C = C(h)$ such that
\begin{align*}
\left| \left| \Gamma_W^{(n+1)}(\root) - \Gamma_W^{(n)}(\root) \right| \right|_{L^h} \leq C ||W||_{L^h}^{n+1} \left( \sum_{|v| = n} \Gamma(v)^h \right)^{1/h}  \!\!\!\!\!\! ,
\end{align*}
and therefore by the triangle inequality
\begin{align*}
\left| \left| \Gamma_W(\root) - \Gamma_W^{(n)}(\root) \right| \right|_{L^h} \leq C \sum_{m>n} ||W||_{L^h}^m \left( \sum_{|v| = m} \Gamma(v)^h \right)^{1/h} \!\!\!\!\!\!.
\end{align*}
\end{lemma}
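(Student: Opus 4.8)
The plan is to write the increment $\Gamma_W^{(n+1)}(\root) - \Gamma_W^{(n)}(\root)$ as an explicit sum of conditionally independent, mean-zero terms and then apply a moment inequality tailored to the range $1 \le h \le 2$. Using $X(\xi_{n+1}) = X(\xi_n)\,W(\xi_{n+1})$ and organizing the integrals by the vertices $w$ at generation $n+1$ (with $w_p$ the parent of $w$), both $\Gamma_W^{(n+1)}(\root) = \sum_{|w|=n+1} X(w_p) W(w) \Gamma(w)$ and $\Gamma_W^{(n)}(\root) = \sum_{|w|=n+1} X(w_p)\Gamma(w)$, so that
\begin{align*}
\Gamma_W^{(n+1)}(\root) - \Gamma_W^{(n)}(\root) = \sum_{|w| = n+1} X(w_p)\, (W(w) - 1)\, \Gamma(w).
\end{align*}
Conditioned on $\mathcal{W}_n = \sigma(W(u) : |u| \le n)$, each factor $X(w_p)$ is known, while the weights $\{W(w) : |w| = n+1\}$ are i.i.d., independent of $\mathcal{W}_n$, and have mean one; hence the summands are conditionally independent with conditional mean zero, which is exactly the structure the moment inequality requires.

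For the $L^h$ bound I would invoke the von Bahr--Esseen inequality (equivalently a conditional Marcinkiewicz--Zygmund / Burkholder inequality), which gives $\E|\sum_i Z_i|^h \le 2 \sum_i \E|Z_i|^h$ for independent mean-zero $Z_i$ when $1 \le h \le 2$. Applying it under the conditional law given $\mathcal{W}_n$ and then taking expectations yields
\begin{align*}
\E \left| \Gamma_W^{(n+1)}(\root) - \Gamma_W^{(n)}(\root) \right|^h \le 2\, \E|W-1|^h \sum_{|w|=n+1} \E[X(w_p)^h]\, \Gamma(w)^h.
\end{align*}
I would then compute the remaining moments directly: since $X(w_p)$ is a product of $n$ i.i.d. copies of $W$, $\E[X(w_p)^h] = (\E W^h)^n = \norm{W}{L^h}^{hn}$, and I would bound $\E|W-1|^h \le (\norm{W}{L^h} + 1)^h \le (2\,\norm{W}{L^h})^h$, which conveniently supplies the extra power of $\norm{W}{L^h}$ needed to reach the factor $\norm{W}{L^h}^{n+1}$ in the statement.

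The last structural step converts the generation-$(n+1)$ sum into the generation-$n$ sum appearing in the claim. This uses superadditivity: for $h \ge 1$ and $a,b \ge 0$ one has $a^h + b^h \le (a+b)^h$, so summing over each parent's two children gives $\sum_{|w|=n+1}\Gamma(w)^h = \sum_{|v|=n}(\Gamma(v_L)^h + \Gamma(v_R)^h) \le \sum_{|v|=n}\Gamma(v)^h$. Collecting constants into $C(h) = 2^{1+1/h}$ and taking $h$-th roots gives the first displayed inequality. The second follows from the triangle inequality applied to the partial sums $\Gamma_W^{(N)}(\root) - \Gamma_W^{(n)}(\root) = \sum_{m=n}^{N-1}(\Gamma_W^{(m+1)}(\root) - \Gamma_W^{(m)}(\root))$, bounding each term by the first inequality and then letting $N \to \infty$ via Fatou's lemma (using the a.s.\ convergence $\Gamma_W^{(N)}(\root) \to \Gamma_W(\root)$ established earlier).

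I expect the main obstacle to be the justification of the moment inequality in the conditional setting: one must verify carefully that the summands are genuinely conditionally independent and conditionally mean zero so that von Bahr--Esseen can be applied fibrewise before integrating out $\mathcal{W}_n$, and one must check $\E|W-1|^h < \infty$ over the claimed range. The latter holds for $h$ up to $1+\delta$ under Assumption~\ref{assumption:exp_decay}, and outside that range the stated bound is vacuously true since its right-hand side is infinite.
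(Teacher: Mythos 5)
Your proof is correct and follows essentially the same route as the paper: the same decomposition $\sum_{|w|=n+1}X(w_p)(W(w)-1)\Gamma(w)$ followed by the von Bahr--Esseen inequality (the paper applies it unconditionally, since the stated form already allows the $U_i$ to be dependent among themselves provided they are independent of the $V_i$, so your conditional framing is a harmless variant). Your explicit superadditivity step $\sum_{|w|=n+1}\Gamma(w)^h\le\sum_{|v|=n}\Gamma(v)^h$ and the Fatou passage to the limit are small details the paper leaves implicit, but the argument is the same.
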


The proof relies on the following inequality of von Bahr and Esseen:

\begin{lemma}[\cite{BE65}] \label{lemma:bahr_esseen}
Let $\{U_i \}$ and $\{V_i \}$ be sequences of random variables that are independent of each other. Also assume that the $\{ V_i \}$ are mutually independent, and that $\expect{V_i} = 0$ for all $i$. Then for $1 \leq h \leq 2$ there is a universal constant $c = c(h)$ such that
\begin{align*}
\expect{\left( \sum_i U_i V_i \right)^h} \leq c \sum_{i} \expect{U_i^h} \expect{V_i^h}.
\end{align*}
\end{lemma}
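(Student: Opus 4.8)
The plan is to deduce the weighted inequality from the classical unweighted von Bahr--Esseen bound by conditioning, and to prove the unweighted bound by a telescoping argument that exploits the mean-zero hypothesis. The crucial point to notice is that the $\{ U_i \}$ are \emph{not} assumed mutually independent --- only the two sequences are independent of each other --- so one cannot telescope the weighted sum directly, because $U_n V_n$ need not be independent of the partial sum $\sum_{i<n} U_i V_i$. Instead I would condition on $\mathcal{U} := \sigma(U_i : i)$. Since $\sigma(V_i : i)$ is independent of $\mathcal{U}$, the conditional law of $(V_i)$ given $\mathcal{U}$ is its unconditional law; in particular, given $\mathcal{U}$ the $V_i$ remain mutually independent with $\condexpect{V_i}{\mathcal{U}} = \expect{V_i} = 0$, while the $U_i$ become constants. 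Granting the unweighted bound with constant $c = c(h)$, applied conditionally to the independent mean-zero variables $U_i V_i$, gives
\begin{align*}
\condexpect{\left| \sum_i U_i V_i \right|^h}{\mathcal{U}} \leq c \sum_i |U_i|^h \, \expect{|V_i|^h},
\end{align*}
and taking expectations over $\mathcal{U}$ yields $\expect{ | \sum_i U_i V_i |^h } \leq c \sum_i \expect{|U_i|^h} \expect{|V_i|^h}$, which is the stated inequality (with $U_i \geq 0$ in the intended application, so that $|U_i|^h = U_i^h$).

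For the unweighted statement I would set $S_k = \sum_{i \le k} V_i$ and induct on $n$. The engine is the elementary scalar inequality
\begin{align*}
|a + b|^h \leq |a|^h + h\, b\, \operatorname{sgn}(a)\, |a|^{h-1} + c_h\, |b|^h, \qquad 1 \leq h \leq 2,
\end{align*}
valid for all real $a, b$ with a finite constant $c_h$. Applying it with $a = S_{n-1}$ and $b = V_n$ and taking expectations, the middle term vanishes: $V_n$ is independent of $S_{n-1}$ and $\expect{V_n} = 0$, so $\expect{V_n \operatorname{sgn}(S_{n-1}) |S_{n-1}|^{h-1}} = \expect{V_n} \, \expect{\operatorname{sgn}(S_{n-1}) |S_{n-1}|^{h-1}} = 0$. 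Hence $\expect{|S_n|^h} \leq \expect{|S_{n-1}|^h} + c_h \expect{|V_n|^h}$, and iterating from the trivial base case $n = 1$ gives $\expect{|S_n|^h} \leq c_h \sum_i \expect{|V_i|^h}$. The mean-zero hypothesis enters precisely here, to annihilate the first-order term $h\, b\, \operatorname{sgn}(a) |a|^{h-1} = b\, \phi'(a)$ attached to $\phi(x) = |x|^h$; this is why the bound cannot hold without centering.

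The main obstacle is the scalar inequality itself. Its content is that the convexity of $\phi(x) = |x|^h$ is ``no worse than quadratic'' when $h \leq 2$: convexity alone supplies only the reverse bound $\phi(a+b) \geq \phi(a) + \phi'(a) b$, so obtaining an upper bound with a pure $|b|^h$ remainder forces one to use the explicit form of $\phi$. I would prove it by homogeneity. The case $b = 0$ is immediate, and for $b \neq 0$ both sides are positively homogeneous of degree $h$; dividing by $|b|^h$ and setting $t = a/b$ collapses the cases $b > 0$ and $b < 0$ to the single one-variable inequality $\phi_h(t) \geq 0$, where
\begin{align*}
\phi_h(t) := c_h + |t|^h + h\, \operatorname{sgn}(t)\, |t|^{h-1} - |t+1|^h.
\end{align*}
This is a finite calculus problem: $\phi_h$ is smooth off $\{0, -1\}$, its value is controlled at those two points and in the limits $t \to \pm \infty$ (where the degree-$h$ terms cancel and the remainder has degree $h - 2 \leq 0$, hence stays bounded precisely because $h \leq 2$), and maximizing the deficit $|t+1|^h - |t|^h - h\, \operatorname{sgn}(t) |t|^{h-1}$ over $t$ shows that $c_h = 2$ makes $\phi_h \geq 0$ uniformly for $h \in [1,2]$ (the supremum is approached as $h \downarrow 1$ and $t \uparrow 0$). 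With the scalar inequality established the remaining steps are bookkeeping, and the constant delivered by the lemma is this $c = c(h)$.
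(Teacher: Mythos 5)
The paper offers no proof of this lemma to compare against: it is imported verbatim from von Bahr and Esseen \cite{BE65}, and is used as a black box. Your argument is correct, and it is in essence the classical proof from that reference: reduce the weighted inequality to the unweighted one by conditioning on $\mathcal{U} = \sigma(U_i : i)$ --- equivalently by Fubini, since for deterministic weights $u_i$ the variables $u_i V_i$ are independent and centered, so $\expect{\left| \sum_i u_i V_i \right|^h} \leq c \sum_i |u_i|^h \expect{|V_i|^h}$, and integrating over the law of $(U_i)$ gives the claim --- and then prove the unweighted bound by telescoping $S_n = S_{n-1} + V_n$ through the scalar inequality $|a+b|^h \leq |a|^h + h\, b\, \operatorname{sgn}(a) |a|^{h-1} + 2 |b|^h$, with independence and $\expect{V_n} = 0$ annihilating the linear term. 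Your accounting of where each hypothesis enters is accurate: independence of the two sequences for the conditioning step, mutual independence and centering of the $V_i$ for the telescoping, and $h \leq 2$ for the boundedness of the degree-$(h-2)$ remainder in the scalar inequality. Your claim that $c_h = 2$ is the right uniform constant, with the supremum of the deficit approached as $h \downarrow 1$ and $t \uparrow 0$, also checks out: at $h = 1$ and $t \in (-1,0)$ the deficit equals $2 + 2t$, which tends to $2$.

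Three small points to nail down if you write this up. First, the scalar inequality is the entire content of the lemma and you only sketch its verification; the ``finite calculus problem'' should actually be executed (or cited), including the boundary cases $t = 0$ and $t = -1$ where $\phi_h$ is not smooth. Second, taking expectations of the cross term requires $\expect{|S_{n-1}|^{h-1}} < \infty$ before factorizing; this holds inductively once each $\expect{|V_i|^h} < \infty$ (and otherwise there is nothing to prove), but it deserves a sentence. Third, as you noticed, the statement as printed in the paper is sloppy for non-integer $h$: the inequality should carry absolute values, $\expect{\left| \sum_i U_i V_i \right|^h} \leq c \sum_i \expect{|U_i|^h} \expect{|V_i|^h}$, which is what your proof delivers and what the paper's application (with $U_i \geq 0$ but signed $V_i = W(v) - 1$) actually uses; and the extension from finite to infinite sums is a routine Fatou/limiting argument worth one line.
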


\begin{proof}[Proof of Lemma \ref{lemma:rate}]
We have the trivial identity
\begin{align*}
\Gamma^{(n+1)}(\root) - \Gamma^{(n)}(\root) &= \int \left( X(\xi_{m+1}) - X(\xi_m) \right) \, d \Gamma(\xi) \\
&= \int X(\xi_m) (W(\xi_{m+1}) - 1) \, d \Gamma(\xi) \\
&= \!\!\!\! \sum_{|v| = m+1} \!\!\!\! \Gamma(v) X(v_p) (W(v) - 1).
\end{align*}
The von Bahr-Esseen inequality applies to the latter sum, and therefore
\begin{align*}
\expect{\left| \Gamma_W^{(n+1)}(\root) - \Gamma_W^{(n)}(\root) \right|^h} &\leq c(h) \!\!\! \sum_{|v| = n+1} \!\!\! \Gamma(v)^h \expect{W^h}^{n} \expect{|W-1|^h} \\
& \leq 2 c(h) \expect{W^h}^{n+1} \!\!\! \sum_{|v| = n+1} \!\! \Gamma(v)^h.
\end{align*}
\end{proof}

The next lemma implies the $L^h$ convergence of the total mass of the cascade measure, and therefore that $\Gamma_W(\root) > 0$ almost surely.

\begin{corollary}\label{corollary:Lp_convergence}
Under Assumption \ref{assumption:exp_decay} we have that for all $h \in (1, h_W)$,
\begin{align*}
 \limsup_{n \to \infty} \frac{1}{n} \log \expect{|\Gamma_W(\root)-\Gamma_W^{(n)}(\root)|^h} \leq \lambda_{\Gamma}(h) + \log \E[W^h] < 0.
\end{align*}
\end{corollary}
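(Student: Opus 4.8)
The plan is to read off the conclusion directly from the second inequality in Lemma \ref{lemma:rate}, converting the $L^h$ bound on the tail sum into the claimed exponential rate. Recall from Definition \ref{remark:exp_decay} that $\alpha(h) = \lambda_{\Gamma}(h) + \log \E[W^h]$, and that for $h \in (1, h_W)$ we have $\alpha(h) < 0$ by the very definition of $h_W$. The strategy is to estimate the $m$-th summand in
\begin{align*}
\norm{\Gamma_W(\root) - \Gamma_W^{(n)}(\root)}{L^h} \leq C \sum_{m > n} \norm{W}{L^h}^m \left( \sum_{|v| = m} \Gamma(v)^h \right)^{1/h}
\end{align*}
geometrically, show the tail is dominated by its leading term, and then raise to the $h$-th power.

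Concretely, I would first fix a small $\epsilon > 0$ with $\alpha(h) + \epsilon < 0$, which is possible since $\alpha(h) < 0$. Because $\lambda_{\Gamma}(h)$ is defined as a $\limsup$, there is an $N$ such that $\sum_{|v|=m} \Gamma(v)^h \leq \exp\!\big(m(\lambda_{\Gamma}(h) + \epsilon)\big)$ for all $m \geq N$. Combining this with $\norm{W}{L^h}^m = \exp\!\big(\tfrac{m}{h}\log \E[W^h]\big)$, the $m$-th summand is bounded by $\exp\!\big(\tfrac{m}{h}(\alpha(h) + \epsilon)\big)$ for $m \geq N$. Summing the resulting geometric series over $m > n$ gives a bound of the form $C' \exp\!\big(\tfrac{n}{h}(\alpha(h) + \epsilon)\big)$, where $C'$ absorbs the geometric ratio $\big(1 - \exp(\tfrac{1}{h}(\alpha(h)+\epsilon))\big)^{-1}$. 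Raising to the $h$-th power yields
\begin{align*}
\expect{|\Gamma_W(\root) - \Gamma_W^{(n)}(\root)|^h} \leq (C C')^h \exp\!\big( n(\alpha(h) + \epsilon) \big),
\end{align*}
so that $\limsup_{n \to \infty} \tfrac{1}{n} \log \expect{|\Gamma_W(\root) - \Gamma_W^{(n)}(\root)|^h} \leq \alpha(h) + \epsilon$. Letting $\epsilon \downarrow 0$ gives the claimed bound $\lambda_{\Gamma}(h) + \log \E[W^h]$, which is strictly negative precisely because $h < h_W$.

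There is no serious obstacle here; the only point demanding care is that $\lambda_{\Gamma}$ is a $\limsup$ rather than a genuine limit, so the estimate $\sum_{|v|=m}\Gamma(v)^h \leq \exp(m(\lambda_{\Gamma}(h)+\epsilon))$ is only available for $m$ beyond some threshold $N$. Since we are after an upper bound on the tail, this one-sided control is exactly what is needed, and the finitely many terms below $N$ contribute only to the constant. The role of Assumption \ref{assumption:exp_decay} enters twice: the moment condition guarantees $\alpha(h) < \infty$ near $h = 1$ so that Lemma \ref{lemma:rate} and $h_W > 1$ are meaningful, and $W$-regularity forces $\alpha'(1+) < 0$, which is what makes the interval $(1, h_W)$ nonempty.
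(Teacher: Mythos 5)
Your proof is correct and follows essentially the same route as the paper's: both feed the exponential bound $\sum_{|v|=m}\Gamma(v)^h\,\E[W^h]^m \leq C e^{m(\alpha(h)+\gamma)}$ (valid for all large $m$ by the $\limsup$ definition and $\alpha(h)<0$ on $(1,h_W)$) into the second inequality of Lemma \ref{lemma:rate}, sum the geometric tail, and send the slack parameter to zero. You have merely written out the geometric-series step that the paper leaves implicit.
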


\begin{proof}
Since $\Gamma$ is $W$-regular, by Definition \ref{remark:exp_decay} for all $h \in (1, h_W)$, $\alpha(h) = \lambda_{\Gamma}(h) + \log \expect{W^h} < 0$. Hence for each $\gamma > 0$ such that $\alpha(h) + \gamma < 0$ there is a positive constant $C$ such that
\begin{align*}
\sum_{|v|=n} \Gamma(v)^h \expect{X(v)^h} \leq C e^{n(\alpha(h) + \gamma)}
\end{align*}
for all $n$. Applying the second statement of Lemma \ref{lemma:rate} completes the proof.
\end{proof}

We now extend Corollary \ref{corollary:Lp_convergence} to show that the exponential rate of convergence is uniform for all vertices on a fixed generation of the tree.

\begin{lemma} \label{lemma: cascade_Fan}
Under Assumption \ref{assumption:exp_decay} we have that for $h \in (1, h_W)$ and for $1 \leq i \leq n$
$$
\limsup_{n \to \infty} \frac{1}{n} \log \sum_{|v|=i} \expect{\left| \Gamma_W(v) - \Gamma_W^{(n)}(v) \right|^h}
\leq \lambda_{\Gamma}(h) + \log \expect{W^h} < 0.
$$
And moreover,
\begin{align*}
\limsup_{n \to \infty} \frac{1}{n} \log \sum_{i=1}^n \sum_{|v|=i} \expect{\left| \Gamma_W(v) - \Gamma_W^{(n)}(v) \right|^h} \leq \lambda_{\Gamma}(h) + \log \E[W^h] < 0.
\end{align*}
\end{lemma}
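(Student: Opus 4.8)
The plan is to reduce the whole statement to the root-level estimate of Lemma \ref{lemma:rate} by exploiting the self-similarity of the tree, so that after summing over an entire generation the per-vertex bounds reassemble into exactly the global bound already controlled in Corollary \ref{corollary:Lp_convergence}. First I would localize the telescoping computation in the proof of Lemma \ref{lemma:rate} to the subtree hanging below a fixed vertex $v$ with $|v|=i$. For $m \geq i$ the identical manipulation gives
\[
\Gamma_W^{(m+1)}(v) - \Gamma_W^{(m)}(v) = \sum_{\substack{u \in \mt(v) \\ |u| = m+1}} \Gamma(u)\, X(u_p)\,(W(u)-1),
\]
and applying the von Bahr--Esseen inequality (Lemma \ref{lemma:bahr_esseen}) to this sum, with the factors $U_u = \Gamma(u)X(u_p)$ and the mean-zero, mutually independent factors $V_u = W(u)-1$, yields
\[
\expect{\left| \Gamma_W^{(m+1)}(v) - \Gamma_W^{(m)}(v) \right|^h} \leq C\, \expect{W^h}^{m+1} \sum_{\substack{u \in \mt(v) \\ |u| = m+1}} \Gamma(u)^h .
\]

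The key observation is the next step. For every $m \geq i$ the subtrees $\{\mt(v) : |v|=i\}$ partition generation $m+1$, so summing the previous display over $|v|=i$ collapses the inner sums into the full generation sum and reproduces the root-level increment bound \emph{exactly and uniformly in} $i$:
\[
\sum_{|v|=i} \expect{\left| \Gamma_W^{(m+1)}(v) - \Gamma_W^{(m)}(v) \right|^h} \leq C\, \expect{W^h}^{m+1} \sum_{|u|=m+1} \Gamma(u)^h .
\]

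Next I would pass from increments back to the tail difference $\Gamma_W(v) - \Gamma_W^{(n)}(v) = \sum_{m \geq n}\bigl(\Gamma_W^{(m+1)}(v) - \Gamma_W^{(m)}(v)\bigr)$. One cannot simply add the $h$-th moments here; instead I would apply Minkowski's inequality in the mixed norm $\ell^h(\{v : |v|=i\}) \otimes L^h(\Omega)$, i.e. the triangle inequality for $\bigl(\sum_{|v|=i}\norm{\,\cdot\,}{L^h}^h\bigr)^{1/h}$, to move the sum over $m$ to the outside. Combined with the preceding display and a reindexing $m \mapsto m+1$, this gives
\[
\left( \sum_{|v|=i} \expect{\left| \Gamma_W(v) - \Gamma_W^{(n)}(v) \right|^h} \right)^{1/h} \leq C \sum_{m > n} \norm{W}{L^h}^{m} \left( \sum_{|u|=m} \Gamma(u)^h \right)^{1/h},
\]
whose right-hand side is independent of $i$ and coincides, up to the constant, with the root estimate of Lemma \ref{lemma:rate}. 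The first assertion then follows verbatim from the argument of Corollary \ref{corollary:Lp_convergence}: $W$-regularity gives $\alpha(h)=\lambda_{\Gamma}(h)+\log\expect{W^h}<0$ for $h\in(1,h_W)$ (Definition \ref{remark:exp_decay}), the tail sum is geometrically dominated by its leading term, and raising to the power $h$ produces the rate $\lambda_{\Gamma}(h)+\log\expect{W^h}$, uniformly in $i$.

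Finally, for the double sum I would simply sum the $i$-uniform bound over $i=1,\dots,n$. Since the per-generation bound does not depend on $i$, this introduces only a prefactor of $n$, and $\frac1n\log n\to 0$, so the exponential rate is unaffected and the second assertion follows with the same constant. The step I expect to require the most care is the Minkowski interchange in the mixed $\ell^h$--$L^h$ norm together with the partition-of-generation bookkeeping, since it is precisely this that makes the bound independent of $i$ and hence renders the extra factor $n$ harmless. A minor point still to verify is the independence used implicitly in von Bahr--Esseen, namely that $X(u_p)$ depends only on weights strictly above generation $m+1$ and is therefore independent of the increments $\{W(u)-1\}_{|u|=m+1}$.
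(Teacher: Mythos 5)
Your proof is correct, and it reaches the same estimate as the paper by a mildly but genuinely different route. The paper does not re-run the telescoping argument below each vertex; instead it invokes the subtree identities \eqref{eq:level_n_cascade} and \eqref{eq:subtree_mass} to factor $\expect{|\Gamma_W(v)-\Gamma_W^{(n)}(v)|^h}=\expect{X(v)^h}\,\expect{|\mathcal{C}(\Gamma_{|v};W_{|v})(v)-\mathcal{C}(\Gamma_{|v};W_{|v})^{(n-i)}(v)|^h}$, applies Lemma \ref{lemma:rate} to the subtree cascade (so the sum over levels $m$ is performed first, at fixed $v$), and only then sums over $|v|=i$, using the superadditivity of the $\ell^{1/h}$ quasi-norm for the exponent $1/h\leq 1$ to collapse $\sum_{|v|=i}\mathbf{a_{n-i}}(v)$ into $\mathbf{a_n}(\root)$. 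You interchange the two summations: you sum the von Bahr--Esseen increment bound over the generation first, where the partition of generation $m+1$ by the subtrees $\{\mt(v):|v|=i\}$ gives an exact, $i$-uniform recombination, and then sum over $m$ via the triangle inequality in the mixed norm $\bigl(\sum_{|v|=i}\norm{\cdot}{L^h}^h\bigr)^{1/h}$, which is just the $L^h$ norm on the product of counting measure with $\P$ and hence a genuine norm for $h\geq 1$. What your route buys is the avoidance of both the subtree-cascade factorization and the slightly delicate reverse-Minkowski step for $\ell^{1/h}$; what the paper's route buys is that Lemma \ref{lemma:rate} is reused as a black box rather than re-derived locally. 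The two points you flag as needing care are both fine: the independence of $X(u_p)$ from $\{W(u)-1\}_{|u|=m+1}$ is exactly the hypothesis already used in Lemma \ref{lemma:rate}, and the infinite-sum Minkowski interchange is justified by monotone convergence on the partial sums. The treatment of the double sum over $i$, costing only a factor of $n$, matches the paper's.
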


\begin{proof}
From \eqref{eq:level_n_cascade} we have, for $|v|=i \leq n$,
\begin{align*}
\Gamma_W^{(n)}(v) = X(v) \mathcal{C} \left( \Gamma_{|v}; W_{|v} \right)^{(n-i)}(v).
\end{align*}
Combining this with \eqref{eq:subtree_mass} and using that $X(v)$ is independent of the cascade on $\mt(v)$ gives
\begin{align*}
\expect{|\Gamma_W(v) - \Gamma_W^{(n)}(v)|^h} = \expect{X(v)^h} \expect{ \left| \mathcal{C}(\Gamma_{|v}; W_{|v})(v) - \mathcal{C}(\Gamma_{|v}; W_{|v})^{(n-i)}(v) \right|^h}.
\end{align*}
Applying Lemma \ref{lemma:rate} to the second factor and combining with the first factor gives
\begin{align}\label{eq:middle_fan_bound}
\expect{|\Gamma_W(v) - \Gamma_W^{(n)}(v)|^h} \leq C \left( \sum_{k > n-i} \left( \sum_{\substack{w \in \mt(v) \\ |w|=|v|+k}} \Gamma(w)^h \expect{W^h}^{|v|+k} \right)^{1/h} \right)^h,
\end{align}
where $C$ depends only on $h$. Now define $a_k(v)$ by
\begin{align*}
a_k(v) = \!\!\!\! \sum_{\substack{w \in \mt(v) \\ |w| = |v|+k}} \!\!\!\! \Gamma(w)^h \expect{W^h}^{|v|+k}
\end{align*}
and $\mathbf{a_n}(v) = (a_{n+1}(v), a_{n+2}(v), a_{n+3}(v), \ldots)$. Then equation \eqref{eq:middle_fan_bound} is equivalent to
\begin{align*}
\expect{|\Gamma_W(v) - \Gamma_W^{(n)}(v)|^h} \leq C || \mathbf{a_{n-i}}(v) ||_{\ell^{1/h}},
\end{align*}
with $\ell^{1/h}$ denoting the usual sequence space. Summing over $|v|=i$ gives
\begin{align}\label{second_fan_bound}
\sum_{|v|=i} \expect{|\Gamma_W(v) - \Gamma_W^{(n)}(v)|^h} \leq C \sum_{|v|=i} || \mathbf{a_{n-i}}(v) ||_{\ell^{1/h}} \leq C \left| \left| \sum_{|v|=i} \mathbf{a_{n-i}}(v) \right| \right| _{\ell^{1/h}} \!\!\!\!\!\!\!\! .
\end{align}
The last inequality is the Minkowski inequality for $\ell^{1/h}$ (recall $h \geq 1$). By definition of $\mathbf{a}$ we have
\begin{align}\label{eq:third_fan_bound}
\sum_{|v|=i} \mathbf{a_{n-i}}(v) = (a_{n+1}(\root), a_{n+2}(\root), a_{n+3}(\root), \ldots) = \mathbf{a_{n}}(\root).
\end{align}
By definition of $\alpha(h)$ we have, for each $\gamma > 0$,
\begin{align*}
a_n(\root) = \sum_{|v|=n} \Gamma(v)^h \expect{W^h}^n \leq e^{n(\alpha(h) + \gamma)}
\end{align*}
for $n$ sufficiently large. Under Assumption \ref{assumption:exp_decay} and using Remark \ref{remark:exp_decay} we have $\alpha(h) < 0$ for $h \in (1, h_W)$. Choosing $\gamma$ such that $\alpha(h) + \gamma < 0$, this gives
\begin{align*}
|| \mathbf{a_n}(\root) ||_{\ell^{1/h}} \leq C e^{n (\alpha(h) + \gamma)}
\end{align*}
for $n$ sufficiently large. Combining this with \eqref{second_fan_bound} and \eqref{eq:third_fan_bound} and sending $\gamma$ to zero gives the first statement of the lemma. For the second part, simply observe that by \eqref{second_fan_bound} we have
\begin{align*}
\sum_{i=1}^n \sum_{|v|=i} \expect{|\Gamma_W(v) - \Gamma_W^{(n)}(v)|^h} \leq C n || \mathbf{a_n}(\root) ||_{\ell^{1/h}}.
\end{align*}
\end{proof}

This easily implies a uniform control of some moment of the cascade measure over all the vertices $v$ in the tree.

\begin{corollary} \label{cascade_Moment}
Under Assumption \ref{assumption:exp_decay} we have that for all $h \in (1, h_W)$
\begin{flalign*}
\limsup_{n \to \infty} \frac{1}{n} \log \sum_{|v|=n} \expect{\left| \Gamma_W(v) \right|^h}
& \leq \lambda_{\Gamma}(h) + \log \expect{W^h} < 0
\end{flalign*}

\end{corollary}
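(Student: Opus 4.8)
The plan is to deduce the corollary from Lemma~\ref{lemma: cascade_Fan} by a triangle inequality that splits $\Gamma_W(v)$ into the finite level cascade $\Gamma_W^{(n)}(v)$ and the tail $\Gamma_W(v) - \Gamma_W^{(n)}(v)$. Since $h \geq 1$, the convexity bound $(a+b)^h \leq 2^{h-1}(a^h+b^h)$ gives, for each vertex $v$,
\begin{align*}
\expect{|\Gamma_W(v)|^h} \leq 2^{h-1}\left( \expect{|\Gamma_W(v) - \Gamma_W^{(n)}(v)|^h} + \expect{|\Gamma_W^{(n)}(v)|^h} \right),
\end{align*}
so it suffices to show that each of the two terms, after summing over $|v| = n$, decays at the exponential rate $\alpha(h) = \lambda_{\Gamma}(h) + \log \expect{W^h}$.

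For the finite level term I would use that, when $|v| = n$, the level $n$ cascade is explicit: along every ray through $v$ the weight $X(\xi_n)$ equals $X(v)$, so $\Gamma_W^{(n)}(v) = X(v)\Gamma(v)$. Independence of the weights along the path to $v$ then gives $\expect{|\Gamma_W^{(n)}(v)|^h} = \Gamma(v)^h \expect{W^h}^n$, and summing recovers exactly
\begin{align*}
\sum_{|v|=n} \expect{|\Gamma_W^{(n)}(v)|^h} = \sum_{|v|=n} \Gamma(v)^h \expect{W^h}^n = a_n(\root).
\end{align*}
By the definition of $\alpha(h)$ this is at most $e^{n(\alpha(h) + \gamma)}$ for any fixed $\gamma > 0$ and all large $n$.

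For the tail term I would invoke the estimate \eqref{second_fan_bound} established inside the proof of Lemma~\ref{lemma: cascade_Fan}, namely $\sum_{|v|=i} \expect{|\Gamma_W(v) - \Gamma_W^{(n)}(v)|^h} \leq C \, || \mathbf{a_n}(\root) ||_{\ell^{1/h}}$. The crucial observation is that this right-hand side does not depend on $i$, so the bound holds in particular in the diagonal case $i = n$; the telescoping identity \eqref{eq:third_fan_bound} remains valid there, since $\sum_{|v|=n} a_k(v) = a_{n+k}(\root)$. Exactly as in the proof of that lemma, $|| \mathbf{a_n}(\root) ||_{\ell^{1/h}} \leq C e^{n(\alpha(h)+\gamma)}$ for large $n$. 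Combining the two contributions and summing over $|v| = n$ yields $\sum_{|v|=n}\expect{|\Gamma_W(v)|^h} \leq C' e^{n(\alpha(h)+\gamma)}$; taking $\frac1n \log$, letting $n \to \infty$, and then sending $\gamma \downarrow 0$ produces the stated bound by $\lambda_{\Gamma}(h) + \log \expect{W^h}$, which is negative for $h \in (1, h_W)$ by Definition~\ref{remark:exp_decay}.

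The only genuinely delicate point is the use of the diagonal case $i = n$: the first display of Lemma~\ref{lemma: cascade_Fan} is phrased with $i$ fixed as $n \to \infty$, so rather than quote it verbatim I would reuse its internal estimate \eqref{second_fan_bound}, whose bound is uniform in $i \leq n$. Everything else is routine bookkeeping built on the already-established exponential decay of $a_n(\root)$.
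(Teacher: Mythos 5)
Your proof is correct and follows essentially the same route as the paper: the same splitting of $\Gamma_W(v)$ into $\Gamma_W^{(n)}(v) = X(v)\Gamma(v)$ plus the tail, with the finite-level term summing to $a_n(\root)$ and the tail controlled by Lemma~\ref{lemma: cascade_Fan}. Your extra care about the diagonal case $i=n$ is well placed and is exactly what the paper's terse appeal to that lemma implicitly relies on, since the internal bound \eqref{second_fan_bound}--\eqref{eq:third_fan_bound} is uniform in $i \leq n$.
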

\begin{proof}
From the inequality $|a+b|^h \leq 2^h(|a|^h + |b|^h)$ we have
\begin{align*}
\expect{\Gamma_W(v)^h} \leq 2^h \left( \expect{\left| \Gamma_W(v) - X(v) \Gamma(v) \right|^h } + \Gamma(v)^h \expect{ X(v)^h } \right).
\end{align*}
Summing over $|v|=n$ and taking logarithms we get
\begin{align*}
\limsup_{n \to \infty} \frac{1}{n} \log \vsum \expect{\Gamma_W(v)^h}
& \leq \limsup_{n \to \infty} \frac{1}{n} \log \vsum \left( \expect{\left| \Gamma_W(v) - X(v) \Gamma(v) \right|^h } + \Gamma(v)^h \expect{ X(v)^h } \right) \\
& \leq \lambda_{\Gamma}(h) + \log \expect{W^h}.
\end{align*}
The last inequality is a consequence of the fact that the two terms in the line above both have the same exponential rate of decay, which is itself a consequence of Lemma \ref{lemma: cascade_Fan}.
\end{proof}

\section{A Markovian Random Cascade Process \label{sec:process}}

\subsection{Dynamic Random Weights \label{sec:weights}}

The main idea of this paper is to replace the random weights $W$ on the vertices of the tree with random weight processes $t \mapsto W_t$ that evolve in time. As usual we require a moment of the cascade variable $W_t$ as well as regularity of the measure $\Gamma$. To this we also add an independence condition. Throughout we assume the following properties of the weight processes and the initial measure $\Gamma$.

\begin{assumption}\label{assumption:basic} 
The weight process $t \mapsto W_t$ is defined in an interval $[0,T]$ with $T > 0$, and
\begin{itemize}
\item there is a $\delta>0$ such that $\expect{W_T^{1+\delta}} < \infty$,
\item the measure $\Gamma$ is $W_T$-regular.
\item $W_0 = 1$,
\item $W_t > 0$ and $\expect{W_t} = 1$ for each $t \geq 0$,
\item $t \mapsto \log W_t$ has independent increments.
\end{itemize}
\end{assumption}

\begin{remark}\label{remark:increasing_moments}
Observe that for $p>1$
\begin{align*}
\expect{W_T^{p} } = \expect{W_t^p} \E\left[ \frac{ W_{T}}{W_t} \right]^p \geq \expect{W_t^p},
\end{align*}
and so, by Remark \ref{remark:inherit_regular}, the moment and regularity assumptions are inherited for $W_t$ with $t < T$ .
\end{remark}

Such processes are easy to construct, for example exponentials of Brownian motion or exponentials of Levy processes (both properly normalized so that $\expect{W_t} = 1$). Note, however, that in both of these examples the increments of $\log W_t$ are stationary, but that our results do \textit{not} require this. For $s, t \geq 0$ we define
\begin{align*}
W_{t,t+s} := \frac{W_{t+s}}{W_t}.
\end{align*}
The independent increments assumption gives that $W_{t,t+s}$ is independent of $W_t$. Moreover the process $t \mapsto W_t$ is a martingale, that is
\begin{align*}
\expect{W_t | \sigma(W_r : r \leq s)} = W_s.
\end{align*}

Now to each vertex $v \in \mt$ attach a copy $W_t(v)$ of this process such that the collection $\{ W_t(v) \}_{v \in \mt}$ is independent. The main idea of this paper is to use the cascading procedure of the last section to construct a random cascade measure $\Gamma_{W_t}$ at each time $t \geq 0$, and then show that the resulting process $t \mapsto \Gamma_{W_t}$ is Markovian. This is carried out in Section \ref{sec:markov_property}, and the rest of the paper studies properties of the process. To simplify notation we write
\begin{align*}
\Gamma_t := \Gamma_{W_t} = \mathcal{C}(\Gamma; W_t).
\end{align*}
Observe that $\Gamma_0 = \Gamma$. We also define functions $X_{t, t+s} : \mt \to \R_+$ by
\begin{align*}
X_{t,t+s}(\xi_n) = \prod_{i=1}^n W_{t,t+s}(\xi_i),
\end{align*}
and the filtrations
\begin{align*}
\mathcal{W}_t = \sigma \left( W_s(v) : v \in \mt, s \leq t \right), \quad
\mathcal{F}_t = \sigma \left( \Gamma_s(v) : v \in \mt, s \leq t \right).
\end{align*}
In general $\mathcal{F}_t \subset \mathcal{W}_t$ and the inclusion in strict, since by knowing the weights one can construct the measure, but knowing the measure does not generally give full information on the weights.

To simplify notation, we will often drop references to the weights $W_t$ or the initial measure $\Gamma$.

\begin{definition} \label{h_time}
For $t \in [0,T]$, let
\[
\alpha_t(h) := \limsup_{n \to \infty} \frac{1}{n} \log \sum_{|v|=n} \Gamma(v)^h \expect{X_t(v)^h} = \lambda_{\Gamma}(h) + \log \expect{W_t^h}.
\]
Furthermore let
\[
h_t := \sup \{ h \geq 1 \text{ : such that } \alpha_t(h) < 0 \}.
\]
\end{definition}
\begin{remark} \label{remark:h_time}
Under Assumption \ref{assumption:basic}, $h_t >1$ for $t \in [0,T]$. Further note that $h_t$ is decreasing with $t$.
\end{remark}

\subsection{Construction and Basic Properties of the Process}

Before proving that the $t \mapsto \Gamma_t$ process is Markov we first deal with a technical issue. Above we said that we construct the process $t \mapsto \Gamma_t$ by applying the random cascading procedure of Section 2 at each fixed time $t$. However the existence of the random cascade measure is only an almost sure statement, and the event that it does not exist could conceivably depend on $t$. Since we are now working in continuous time it is possible that there is an exceptional set of times for which the cascade does \textit{not} exist, which would leave our cascade process ill-defined. We begin by showing that this is not the case.

To this end first note that for each $n > 0$ the finite level measure processes $t \mapsto \Gamma_t^{(n)}$ are well-defined, and in fact are martingales in $t$ with respect to the filtration $\mathcal{W}_t$. Indeed
\begin{align*}
\expect{ d\Gamma_{t+s}^{(n)}(\xi) | \mathcal{W}_t } = X_t(\xi_n) d \Gamma (\xi) \expect{X_{t,t+s}(\xi_n)} = d \Gamma_{t}^{(n)}(\xi),
\end{align*}
by the fact that $X_{t,t+s}$ is independent of $\mathcal{W}_t$ and has mean one. We will show that this martingale property, together with the exponential $L^p$ convergence of the finite level measures, gives that the $\Gamma_t$ process is well-defined. Moreover, the martingale property of the finite level measures is inherited by the limit.

\begin{theorem} \label{theorem:existence}
Under Assumptions \ref{assumption:basic}, the event
\begin{align*}
\left \{ \lim_{n \to \infty} \Gamma_t^{(n)}(v) \textrm{ exists for all } v \in \mt, t \leq T \right \}
\end{align*}
has probability one. Moreover,
\begin{enumerate}[(i)]
\item for each $v \in \mt$ the process $t \mapsto \Gamma_t(v)$ is a martingale with respect to $\mathcal{W}_t$, and hence $\mathcal{F}_t$, and,
\item if the weight process $t \mapsto W_t$ is continuous then so  is $\Gamma_t(v)$ for each $v \in \mt$.
\end{enumerate}
\end{theorem}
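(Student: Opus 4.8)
The plan is to handle the two limiting procedures at once: the generation limit $n\to\infty$ and the continuum of times $t\in[0,T]$. The idea is to combine the exponential $L^h$ decay of the finite-level increments from Lemma \ref{lemma:rate} (in the $n$-direction) with Doob's maximal inequality (in the $t$-direction). Throughout I fix an exponent $h\in(1,h_T)$, which is available since $h_T>1$ under Assumption \ref{assumption:basic} (Remark \ref{remark:h_time}), and a countable dense set $D\subset[0,T]$ with $T\in D$, say the dyadics. As is standard for independent increments processes, I take the version of $t\mapsto W_t$ with c\`{a}dl\`{a}g paths, so that each finite-level process $t\mapsto\Gamma^{(n)}_t(v)=\sum_{w\in\mt(v),|w|=n}\Gamma(w)X_t(w)$ is c\`{a}dl\`{a}g and its supremum over $[0,T]$ coincides with its supremum over $D$.

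First I would fix a vertex $v$ and set $D^{(n)}_t(v):=\Gamma^{(n+1)}_t(v)-\Gamma^{(n)}_t(v)$. Since each $t\mapsto\Gamma^{(n)}_t(v)$ is a $\mathcal{W}_t$-martingale (the computation carried out just before the theorem), so is $t\mapsto D^{(n)}_t(v)$, and it is closed by its value at $T$. Doob's $L^h$ maximal inequality therefore gives
\begin{align*}
\expect{\sup_{t\le T}|D^{(n)}_t(v)|^h}\le\left(\frac{h}{h-1}\right)^h\expect{|D^{(n)}_T(v)|^h}.
\end{align*}
The right-hand side is exactly the quantity estimated in the proof of Lemma \ref{lemma:rate}, applied at time $T$: the von Bahr--Esseen inequality (Lemma \ref{lemma:bahr_esseen}) yields $\expect{|D^{(n)}_T(v)|^h}\le C\,\expect{W_T^h}^{n+1}\sum_{|w|=n+1}\Gamma(w)^h\le Ce^{n(\alpha_T(h)+\gamma)}$ for any $\gamma>0$ and $n$ large, by Definition \ref{h_time}. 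Choosing $\gamma$ with $\alpha_T(h)+\gamma<0$ makes this summable in $n$, so by Minkowski's inequality $\sum_n\sup_{t\le T}|D^{(n)}_t(v)|\in L^h$ and in particular is finite almost surely. Hence on a probability-one event the partial sums $\Gamma^{(n)}_t(v)=\Gamma^{(0)}_t(v)+\sum_{k<n}D^{(k)}_t(v)$ converge uniformly in $t\in[0,T]$ to a limit $\Gamma_t(v)$. Intersecting these events over the countable vertex set $\mt$ proves the existence statement.

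The remaining conclusions then follow cheaply. For (i), the exponential $L^h$ convergence $\Gamma^{(n)}_t(v)\to\Gamma_t(v)$ (Corollary \ref{corollary:Lp_convergence}) lets me pass the martingale identity to the limit: since conditional expectation is an $L^h$-contraction, $\expect{\Gamma_{t+s}(v)\mid\mathcal{W}_t}=\lim_n\expect{\Gamma^{(n)}_{t+s}(v)\mid\mathcal{W}_t}=\lim_n\Gamma^{(n)}_t(v)=\Gamma_t(v)$, with $\Gamma_t(v)\in L^h$ by Corollary \ref{cascade_Moment}; the $\mathcal{F}_t$-martingale property then follows from $\mathcal{F}_t\subset\mathcal{W}_t$ and the tower property. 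For (ii), if $t\mapsto W_t$ is continuous then each $t\mapsto\Gamma^{(n)}_t(v)$ is a finite sum of products of continuous functions, hence continuous, and the uniform convergence established above makes the limit $t\mapsto\Gamma_t(v)$ continuous as well.

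The main obstacle is the existence statement, and specifically the interchange of the continuum of times with the generation limit. The naive route — invoking almost-sure convergence at each fixed $t$ and taking a union over $t$ — fails because it produces an uncountable union of null sets. The Doob-in-$t$ step is precisely what upgrades the fixed-time $L^h$ rate into a genuinely uniform-in-$t$ statement holding on a single good event. The only delicate point there is path regularity: Doob's inequality with the true supremum over $[0,T]$ needs a c\`{a}dl\`{a}g version, so that the supremum reduces to one over the countable set $D$ and remains measurable. Once uniform convergence in $t$ is secured, parts (i) and (ii) are routine limit interchanges.
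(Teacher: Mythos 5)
Your proof is correct and follows essentially the same route as the paper's: Doob's maximal $L^h$ inequality in the time direction applied to the increments $\Gamma_t^{(n+1)}(v)-\Gamma_t^{(n)}(v)$, combined with the exponential rate from Lemma \ref{lemma:rate} and the negativity of $\alpha_T(h)$, to upgrade fixed-time convergence to uniform-in-$t$ convergence on a single full-probability event, after which (i) and (ii) follow by passing to the limit. The only (cosmetic) differences are that you conclude via Minkowski and almost-sure summability of the suprema where the paper uses Chebyshev with a geometric threshold plus Borel--Cantelli, and you are slightly more explicit about the c\`{a}dl\`{a}g/measurability point needed for Doob's inequality.
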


\begin{proof}
We concentrate first on the case $v = \root$. Fix $h \in (1, h_T)$. Since the difference $\Gamma_t^{(n+1)}(\root) - \Gamma_t^{(n)}(\root)$ is a martingale in $t$ (with respect to the filtration $\mathcal{W}_t$), we may apply Doob's maximal $L^h$ inequality to get that
\begin{flalign*}
P \left( \sup_{0 \leq t \leq T} |\Gamma_t^{(n+1)}(\root) - \Gamma_t^{(n)}(\root)| > \beta^n \right)
& \leq \beta^{-nh} \expect{\sup_{0 \leq t \leq T} |\Gamma_t^{(n+1)}(\root) - \Gamma_t^{(n)}(\root)|^h} \\
& \leq \beta^{-nh} \left( \frac{h}{h-1} \right)^h \expect{|\Gamma_T^{(n+1)}(\root) - \Gamma_T^{(n)}(\root)|^h} \\
& \leq C\beta^{-nh} \expect{W_T^h}^{n} \sum_{|v|=n} \Gamma(v)^h,
\end{flalign*}
with the last inequality coming from Lemma \ref{lemma:rate}. Therefore by taking logarithms we get
$$
\limsup_{n \to \infty} \frac{1}{n}\ \log P \left( \sup_{0 \leq t \leq T} |\Gamma_t^{(n+1)}(\root) - \Gamma_t^{(n)}(\root)| > \beta^n \right)
\leq - h \log \beta + \alpha_T(h).
$$
As $\alpha_T(h) < 0$, we can pick $\beta < 1$ so that the right hand side is less than zero.  Now Apply Borel-Cantelli to conclude that $\Gamma_t^{(n)}(\root)$ is a Cauchy sequence in $n$, with a rate of convergence that is uniform in $t$. This proves the first part of the theorem.

To prove the martingale property, simply note that by Corollary \ref{corollary:Lp_convergence} there is an $h > 1$ such that $\Gamma_t^{(n)}(\root)$ converges to $\Gamma_t(\root)$ in $L^h$, and hence in $L^1$. Thus for $A \in \mathcal{W}_s$
\begin{align*}
\expect{( \Gamma_{t+s}(\root) - \Gamma_t(\root) ) \textbf{\textrm{1}}_A } = \lim_{n \to \infty} \expect{(\Gamma_{t+s}^{(n)}(\root) - \Gamma_t^{(n)}(\root)) \textbf{\textrm{1}}_A } = 0,
\end{align*}
with the last equality using the martingale property of the finite level measure process. This proves that $\Gamma_t$ is a martingale with respect to $\mathcal{W}_t$, but since $\mathcal{F}_t \subset \mathcal{W}_t$ and $\Gamma_t$ is $\mathcal{F}_t$-measurable, it is automatically a martingale with respect to $\mathcal{F}_t$ also.

For the continuity statement observe that if $W_t$ is continuous then so is $\Gamma_t^{(n)}(\root)$, since it is a finite product and sum of continuous functions. The Borel-Cantelli argument above gives continuity of $\Gamma_t(\root)$ by completeness of $C([0,T])$ under the sup norm.

Finally, if $v \neq \root$ then the proofs above are easily extended by noting that $W_T$-regularity is inherited by the submeasures $\Gamma_{|v}$ (see Remark \ref{remark:submeasure_regularity}). The simple relation $\Gamma_t(v) = X_t(v) \mathcal{C}(\Gamma_{|v}, W_t)(v)$ finishes the argument, and since there are only countably many vertices on the tree the proof is completed.
\end{proof}

\subsection{The Markov Property \label{sec:markov_property}}

In this section we show that the $\Gamma_t$ process has the Markov property. For a given weight process $W_t$ on $[0,T]$, let $\mathcal{M}_T$ be the space of measures $\Gamma$ that satisfy Assumption \ref{assumption:basic}. The Markov property can be formally stated by saying that for any bounded, measurable $F : \mathcal{M}_T \to \R$ we have
\begin{align*}
\condexpect{F(\Gamma_{t+s})}{\mathcal{F}_t} =  \condexpect{F(\Gamma_{t+s})}{\Gamma_t},
\end{align*}
for $s,t \geq 0$ such that $s+t \leq T$. By a density argument it is sufficient to consider the functions of the form $F_v(\Gamma) = \Gamma(v)$ for $v \in \mt$. For these functions we will actually prove the stronger statement
\begin{align*}
\condexpect{F_v(\Gamma_{t+s})}{\mathcal{W}_t} = \condexpect{F_v(\Gamma_{t+s})}{\Gamma_t},
\end{align*}
the difference between the two being that $\mathcal{W}_t$ is a coarser $\sigma$-algebra than $\mathcal{F}_t$. Since the weight processes $s \mapsto W_{t,t+s}$ are independent of $\mathcal{W}_t$, it is sufficient to prove the following:

\begin{theorem} \label{theorem:markov}
Under Assumptions \ref{assumption:basic}, for fixed $s, t \geq 0$ such that $t+s \leq T$, we have with probability one that
\begin{align*}
\Gamma_{t+s} = \mathcal{C} \left( \Gamma_t; W_{t, t+s} \right).
\end{align*}
\end{theorem}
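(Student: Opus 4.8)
The starting point is an exact identity at the finite level. For every vertex $v$, every $n$, and every $w \in \mt(v)$ with $|w|=n$, the multiplicative relation $W_{t+s}(w_i) = W_t(w_i) W_{t,t+s}(w_i)$ gives $X_{t+s}(w) = X_t(w) X_{t,t+s}(w)$, while $\Gamma_t^{(n)}(w) = X_t(w)\Gamma(w)$ for $|w|=n$. Hence
\begin{align*}
\Gamma_{t+s}^{(n)}(v) = \!\!\!\! \sum_{\substack{w \in \mt(v) \\ |w|=n}} \!\!\!\! X_{t,t+s}(w)\, \Gamma_t^{(n)}(w), \qquad \mathcal{C}(\Gamma_t; W_{t,t+s})^{(n)}(v) = \!\!\!\! \sum_{\substack{w \in \mt(v) \\ |w|=n}} \!\!\!\! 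X_{t,t+s}(w)\, \Gamma_t(w),
\end{align*}
the second being the level-$n$ cascade of the \emph{full} measure $\Gamma_t$ by the increment weights $W_{t,t+s}$. The plan is to fix $v$ (it suffices to treat one vertex and then take a union over the countably many vertices), fix some $h \in (1,2]$ with $h < h_{t+s}$ (possible since $h_{t+s} > 1$ by Remark \ref{remark:h_time}), and show that $\Gamma_{t+s}(v)$ and $\mathcal{C}(\Gamma_t; W_{t,t+s})(v)$ agree in $L^h$. Inserting the two finite-level objects and applying the triangle inequality splits the difference into three pieces: (i) $\norm{\mathcal{C}(\Gamma_t; W_{t,t+s})(v) - \mathcal{C}(\Gamma_t; W_{t,t+s})^{(n)}(v)}{L^h}$, (ii) $\norm{\mathcal{C}(\Gamma_t; W_{t,t+s})^{(n)}(v) - \Gamma_{t+s}^{(n)}(v)}{L^h}$, and (iii) $\norm{\Gamma_{t+s}^{(n)}(v) - \Gamma_{t+s}(v)}{L^h}$.

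Piece (iii) tends to zero by Corollary \ref{corollary:Lp_convergence} applied to the cascade by $W_{t+s}$, which is legitimate since $t+s \leq T$ forces $\Gamma$ to be $W_{t+s}$-regular with a finite $(1+\delta)$ moment (Remark \ref{remark:increasing_moments}) and $h < h_{t+s}$. Piece (i) is the convergence of the cascade of $\Gamma_t$: conditioning on $\mathcal{W}_t$ makes $\Gamma_t$ deterministic and independent of the increment weights $W_{t,t+s}$ (independent increments), so Lemma \ref{lemma:rate} applies conditionally to each per-level increment; taking expectations and using Corollary \ref{cascade_Moment} to control $\sum_{|w|=n} \expect{\Gamma_t(w)^h}$ shows these increments decay exponentially at rate $\log \expect{W_{t,t+s}^h} + \lambda_\Gamma(h) + \log\expect{W_t^h}$. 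The decisive simplification, used throughout, is the independent-increments identity $\expect{W_{t+s}^h} = \expect{W_t^h}\,\expect{W_{t,t+s}^h}$, which collapses this rate to $\alpha_{t+s}(h) < 0$; hence (i) vanishes and, in particular, $\mathcal{C}(\Gamma_t; W_{t,t+s})(v)$ is well-defined.

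Piece (ii) is the main obstacle. Its summand is $\sum_{w \in \mt(v),\, |w|=n} X_{t,t+s}(w)\big(\Gamma_t(w) - \Gamma_t^{(n)}(w)\big)$; conditioning on $\mathcal{W}_t$ fixes the differences $D_n(w) := \Gamma_t(w) - \Gamma_t^{(n)}(w)$, but the factors $X_{t,t+s}(w)$ share weights along common ancestors, so von Bahr--Esseen does not apply directly. I would first split off the conditional mean, whose $L^h$ norm equals $\norm{\Gamma_t(v) - \Gamma_t^{(n)}(v)}{L^h} \to 0$ by Corollary \ref{corollary:Lp_convergence}, and then telescope the fluctuation via $X_{t,t+s}(w) - 1 = \sum_{j} X_{t,t+s}(w_{j-1})(W_{t,t+s}(w_j)-1)$, regrouping by the generation-$j$ ancestor to express the fluctuation as a sum over levels $j = 1,\dots,n$ of martingale differences in the filtration generated by $W_{t,t+s}$ level by level. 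Applying von Bahr--Esseen one level at a time (conditionally on $\mathcal{W}_t$ and on the weights above level $j$, where the factors $W_{t,t+s}(u) - 1$ are independent and mean zero) and then its martingale-difference form in $j$ yields
\begin{align*}
\expect{\big| \mathcal{C}(\Gamma_t; W_{t,t+s})^{(n)}(v) - \Gamma_{t+s}^{(n)}(v)\big|^h} \leq C \sum_{j=1}^n \expect{W_{t,t+s}^h}^{\,j} \sum_{\substack{u \in \mt(v) \\ |u|=j}} \expect{\big|\Gamma_t(u) - \Gamma_t^{(n)}(u)\big|^h}.
\end{align*}
The inner sum is bounded, uniformly in $j \leq n$, by Lemma \ref{lemma: cascade_Fan} at exponential rate $\alpha_t(h)$, and the geometric factor contributes $\log\expect{W_{t,t+s}^h}$; by the same multiplicative identity the total rate is again $\alpha_{t+s}(h) < 0$, so piece (ii) vanishes. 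Combining the three pieces gives $\norm{\Gamma_{t+s}(v) - \mathcal{C}(\Gamma_t; W_{t,t+s})(v)}{L^h} = 0$, hence equality almost surely for each $v$, and a union over $v \in \mt$ finishes the proof. The points needing the most care are the martingale-difference version of von Bahr--Esseen for the cross-level terms and the repeated use of the increment identity that makes every rate collapse to $\alpha_{t+s}(h)$.
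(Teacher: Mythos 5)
Your proposal is correct, and its overall skeleton (compare level-$n$ truncations, control everything in $L^h$ for $h\in(1,h_{t+s})\cap(1,2]$, and let the independent-increments identity $\expect{W_{t+s}^h}=\expect{W_t^h}\expect{W_{t,t+s}^h}$ collapse every rate to $\alpha_{t+s}(h)<0$) matches the paper's; but you handle the central term, your piece (ii), by a genuinely different and more laborious route. The paper's proof rests on one algebraic observation that you explicitly miss: writing $\Gamma_t(w)X_{t,t+s}(w)=\bigl(\Gamma_t(w)/X_t(w)\bigr)X_{t+s}(w)$ and $\Gamma_{t+s}^{(n)}(w)=\Gamma(w)X_{t+s}(w)$ turns piece (ii) into $\sum_{|w|=n}\bigl(\Gamma_t(w)/X_t(w)-\Gamma(w)\bigr)X_{t+s}(w)$, where by \eqref{eq:subtree_mass} the mean-zero factors depend only on the time-$t$ weights strictly inside the disjoint subtrees $\mt(w)$, hence are mutually independent and independent of all the path weights $X_{t+s}(w)$; a single application of Lemma \ref{lemma:bahr_esseen} exactly as stated, followed by Lemma \ref{lemma: cascade_Fan}, then gives the rate $\log\expect{W_{t,t+s}^h}+\lambda_\Gamma(h)+\log\expect{W_t^h}=\alpha_{t+s}(h)$. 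You instead keep the summands as $X_{t,t+s}(w)\bigl(\Gamma_t(w)-\Gamma_t^{(n)}(w)\bigr)$, correctly observe that neither factor family is independent, and repair this by splitting off the conditional mean and telescoping $X_{t,t+s}(w)-1$ across generations; after regrouping by the generation-$j$ ancestor the inner sums collapse, via the flow property, to $\Gamma_t(u)-\Gamma_t^{(n)}(u)$, and you land on the same rate. This works, but it needs the martingale-difference form of the von Bahr--Esseen inequality across the levels $j$, which is stronger than the paper's Lemma \ref{lemma:bahr_esseen} (it is in the original von Bahr--Esseen paper, so this is extra machinery to be cited and checked rather than a gap). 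Your pieces (i) and (iii) are handled essentially as in the paper, which dispenses with them more quickly by noting that both limits already exist almost surely, so that it suffices to show the difference of the level-$n$ objects tends to zero; your $L^h$ treatment of (i) is a harmless strengthening. The net assessment: correct, but the paper's factorization buys a one-line reduction to the i.i.d.\ inequality where you need a two-stage conditional and martingale-difference argument.
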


\begin{proof}
We will show that for every $v$ in $\mt$,
\begin{align}
\Gamma_{t+s}(v) = \mathcal C (\Gamma_t; W_{t, t+s})(v). \label{markov_condition_for_v}
\end{align}
We first concentrate on the case $v = \root$. Note that both sides of equation  \eqref{markov_condition_for_v} are defined as limits, and it suffices to prove that
\begin{align}  \label{sufficient_markov_condition_2}
\lim_{n \to \infty } \mathcal C (\Gamma_t; X_{t, t+s})^{(n)} (\root) - \Gamma_{t+s}^{(n)}(\root) = 0.
\end{align}
We will show that the left hand side of \eqref{sufficient_markov_condition_2} goes to zero in $L^h$ for any $h \in (1,h_T)$, and therefore the a.s. limit must be zero as well. Fix $h \in (1, h_T)$ and recall that
\begin{align*}
\mathcal C (\Gamma_t; X_{t, t+s})^{(n)}(\root) = \vsum \Gamma_t(v) X_{t , t+s}(v) = \vsum \frac{\Gamma_t(v)}{X_t(v)}X_{t+s}(v).
\end{align*}
The last equality follows since $X_{t}X_{t,t+s} = X_{t+s}$ by construction. Therefore, by definition of $\Gamma_{t+s}^{(n)}$,
\begin{align*}
\mathcal C (\Gamma_t; X_{t, t+s})^{(n)}(\root) - \Gamma_{t+s}^{(n)}(\root) = \vsum \left( \frac{\Gamma_t(v)}{X_t(v)} - \Gamma(v) \right) X_{t+s}(v).
\end{align*}
Now note that the random variables $\{ \Gamma_t(v)/X_t(v) - \Gamma(v)  : |v| =n \}$ are mean zero, and each depends only on the $W_t$ weights in the subtree $\mt(v)$. Hence they are independent of each other \textit{and} of all the weight processes $t \mapsto W_t(v)$ with $|v| \leq n$. In particular each $X_{t+s}(v)$, for $|v|=n$, is independent of these random variables. Thus we can apply the von Bahr-Esseen inequality to the difference above to get
\begin{align*}
\expect{\left| \mathcal C (\Gamma_t; X_{t, t+s})^{(n)}(\root) - \Gamma_{t+s}^{(n)}(\root) \right|^h}
& \leq \vsum \expect{X_{t+s}(v)^h}  \expect{ \left| \frac{\Gamma_t(v)}{X_t(v)} - \Gamma(v) \right|^h} \\
& = \vsum \expect{X_{t,t+s}(v)^h} \expect{\left| \Gamma_t(v) - X_t(v) \Gamma(v) \right|^h}.
\end{align*}
Recognizing that $X_t(v) \Gamma(v) = \Gamma_t^{(n)}(v)$ and applying Lemma \ref{lemma: cascade_Fan} finishes the proof, since for $h \in (1, h_T)$ we have
\begin{align*}
\limsup_{n \to \infty} \frac{1}{n} \log \expect{\left| \mathcal C (\Gamma_t; X_{t, t+s})^{(n)}(\root) - \Gamma_{t+s}^{(n)}(\root) \right|^h} &\leq \log \expect{W_{t,t+s}^h} + \lambda_{\Gamma}(h) + \log \expect{W_t^h} \\
&= \lambda_{\Gamma}(h) + \log \expect{W_{t+s}^h} \\
&\leq \alpha_T(h) \\
& < 0.
\end{align*}
The second inequality follows from Remark \ref{remark:increasing_moments}, and the last is by the $W_T$-regularity of $\Gamma$.

The proof for $v \neq \root$ is similar, with all sums in the above statements being replaced with sums over the appropriate subtrees, and by making use of the fact that $\Gamma_{|v}$ is $W_T$-regular for all $v$. Finally, since there are only countably many vertices on the tree the statement holds for all vertices simultaneously.
\end{proof}

Note that Theorem \ref{theorem:markov} assumes nothing about the regularity of $\Gamma_t$, even though we applied the cascading procedure to it. Theorem \ref{theorem:markov} gives that $\mathcal{C}(\Gamma_t, W_{t,t+s})$ is indeed a non-trivial measure since it is equal to $\Gamma_{t+s}$, which was already known to be non-trivial by the $W_T$-regularity of the original measure $\Gamma$. However, the regularity condition is only a sufficient one, and so the fact that $\mathcal{C}(\Gamma_t, W_{t,t+s})$ is non-trivial does not imply that $\Gamma_t$ is $W_{t,T}$ regular. This regularity statement is true, however, and we will now prove it. In some sense this gives a classification of the state space of the Markov process: each $\Gamma_t$ lives in the space of $W_{t,T}$-regular measures, which is itself contained in the space of $W_T$-regular measures.

\begin{lemma} \label{lemma:cascade_P}
Under Assumptions \ref{assumption:basic}, the measures $\Gamma_t$ are $W_{t,T}$-regular for each $t \leq T$.
\end{lemma}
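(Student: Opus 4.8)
The plan is to control the random pressure function $\lambda_{\Gamma_t}$ of the cascade measure $\Gamma_t$ by its annealed counterpart $\alpha_t$, and then exploit an exact additivity of the quantity $\expect{W \log W}$ across the decomposition $W_T = W_t \, W_{t,T}$. The target inequality to verify, by Definition \ref{defn:regularity}, is $\expect{W_{t,T} \log W_{t,T}} + \lambda_{\Gamma_t}'(1+) < 0$. My strategy is to first prove the almost sure bound $\lambda_{\Gamma_t}(h) \le \alpha_t(h)$ along a sequence $h_k \downarrow 1$, deduce from it that $\lambda_{\Gamma_t}'(1+) \le \alpha_t'(1+)$, and finally show that substituting $\alpha_t'(1+)$ and using the additivity identity collapses the left-hand side to $\expect{W_T \log W_T} + \lambda_{\Gamma}'(1+)$, which is negative by the $W_T$-regularity of $\Gamma$.

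For the almost sure bound I would transfer the annealed estimate of Corollary \ref{cascade_Moment} to a quenched one by Markov's inequality and Borel--Cantelli. Fix $h \in (1, h_t)$ and $\gamma > 0$. Corollary \ref{cascade_Moment} gives $\expect{\vsum \Gamma_t(v)^h} \le e^{n(\alpha_t(h)+\gamma)}$ for large $n$, so $\prob{\vsum \Gamma_t(v)^h > e^{n(\alpha_t(h)+2\gamma)}} \le e^{-n\gamma}$, which is summable; Borel--Cantelli then yields $\lambda_{\Gamma_t}(h) \le \alpha_t(h) + 2\gamma$ almost surely, and letting $\gamma \downarrow 0$ gives $\lambda_{\Gamma_t}(h) \le \alpha_t(h)$ almost surely for this fixed $h$. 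Choosing a sequence $h_k \downarrow 1$ in $(1,h_t)$ and intersecting the countably many full-probability events produces a single almost sure event on which the bound holds for every $h_k$. Since each $\frac1n \log \vsum \Gamma_t(v)^h$ is convex in $h$ (a log-sum-exp), $\lambda_{\Gamma_t}$ is a decreasing limit of convex functions and hence itself convex; its right derivative at $h=1$ therefore exists, and because the convex difference quotient $(\lambda_{\Gamma_t}(h) - \lambda_{\Gamma_t}(1))/(h-1)$ increases to that derivative, it suffices to evaluate the limit along $h_k$. Using $\lambda_{\Gamma_t}(1) = \alpha_t(1) = 0$ gives $\lambda_{\Gamma_t}'(1+) = \lim_k \lambda_{\Gamma_t}(h_k)/(h_k-1) \le \lim_k \alpha_t(h_k)/(h_k-1) = \alpha_t'(1+)$.

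It remains to evaluate $\alpha_t'(1+)$ and exhibit the cancellation. Differentiating $\alpha_t(h) = \lambda_{\Gamma}(h) + \log \expect{W_t^h}$ at $h=1$ (the derivative of $\log \expect{W_t^h}$ equalling $\expect{W_t \log W_t}/\expect{W_t} = \expect{W_t \log W_t}$, with the interchange justified as in Remark \ref{remark:inherit_regular} using the finite $(1+\delta)$ moment inherited from $W_T$ via Remark \ref{remark:increasing_moments}) gives $\alpha_t'(1+) = \lambda_{\Gamma}'(1+) + \expect{W_t \log W_t}$. On the other hand, writing $W_T = W_t \, W_{t,T}$ with $W_t$ and $W_{t,T}$ independent and each of mean one, a direct computation gives the additivity identity $\expect{W_T \log W_T} = \expect{W_t \log W_t} + \expect{W_{t,T} \log W_{t,T}}$. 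Combining the two,
\begin{align*}
\expect{W_{t,T} \log W_{t,T}} + \lambda_{\Gamma_t}'(1+)
&\le \expect{W_{t,T} \log W_{t,T}} + \expect{W_t \log W_t} + \lambda_{\Gamma}'(1+) \\
&= \expect{W_T \log W_T} + \lambda_{\Gamma}'(1+) < 0,
\end{align*}
the final inequality being exactly the $W_T$-regularity of $\Gamma$. This is the asserted $W_{t,T}$-regularity; note it also covers $t = T$, where $W_{T,T} \equiv 1$ and the statement reduces to $\lambda_{\Gamma_T}'(1+) < 0$.

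The main obstacle is the passage from the annealed bound of Corollary \ref{cascade_Moment}, which only controls the expectation $\expect{\vsum \Gamma_t(v)^h}$, to an almost sure statement about the random function $\lambda_{\Gamma_t}$ that is strong enough to bound its one-sided derivative at $h=1$. The Borel--Cantelli argument delivers the bound only one value of $h$ at a time, and the decisive point that makes this suffice is the convexity of $\lambda_{\Gamma_t}$: it guarantees that the right derivative exists and reduces its control to a single countable sequence $h_k \downarrow 1$, hence to a countable intersection of almost sure events. A secondary technical point is justifying the differentiation of $\log \expect{W_t^h}$ at $h=1$; the subtlety is that $\log W_t$ is not signed, but positivity of $W_t$ together with the inherited $(1+\delta)$ moment controls both tails and legitimizes the interchange.
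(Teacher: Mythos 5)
Your proof is correct and follows essentially the same route as the paper's: the quenched bound $\lambda_{\Gamma_t}(h)\le\lambda_{\Gamma}(h)+\log\expect{W_t^h}$ via Corollary \ref{cascade_Moment} and Borel--Cantelli, followed by the multiplicative decomposition $\expect{W_T^h}=\expect{W_t^h}\expect{W_{t,T}^h}$ and a difference-quotient limit as $h\downarrow 1$. Your explicit treatment of the countable sequence $h_k\downarrow 1$ and the convexity of $\lambda_{\Gamma_t}$ is a welcome elaboration of a point the paper handles tersely with the Mean Value Theorem, but it is not a different argument.
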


\begin{proof}
From Corollary \ref{cascade_Moment}, for $h \in (1, h_T)$, we get that
\begin{align*}
\limsup_{n \to \infty} \frac{1}{n} \log \vsum \expect{\Gamma_t(v)^h}
& \leq \alpha_t(h) < \alpha_T(h) < 0.
\end{align*}
Therefore an application of Borel-Cantelli implies that
\begin{align*}
\lambda_{\Gamma_t}(h) = \limsup_{n \to \infty} \frac{1}{n} \log \vsum \Gamma_t(v)^h \leq \lambda_{\Gamma}(h) + \log \expect{W_t^h}
\end{align*}
with probability one. This gives that
\begin{align*}
\lambda_{\Gamma_t}(h)  + \log \expect{W_{t,T}^h } \leq \lambda_{\Gamma}(h) + \log \expect{W_t^h} + \log \expect{W_{t,T}^h} = \lambda_{\Gamma}(h) + \log \expect{W_T^h}
\end{align*}
for all $h \in (1, h_T)$. Now apply the Mean Value Theorem and take $h \downarrow 1$ to finish the proof.
\end{proof}

\section{Gaussian Weight Processes \label{sec:gaussian}}

The simplest case of weights satisfying Assumption \ref{assumption:basic} is an exponential of a Brownian motion, properly normalized. In this section we study some extra properties of the random cascade process with these weights; specifically we derive stochastic calculus formulas for the evolution of the measures as driven by the Brownian noise. We restrict ourselves to the simplest case when $t \mapsto \log W_t$ has stationary increments, so that
\begin{align*}
W_t(v) = \exp \left \{ B_t(v) - t/2 \right \},
\end{align*}
where $\{ B_t(v) \}_{v \in \mt}$ is a collection of independent Brownian motions with $B_0(v) = 0$. Since the $W_t$ variables have moments of all orders for all $t \geq 0$, we only need to assume that the initial measure $\Gamma$ is $W_T$-regular for some $T > 0$. It is easy to compute that
\begin{align*}
\expect{W_t \log W_t} = - \frac{t}{2},
\end{align*}
so therefore the cascade process is well-defined on $[0, -2 \lambda_{\Gamma}'(1+))$. It is straightforward to verify from the definition of $\lambda_{\Gamma}$ that $-2 \lambda_{\Gamma}'(1+)$ is maximal when $\Gamma = \theta$, and this maximum value is $2 \log 2$. Moreover, for any $ T <  -2 \lambda_{\Gamma}'(1+)$, Assumptions \ref{assumption:basic} are satisfied and hence the $t \mapsto \Gamma_t$ process is always defined on a finite time interval. By Theorems \ref{theorem:existence} and \ref{theorem:markov} the process is Markovian, \textit{and} $t \mapsto \Gamma_t(v)$ is a continuous martingale for each $v \in \mt$. Since
\begin{align*}
X_t(\xi_n) = \exp \left \{ \sum_{i=1}^n B_t(\xi_n) - nt/2 \right \},
\end{align*}
it is easy to compute that
\begin{align*}
\frac{dX_t(\xi_n)}{X_t(\xi_n)} = \sum_{i=1}^n dB_t(\xi_i).
\end{align*}
Therefore
\begin{align}\label{eqn:level_n_SDE}
d \Gamma_t^{(n)}(\root) &= \int_{\partial \mt} X_t(\xi_n) \left( \sum_{i=1}^n dB_t(\xi_n) \right) \, d \Gamma(\xi) = \sum_{i=1}^n \int_{\partial \mt} dB_t(\xi_i) d \Gamma_t^{(n)}(\xi).
\end{align}
This leads to the following result:

\begin{proposition}\label{prop:total_mass_SDE}
The total mass $\Gamma_t(\root)$ evolves according to the stochastic differential equation
\begin{align}\label{eqn:root_SDE}
d \Gamma_t(\root) = \sum_{i=1}^{\infty} \int_{\partial \mt} dB_t(\xi_i) \, d\Gamma_t(\xi) = \sum_{i=1}^{\infty} \E_{\Gamma_t} \left[ dB_t(\xi_i) \right] = \sum_{\substack{v \in \mt \\ v \neq \root}} \Gamma_t(v) \, dB_t(v),
\end{align}
where all stochastic differentials are understood in the It\^{o} sense. Equivalently
\begin{align*}
\frac{d \Gamma_t(\root)}{\Gamma_t(\root)} = \sum_{i=1}^{\infty} \E_{\Gamma_t^*} \left[ dB_t(\xi_i) \right] = \sum_{\substack{v \in \mt \\ v \neq \root}} \Gamma_t^*(v) \, dB_t(v),
\end{align*}
where $\Gamma_t^*$ is $\Gamma_t$ normalized to be a probability measure. The quadratic variation of the latter process is
\begin{align*}
\frac{d \Bigr \langle  \Gamma_t(v), \Gamma_t(v) \Bigl \rangle}{\Gamma_t(v)^2} = \sum_{i=1}^{\infty} \E_{\Gamma_t^* \times \Gamma_t^*} \left[ \indicate{\xi_i = \xi_i'} \right] = \sum_{\substack{v \in \mt \\ v \neq \root}} \Gamma_t^*(v)^2 = \sum_{\substack{v \in \mt \\ v \neq \root}} \left( \frac{\Gamma_t(v)}{\Gamma_t(\root)} \right)^2.
\end{align*}
\end{proposition}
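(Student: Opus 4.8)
The plan is to obtain \eqref{eqn:root_SDE} by passing to the limit $n\to\infty$ in the finite-level identity \eqref{eqn:level_n_SDE}, and then to read off the normalized equation and its quadratic variation from the independence of the driving Brownian motions. First I rewrite \eqref{eqn:level_n_SDE} as a sum over vertices: since every ray $\xi$ with $\xi_i=u$ contributes the same increment $dB_t(u)$, grouping rays by the vertex they pass through at generation $i$ gives $\int_{\partial\mt} dB_t(\xi_i)\,d\Gamma_t^{(n)}(\xi)=\sum_{|u|=i}\Gamma_t^{(n)}(u)\,dB_t(u)$, and hence
\[
\Gamma_t^{(n)}(\root)=\Gamma(\root)+\sum_{\substack{v\in\mt,\,v\neq\root\\ |v|\leq n}}\int_0^t \Gamma_s^{(n)}(v)\,dB_s(v).
\]
This exhibits $t\mapsto\Gamma_t^{(n)}(\root)$ as a finite sum of stochastic integrals against the independent $B(v)$.

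The core of the argument is to show that the right-hand side converges, as $n\to\infty$, to $\Gamma(\root)+\sum_{v\neq\root}\int_0^t\Gamma_s(v)\,dB_s(v)$, and to identify this limit with $\Gamma_t(\root)$. For $h\in(1,h_T)$ the increment $\Gamma_t^{(n+1)}(\root)-\Gamma_t^{(n)}(\root)$ is a martingale in $t$, so by Doob's maximal $L^h$ inequality and the rate estimate of Lemma \ref{lemma:rate} its supremum over $[0,T]$ has $L^h$ norm bounded by a geometrically decaying sequence governed by $\alpha_T(h)<0$, which is exactly the mechanism of Theorem \ref{theorem:existence}. This makes the series $\sum_{v\neq\root}\int_0^\cdot\Gamma_s(v)\,dB_s(v)$ converge in $L^h$, uniformly on $[0,T]$, to a continuous martingale; replacing the integrand $\Gamma_s^{(n)}(v)$ by $\Gamma_s(v)$ (both in the surviving terms and in the tail $|v|>n$) is controlled in the same way using Lemma \ref{lemma: cascade_Fan}. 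Since $\Gamma_t^{(n)}(\root)\to\Gamma_t(\root)$ by Theorem \ref{theorem:existence}, the two limits coincide and \eqref{eqn:root_SDE} follows. I expect this passage to the limit, namely interchanging $n\to\infty$ with the stochastic integration while securing enough integrability, to be the main obstacle. It is cleanest when the relevant second moments are exponentially summable (i.e. $h_T>2$, where It\^{o}'s isometry and Corollary \ref{cascade_Moment} apply directly); otherwise one runs the estimates in $L^h$ for some $h\in(1,h_T)$ via the von Bahr--Esseen inequality (Lemma \ref{lemma:bahr_esseen}) or Burkholder--Davis--Gundy, and the quadratic variation is then understood as that of a continuous local martingale.

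The normalized equation is obtained by dividing by $\Gamma_t(\root)$, which is almost surely strictly positive by the $W_T$-regularity of $\Gamma$ and Fan's criterion, and by regrouping $\sum_{v\neq\root}\Gamma_t^*(v)\,dB_t(v)=\sum_{i\geq1}\sum_{|u|=i}\Gamma_t^*(u)\,dB_t(u)=\sum_{i\geq1}\E_{\Gamma_t^*}[dB_t(\xi_i)]$. For the quadratic variation I use that the $B(v)$ are independent, so $d\langle B(u),B(w)\rangle_t=\indicate{u=w}\,dt$. Vertices in distinct generations are automatically distinct, so all cross terms between different levels $i\neq j$ vanish and only the diagonal survives, giving
\[
\frac{d\langle\Gamma(\root)\rangle_t}{\Gamma_t(\root)^2}=\sum_{i\geq1}\sum_{|u|=i}\Gamma_t^*(u)^2\,dt=\sum_{v\neq\root}\Gamma_t^*(v)^2\,dt.
\]
Finally, recognizing $\sum_{|u|=i}\Gamma_t^*(u)^2$ as the $\Gamma_t^*\times\Gamma_t^*$-probability that two independent rays agree at generation $i$ yields the factor $\E_{\Gamma_t^*\times\Gamma_t^*}[\indicate{\xi_i=\xi_i'}]$ and the stated form. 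The interchange of the quadratic variation with the limit $n\to\infty$ is justified by the same uniform $L^h$ convergence established in the previous step.
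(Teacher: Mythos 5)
Your proposal is correct and follows essentially the same route as the paper: pass to the limit in the finite-level identity \eqref{eqn:level_n_SDE}, split the error into the terms on levels $\leq n$ (where $\Gamma_s^{(n)}(v)$ is replaced by $\Gamma_s(v)$, controlled by Lemma \ref{lemma: cascade_Fan}) plus the tail over levels $>n$ (controlled by Corollary \ref{cascade_Moment}), and handle both via Burkholder--Davis--Gundy and Doob's maximal inequality in $L^h$ for $h\in(1,2]$ followed by Borel--Cantelli, with the quadratic variation then read off from the independence of the $B(v)$. The only cosmetic difference is your mention of the It\^{o} isometry and von Bahr--Esseen as alternatives; the paper works exclusively with the BDG/Doob argument, which is what actually carries the proof when $h_T\leq 2$.
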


Before proceeding with the proof we first note that all of the stochastic integrals
\begin{align*}
\int_0^t \Gamma_s^{(n)}(v) \, dB_s(v), \quad \int_0^t \Gamma_s(v) \, dB_s(v)
\end{align*}
are well-defined on $[0,T]$. Both integrands are clearly progressively measurable, and as they are continuous local martingales in time their supremum is almost surely finite on the compact interval $[0,T]$. Hence
\begin{align*}
\int_0^T \Gamma_s^{(n)}(v)^2 \, ds < \infty \quad \textrm{and} \quad \int_0^T \Gamma_s(v)^2 \, ds < \infty
\end{align*}
with probability one, which is exactly what is required for the integrals to make sense. Note, however, that the expectations of the latter integrals will not necessarily be finite for all $T$.

\begin{proof}
By the definition of $\Gamma_t(\root)$ as the limit of $\Gamma_t^{(n)}(\root)$, and computing the difference between \eqref{eqn:level_n_SDE} and \eqref{eqn:root_SDE}, it is sufficient to show that the process
\begin{align}\label{eq:SDE_diff}
t \mapsto \sum_{i=1}^n \sum_{|v|=i} \int_0^t \left( \Gamma_s(v) - \Gamma_s^{(n)}(v) \right) \, dB_s(v) + \sum_{i=n+1}^{\infty} \sum_{|v|=i} \int_0^t \Gamma_s(v) \, dB_s(v)
\end{align}
goes to zero in some sense as $n \to \infty$. We will show that the supremum over $[0,T]$ goes to zero almost surely. Our main tool will be the Burkholder-Davis-Gundy inequality, see \cite[Ch. IV, Corollary 4.2]{RV99} for details.

As the quadratic variation of the first summation in \eqref{eq:SDE_diff} is
\begin{align*}
Q_t := \sum_{i=1}^n \sum_{|v|=i} \int_0^t \left( \Gamma_s(v) - \Gamma_s^{(n)}(v) \right)^2 \, ds,
\end{align*}
the BDG inequality gives us that for $h > 0$ there is a constant $C_h > 0$ such that
\begin{align*}
\expect{\sup_{0 \leq t \leq T} \left| \sum_{i=1}^n \sum_{|v|=i} \int_0^t \left( \Gamma_s(v) - \Gamma_s^{(n)}(v) \right) \, dB_s(v) \right|^h} & \leq C_h \expect{Q_t^{h/2}}.
\end{align*}
Choose $h \leq 2$ so that, by subadditivity and a supremum bound on the integral terms, the right hand side is bounded above by
\begin{align*}
C_h T^{h/2} \sum_{i=1}^n \sum_{|v|=i} \expect{\sup_{0 \leq t \leq T} |\Gamma_t(v) - \Gamma_t^{(n)}(v)|^h}.
\end{align*}
Now by choosing $h > 1$, Doob's maximal inequality gives that this is further bounded above by
\begin{align*}
C_h^* T^{h/2} \sum_{i=1}^n \sum_{|v|=i} \expect{|\Gamma_T(v) - \Gamma_T^{(n)}(v)|^h}.
\end{align*}
By Lemma \ref{lemma: cascade_Fan} the latter term goes to zero exponentially fast as $n \to \infty$, and then Borel-Cantelli completes the proof.

For the second summation of \eqref{eq:SDE_diff}, the same argument with the BDG inequality yields that
\begin{align}\label{eq:SDE_second}
\expect{\sup_{0 \leq t \leq T} \left| \sum_{i=n+1}^{\infty} \sum_{|v|=i} \int_0^t \Gamma_s(v) \, dB_s(v) \right|^h } \leq C_h^* T^{h/2} \sum_{i=n+1}^{\infty} \sum_{|v|=i} \expect{\Gamma_T(v)^h}.
\end{align}
From the proof of Lemma \ref{lemma:cascade_P} we have that
\begin{align*}
\limsup_{n \to \infty} \frac{1}{n} \log \sum_{|v|=n} \expect{\Gamma_T(v)^h} \leq \lambda_{\Gamma}(h) + \log \expect{W_T^h} < 0
\end{align*}
for $h$ sufficiently close to $1$, and hence by \eqref{eq:SDE_second} and the Borel-Cantelli lemma the second summation of \eqref{eq:SDE_diff} term goes to zero almost surely.
\end{proof}

Using equation \eqref{eq:subtree_mass} this leads to the following formulas for the evolution at other vertices:

\begin{corollary}\label{corollary:total_mass_SDE}
For $v \in \mt$ the mass $\Gamma_t(v)$ evolves as
\begin{align*}
\frac{d \Gamma_t(v)}{\Gamma_t(v)} = \sum_{i=1}^n dB_t(v_i) + \sum_{\substack{u \in \mt(v) \\ u \neq v}} \frac{\Gamma_t(u)}{\Gamma_t(v)} \, dB_t(u),
\end{align*}
where $\root = v_0, v_1, v_2, \ldots, v_n = v$ are the vertices from the root to $v$. In particular this gives that if $u$ is not a descendant of $v$ or vice-versa then
\begin{align*}
\frac{d \,\Bigl \langle \Gamma_t(u), \Gamma_t(v) \Bigr \rangle}{ \Gamma_t(u) \Gamma_t(v)} = |u \wedge v| \, dt,
\end{align*}
where $u \wedge v$ is the last common ancestor of the paths to $u$ and $v$.
\end{corollary}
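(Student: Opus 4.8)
The plan is to leverage the self-similarity of the tree so as to reduce everything to Proposition \ref{prop:total_mass_SDE}. Using the time-dependent form of \eqref{eq:subtree_mass} I would first factor $\Gamma_t(v) = X_t(v)\, \tilde{\Gamma}_t(v)$, where $\tilde{\Gamma}_t(v) := \mathcal{C}(\Gamma_{|v}; W_t)(v)$ is the total mass of the cascade run on the subtree $\mt(v)$ with initial measure $\Gamma_{|v}$. Since $\Gamma_{|v}$ is again $W_T$-regular by Remark \ref{remark:submeasure_regularity}, Proposition \ref{prop:total_mass_SDE} applies verbatim on $\mt(v)$, whose root is $v$, and gives
\begin{align*}
d\tilde{\Gamma}_t(v) = \sum_{\substack{u \in \mt(v) \\ u \neq v}} \mathcal{C}(\Gamma_{|v}; W_t)(u)\, dB_t(u) = \sum_{\substack{u \in \mt(v) \\ u \neq v}} \frac{\Gamma_t(u)}{X_t(v)}\, dB_t(u),
\end{align*}
where the second equality is \eqref{eq:subtree_mass} once more.

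For the remaining factor, the same computation that produced $dX_t(\xi_n)/X_t(\xi_n) = \sum_{i=1}^n dB_t(\xi_i)$ gives $dX_t(v)/X_t(v) = \sum_{i=1}^n dB_t(v_i)$ along the path $\root = v_0, v_1, \ldots, v_n = v$, the It\^{o} correction being cancelled exactly by the $-nt/2$ drift in $W_t$. The key observation is that $X_t(v)$ is driven only by the Brownian motions on this path, whereas $\tilde{\Gamma}_t(v)$ is driven only by the $B_t(u)$ with $u$ a \emph{strict} descendant of $v$; these two index sets are disjoint, so the driving noises are independent and $d\langle X_t(v), \tilde{\Gamma}_t(v)\rangle = 0$. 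It\^{o}'s product rule then has no correction term, giving $d\Gamma_t(v) = X_t(v)\, d\tilde{\Gamma}_t(v) + \tilde{\Gamma}_t(v)\, dX_t(v)$, and dividing through by $\Gamma_t(v) = X_t(v)\,\tilde{\Gamma}_t(v)$ yields the first claimed identity
\begin{align*}
\frac{d\Gamma_t(v)}{\Gamma_t(v)} = \sum_{i=1}^n dB_t(v_i) + \sum_{\substack{u \in \mt(v) \\ u \neq v}} \frac{\Gamma_t(u)}{\Gamma_t(v)}\, dB_t(u).
\end{align*}

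For the covariation I would write this SDE for both $\Gamma_t(u)$ and $\Gamma_t(v)$ and expand the bracket term by term using $d\langle B_t(a), B_t(b)\rangle = \indicate{a=b}\, dt$. Each process is driven by the Brownian motions indexed by the vertices on its ancestral path together with those strictly inside its own subtree. When $u$ and $v$ are incomparable the subtrees $\mt(u)$ and $\mt(v)$ are disjoint, their ancestral paths agree precisely up to the last common ancestor $u \wedge v$, and no strict descendant of one vertex can lie on the ancestral path of the other; hence the only vertices shared between the two index sets are the common ancestral path vertices $v_1 = u_1, \ldots, v_{|u \wedge v|} = u_{|u \wedge v|} = u \wedge v$, of which there are exactly $|u \wedge v|$. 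On each of these the coefficient is $1$ in both SDEs, so all surviving cross terms equal $dt$ and
\begin{align*}
\frac{d\langle \Gamma_t(u), \Gamma_t(v)\rangle}{\Gamma_t(u)\,\Gamma_t(v)} = \sum_{i=1}^{|u \wedge v|} 1 \cdot 1\, dt = |u \wedge v|\, dt.
\end{align*}

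The steps are routine once the factorization is in place; the one point needing care is the bookkeeping of the driving Brownian motions. I expect the main, though not serious, obstacle to be the combinatorial verification that the index sets overlap exactly in the shared ancestral path --- this is precisely where the hypothesis that $u$ and $v$ are incomparable, and the resulting disjointness of $\mt(u)$ and $\mt(v)$, are used --- together with the parallel check that $d\langle X_t(v), \tilde{\Gamma}_t(v)\rangle$ vanishes so that the product rule introduces no drift.
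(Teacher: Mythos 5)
Your proposal is correct and is essentially the argument the paper intends: the corollary is stated as a consequence of equation \eqref{eq:subtree_mass} and Proposition \ref{prop:total_mass_SDE}, and your factorization $\Gamma_t(v) = X_t(v)\,\mathcal{C}(\Gamma_{|v}; W_t)(v)$, the application of the proposition on the subtree (justified by Remark \ref{remark:submeasure_regularity}), the vanishing cross-variation in the product rule, and the bookkeeping of shared Brownian motions for the covariation are exactly the details the paper leaves implicit. No gaps.
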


Proposition \ref{prop:total_mass_SDE} says that the total mass evolves as a continuous time exponential martingale. Its logarithm accumulates quadratic variation at a rate given by the last expression of Proposition \ref{prop:total_mass_SDE}, and, as is well known, the time at which an exponential martingale hits zero is equivalent to the time at which the accumulated quadratic variation reaches infinity. This gives another interpretation of the lifetime of the $\Gamma_t$ process:

\begin{corollary}\label{corollary:explosion_time}
The $\Gamma_t$ process reaches the zero measure at exactly the time
\begin{align*}
\sup \left \{ t \geq 0 : \int_0^t \sum_{\substack{v \in \mt \\ v \neq \root}} \left( \frac{\Gamma_s(v)}{\Gamma_s(\root)} \right)^2 ds < \infty \right \}.
\end{align*}
\end{corollary}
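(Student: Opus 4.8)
The plan is to reduce the statement to the classical dichotomy for the stochastic exponential of a continuous local martingale, using the representation of the total mass supplied by Proposition \ref{prop:total_mass_SDE}. First I would record that Proposition \ref{prop:total_mass_SDE} exhibits $t \mapsto \Gamma_t(\root)$ as the stochastic exponential of the continuous local martingale
\[
M_t := \sum_{\substack{v \in \mt \\ v \neq \root}} \int_0^t \Gamma_s^*(v) \, dB_s(v);
\]
indeed the SDE $d\Gamma_t(\root) = \Gamma_t(\root) \, dM_t$ integrates by It\^{o}'s formula to $\Gamma_t(\root) = \Gamma_0(\root) \exp\{ M_t - \tfrac12 \langle M \rangle_t \}$, and by the last display of Proposition \ref{prop:total_mass_SDE} the quadratic variation clock is exactly the integral in the statement,
\[
A_t := \langle M \rangle_t = \int_0^t \sum_{\substack{v \in \mt \\ v \neq \root}} \left( \frac{\Gamma_s(v)}{\Gamma_s(\root)} \right)^2 ds.
\]
Since $\Gamma_t(\root)$ is a nonnegative continuous martingale, it is absorbed at $0$, and the goal becomes to show that the time at which it reaches $0$ coincides with $\sup\{ t \geq 0 : A_t < \infty \}$.

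The key step is a time-change. I would invoke the Dambis--Dubins--Schwarz theorem (see e.g. \cite{RV99}) to write $M_t = \beta_{A_t}$ for a standard Brownian motion $\beta$, enlarging the probability space if necessary so that $\beta$ is defined on all of $[0,\infty)$ in the event that $A$ stays bounded. Substituting gives the clean identity
\[
\log \frac{\Gamma_t(\root)}{\Gamma_0(\root)} = \beta_{A_t} - \tfrac12 A_t,
\]
so that the entire question reduces to the behaviour of the one-dimensional process $s \mapsto \beta_s - s/2$ run along the random clock $A_t$.

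The conclusion then follows from two complementary cases. If $A_t$ converges to a finite limit as $t$ increases to the endpoint of the interval of definition, then $\beta_{A_t}$ converges to a finite limit by continuity of Brownian paths, so $\log \Gamma_t(\root)$ stays finite and $\Gamma_t(\root)$ is bounded away from $0$; hence zero is not reached while $A_t < \infty$. Conversely, if $A_t \to \infty$ then I would apply the strong law of large numbers for Brownian motion, $\beta_s / s \to 0$ almost surely as $s \to \infty$, to get $\beta_{A_t} - \tfrac12 A_t = A_t\big( \beta_{A_t}/A_t - \tfrac12 \big) \to -\infty$, so that $\Gamma_t(\root) \to 0$ precisely as $A_t \uparrow \infty$. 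Combining the two cases identifies the zero-reaching time with $\sup\{ t : A_t < \infty \}$, as claimed.

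The main obstacle I anticipate is the bookkeeping around the finite lifetime and the possibility that the clock $A_t$ explodes in finite real time: one must apply the Dambis--Dubins--Schwarz representation on the stochastic interval $\{ A_t < \infty \}$, and must match the notion of ``reaching the zero measure'' to the almost sure limit of the positive martingale $\Gamma_t(\root)$ rather than to a first-entrance time, since the exponential is strictly positive at every finite clock value and attains $0$ only in the limit $A_t \to \infty$. Once this is handled, the argument is the standard exponential-martingale dichotomy and requires no new estimates beyond Proposition \ref{prop:total_mass_SDE}.
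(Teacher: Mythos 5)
Your proposal is correct and follows exactly the route the paper intends: the paper offers no written proof, merely asserting that the zero time of an exponential martingale coincides with the explosion time of its accumulated quadratic variation, and your Dambis--Dubins--Schwarz time change together with the dichotomy for $\beta_s - s/2$ is the standard way to justify that assertion from Proposition \ref{prop:total_mass_SDE}. Your closing remarks on absorbing the finite lifetime and interpreting ``reaching zero'' as a limit rather than a first-entrance time address the only genuine subtleties.
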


Before this time, that the total mass process is an exponential martingale naturally suggests the Girsanov theory plays a role here. This leads to the following:

\begin{corollary}\label{corollary:girsanov}
Let $P$ be the measure under which the vertex processes $\{ B_t(v) \}_{v \in \mt}$ are independent Brownian motions. Assume that $\Gamma$ is a probability measure. For any $T' < T$, let $\tilde{P}_{T'}$ be the probability measure whose Radon-Nikodym derivative with respect to $P$ is $\Gamma_{T'}(\root)$. Then under $\tilde{P}_{T'}$ the processes
\begin{align*}
\left \{ t \mapsto B_t(v) - \int_0^t \Gamma_s(v) \, ds, 0 \leq t \leq T' \right \}_{v \in T}
\end{align*}
are independent Brownian motions on the tree vertices.
\end{corollary}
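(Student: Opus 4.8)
The plan is to recognize Corollary \ref{corollary:girsanov} as an infinite-dimensional Girsanov/Cameron-Martin change of measure, where the likelihood ratio is precisely the exponential martingale $\Gamma_t(\root)$ identified in Proposition \ref{prop:total_mass_SDE}. The strategy is to first establish the result at the finite level, where everything reduces to classical finite-dimensional Girsanov theory, and then pass to the limit $n \to \infty$. Specifically, since $\Gamma$ is a probability measure we have $\Gamma_0(\root) = 1$, and by Proposition \ref{prop:total_mass_SDE} the total mass satisfies
\begin{align*}
\frac{d \Gamma_t(\root)}{\Gamma_t(\root)} = \sum_{\substack{v \in \mt \\ v \neq \root}} \Gamma_t^*(v) \, dB_t(v),
\end{align*}
so $\Gamma_t(\root)$ is a nonnegative continuous local martingale with $\Gamma_0(\root) = 1$, hence a candidate Radon-Nikodym derivative. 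On $[0,T']$ with $T' < T$ it is a genuine martingale (the total mass is a true martingale by Theorem \ref{theorem:existence}(i)), so $\tilde{P}_{T'}$ is a bona fide probability measure.

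Next I would write $\Gamma_t(\root)$ as a stochastic exponential. From the SDE above, $\Gamma_t(\root) = \mathcal{E}(M)_t$ where $M_t = \sum_{v \neq \root} \int_0^t \Gamma_s^*(v) \, dB_s(v)$ is a continuous local martingale whose covariation with each individual Brownian motion is, by the independence of the $B_t(v)$,
\begin{align*}
d \langle M, B(v) \rangle_t = \Gamma_t^*(v) \, dt.
\end{align*}
The multidimensional Girsanov theorem then asserts that under the tilted measure with density $\mathcal{E}(M)_{T'}$, each process $B_t(v) - \langle M, B(v)\rangle_t = B_t(v) - \int_0^t \Gamma_s^*(v)\,ds$ is a Brownian motion. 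Since $\Gamma$ is a probability measure $\Gamma_s^* = \Gamma_s$, which yields exactly the drift $\int_0^t \Gamma_s(v)\, ds$ claimed in the statement. The independence across vertices is inherited because the covariation structure $d\langle M, B(u)\rangle_t \, d\langle M, B(v)\rangle_t$ introduces no cross terms between distinct $B(u)$ and $B(v)$.

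\textbf{The main obstacle} is that Girsanov's theorem is classically stated for finitely many Brownian motions, whereas here we have a countable family, and the drift $\Gamma_s(v)$ for each $v$ depends on the entire configuration. I would handle this by approximation: let $\mathcal{E}(M^{(n)})_t$ denote the stochastic exponential of $M^{(n)}_t = \sum_{|v| \leq n, v \neq \root} \int_0^t \Gamma_s^{(n)}(v) \, dB_s(v)$ built from the finite-level masses, apply finite-dimensional Girsanov to the finitely many Brownian motions appearing there, and then pass to the limit using that $\Gamma_t^{(n)}(\root) \to \Gamma_t(\root)$ in $L^h$ uniformly in $t$ (from the Borel-Cantelli estimate in Theorem \ref{theorem:existence} and Lemma \ref{lemma: cascade_Fan}). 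The convergence of the densities gives convergence of the tilted measures, and the drift terms converge because $\int_0^t \Gamma_s^{(n)}(v)\,ds \to \int_0^t \Gamma_s(v)\,ds$ by the $L^h$ control established in Lemma \ref{lemma: cascade_Fan}. A secondary technical point is verifying that $\mathcal{E}(M)$ is a true martingale rather than merely a local one on $[0,T']$; this is exactly guaranteed by the strict inequality $T' < T$ together with Corollary \ref{corollary:explosion_time}, which identifies the explosion time of the total mass with the blow-up of the accumulated quadratic variation, ensuring no mass is lost before time $T$.
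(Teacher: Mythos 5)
The paper never writes out a proof of this corollary (it simply points to \cite{RV99}), and your overall strategy --- read off from Proposition \ref{prop:total_mass_SDE} that $\Gamma_t(\root)$ is the stochastic exponential of $M_t = \sum_{v \neq \root} \int_0^t \Gamma_s^*(v)\, dB_s(v)$, check it is a true martingale on $[0,T']$, and apply Girsanov --- is exactly the intended route. But there is one genuine error in your write-up: the assertion that ``since $\Gamma$ is a probability measure, $\Gamma_s^* = \Gamma_s$.'' That identity holds only at $s=0$. For $s>0$ the total mass $\Gamma_s(\root)$ is a nondegenerate random variable (a martingale started at $1$, not frozen there), so $\Gamma_s^*(v) = \Gamma_s(v)/\Gamma_s(\root) \neq \Gamma_s(v)$ almost surely. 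Your Girsanov computation is the correct part: since $d\Gamma_t(\root) = \sum_{u \neq \root} \Gamma_t(u)\, dB_t(u)$, one gets $d\langle \Gamma(\root), B(v)\rangle_t = \Gamma_t(v)\, dt$ and hence the Girsanov drift
\begin{align*}
\int_0^t \frac{d\langle \Gamma(\root), B(v)\rangle_s}{\Gamma_s(\root)} = \int_0^t \Gamma_s^*(v)\, ds,
\end{align*}
with the normalization present. The discrepancy is with the corollary as printed, not with your calculation: the paper's own use of this result in Section \ref{sec:polymers} writes the drift as $\int_0^{T'} \theta_s(v)/\theta_s(\root)\, ds$, i.e.\ with the normalized masses, which confirms that the statement of the corollary drops a factor of $\Gamma_s(\root)^{-1}$. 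You should not reconcile your (correct) output with the (apparently mistyped) statement by inserting a false identity; state the conclusion with $\Gamma_s^*(v)$ and note the inconsistency.

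Two smaller points. First, the passage from finitely many to countably many Brownian motions does not really require your truncation-and-limit argument: the local-martingale form of Girsanov (as in \cite{RV99}) applies verbatim to each fixed $v$, giving that $B_t(v) - \langle B(v), M\rangle_t$ is a continuous local martingale under $\tilde{P}_{T'}$; since quadratic covariations are unchanged under an equivalent change of measure, the family retains $\langle B(u), B(v)\rangle_t = \delta_{u,v}\, t$, and L\'{e}vy's characterization yields both the Brownian property and the joint independence in one stroke. Your approximation scheme can be made to work, but it is more delicate than you suggest (the stochastic exponential of your $M^{(n)}$ is $\Gamma_t^{(n)}(\root)$ only if you use the masses normalized by $\Gamma_t^{(n)}(\root)$, and you would need to justify convergence of the tilted laws, not just of the densities). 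Second, the true-martingale property of $\Gamma_t(\root)$ on $[0,T']$ already follows from Theorem \ref{theorem:existence}(i) together with the $L^h$ convergence of Corollary \ref{corollary:Lp_convergence}; the appeal to Corollary \ref{corollary:explosion_time} is unnecessary.
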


See \cite{RV99} for background on the Girsanov theory. In Section \ref{sec:applications} we describe an application of this result to the model of tree polymers.

\section{H\"{o}lder Continuity \label{sec:Holder}}

This section highlights an interesting application of the SDE results derived in the previous section. Once again we will assume that the weight processes are
\begin{align*}
W_t(v) = \exp \left \{ B_t(v) - t/2 \right \},
\end{align*}
where $\{ B_t(v) \}_{v \in \mt}$ is a collection of independent Brownian motions with $B_0(v) = 0$. Recall from Section \ref{sec:gaussian} that $\Gamma_t$ is a well defined measure-valued process on the time interval $[0, -2 \lambda_{\Gamma}'(1+))$. Using techniques from stochastic analysis we will show that this process $\Gamma_t$ is $\alpha$-H\"{o}lder in the Wasserstein metric for any $\alpha < 1/2$. This gives an interesting juxtaposition of discontinuity and continuity. On the one hand, the measures $\Gamma_t$ and $\Gamma_s$ are mutually singular for $t \neq s$, and hence are very discontinuous in the total variation distance. However at the same time, they  satisfy a very strong continuity condition in the Wasserstein metric on probability measures on the tree.

\begin{definition}
The Wasserstein distance between any two probability measures $\mu$ and $\nu$ on $\partial \mt$ is defined as
\begin{align*}
d_W(\mu, \nu) := \inf_{\rho \in \Lambda(\mu, \nu) } \int_{\partial \mt \times  \partial \mt}  d(\zeta, \eta) \text{d}\rho(\zeta, \eta),
\end{align*}
where $\Lambda(\mu, \nu)$ is the collection of all couplings of the measures $\mu$, $\nu$. Recall that the distance function is $d(\zeta, \eta) = 2^{-|\zeta \wedge \eta|}$.
\end{definition}

Note that this is a metric on probability measures on $\partial \mt$. Our main result in this section applies to the normalized process $\tilde{\Gamma}_t := \Gamma_t/\Gamma_t(\root)$.

\begin{theorem} \label{w_Holder}
Let $T <  -2 \lambda_{\Gamma}'(1+)$. Then for any $\alpha < 1/2$, the process $\tilde{\Gamma}_t$, $0 \leq t \leq T$, is $\alpha$-H\"{o}lder continuous in the Wasserstein metric.
\end{theorem}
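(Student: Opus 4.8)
The plan is to reduce the Wasserstein distance to an explicit weighted sum over the tree vertices, estimate the increments of that sum with the stochastic calculus of Section \ref{sec:gaussian}, and then apply Kolmogorov's continuity criterion. Realizing the ultrametric $d(\zeta,\eta)=2^{-|\zeta\wedge\eta|}$ as the geodesic metric on $\mt$ in which the edge above $v$ has length $2^{-|v|-1}$, the flow (transport-along-a-tree) description of $W_1$ -- or, more elementarily, an explicit telescoping coupling -- gives the bound
\begin{align*}
d_W(\tilde{\Gamma}_s,\tilde{\Gamma}_t)\ \le\ \sum_{v\neq\root}2^{-|v|}\left|\tilde{\Gamma}_s(v)-\tilde{\Gamma}_t(v)\right|,
\end{align*}
which is all that a continuity estimate requires. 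Each coordinate $\tilde{\Gamma}_t(v)=\Gamma_t(v)/\Gamma_t(\root)$ is bounded in $[0,1]$, and applying It\^o's quotient rule to the martingales of Proposition \ref{prop:total_mass_SDE} and Corollary \ref{corollary:total_mass_SDE} exhibits it as a continuous semimartingale whose martingale part has instantaneous quadratic variation $q_t(v)\le |v|\,\tilde{\Gamma}_t(v)^2+\sum_{w\in\mt(v)}\tilde{\Gamma}_t(w)^2+\tilde{\Gamma}_t(v)^2\,\Sigma_t$, and whose drift is controlled by the same quantities, where $\Sigma_t:=\sum_{w\neq\root}\tilde{\Gamma}_t(w)^2$ is the participation ratio -- precisely the rate at which $\log\Gamma_t(\root)$ accumulates quadratic variation in Proposition \ref{prop:total_mass_SDE}.

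The goal of the core estimate is $\E[d_W(\tilde{\Gamma}_s,\tilde{\Gamma}_t)^p]\le C_p\,|t-s|^{p/2}$ for every $p$; since $d_W\le 1$ all such moments are automatically finite, and only the correct power of $|t-s|$ is at stake. A per-vertex application of Minkowski's inequality fails here, since there are $2^n$ vertices at level $n$ and the path term contributes a factor $|v|^{1/2}$, producing a divergent $\sum_n 2^{-n}\cdot 2^{n}\cdot n^{1/2}$. Instead I would group by level, bound $\sum_{|v|=n}|\cdot|\le 2^{n/2}(\sum_{|v|=n}|\cdot|^2)^{1/2}$ by Cauchy--Schwarz over the level, and apply the Hilbert-space-valued Burkholder--Davis--Gundy inequality to the $\R^{2^n}$-valued martingale $(\tilde{\Gamma}_{\cdot}(v))_{|v|=n}$. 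This reduces the level-$n$ contribution to the total level quadratic variation $\sum_{|v|=n}\int_s^t q_u(v)\,du\le \int_s^t[(n+1)\Sigma_u+\Sigma_u^2]\,du$, the key point being that summing $q_u(v)$ over a whole level telescopes the overlapping paths and leaves only $(n+1)\Sigma_u+\Sigma_u^2$. The Cauchy--Schwarz factor $2^{n/2}$ now combines with the weight $2^{-n}$ to give a convergent $\sum_n 2^{-n/2}(n+1)^{1/2}$, while the time integral supplies the $|t-s|^{1/2}$; the drift contributes only $O(|t-s|)$ and is negligible. This yields $\E[d_W^p]\le C_p|t-s|^{p/2}$ \emph{provided} $\sup_{u\le T}\E[\Sigma_u^{p}]<\infty$.

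Granting that bound, Kolmogorov's criterion produces a modification that is $\alpha$-H\"older for every $\alpha<\tfrac12-\tfrac1p$, and letting $p\to\infty$ gives all $\alpha<\tfrac12$. The main obstacle -- and the step where strict subcriticality $T<-2\lambda_\Gamma'(1+)$ is genuinely used -- is exactly the uniform moment bound $\sup_{u\le T}\E[\Sigma_u^p]<\infty$ for all $p$. This is delicate because the \emph{unnormalized} participation ratio $\sum_{w}\Gamma_u(w)^2$ has infinite expectation as soon as $h_u<2$, which occurs for $u$ near the critical time; so the control cannot come from naive second moments of $\Gamma$ and must instead be extracted through the normalization, together with the a.s.\ finiteness of $\int_0^T\Sigma_u\,du$ from Corollary \ref{corollary:explosion_time}. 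I would establish it by combining the exponential decay estimates of Corollary \ref{cascade_Moment} (valid for exponents in $(1,h_u)$) with the per-level bound $\sum_{|w|=n}\tilde{\Gamma}_u(w)^2\le \max_{|w|=n}\tilde{\Gamma}_u(w)$ and a negative-moment control on $\Gamma_u(\root)$, which together force the tail of $\Sigma_u$ to be light enough on any interval strictly below criticality for every moment to exist. Finally, I note that Theorem \ref{w_Holder} asks only for the positive H\"older direction, so the matching lower bound on the exponent is not part of this argument.
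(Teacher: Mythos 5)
Your reduction of the Wasserstein distance to the weighted flow sum $d_W(\tilde\Gamma_s,\tilde\Gamma_t)\le\sum_{v\neq\root}2^{-|v|}|\tilde\Gamma_s(v)-\tilde\Gamma_t(v)|$ is correct and is in fact a cleaner starting point than the explicit coupling the paper constructs in Lemma \ref{w_UpBound}; the overall architecture (It\^{o} formula for the coordinates, BDG, Kolmogorov--Chentsov) is also the right one. But there is a genuine gap at exactly the step you flag yourself: everything in your estimate is funnelled through the claim $\sup_{u\le T}\E[\Sigma_u^p]<\infty$ for \emph{all} $p$, where $\Sigma_u=\sum_{w\neq\root}\tilde\Gamma_u(w)^2$, and this is not proved --- the final paragraph is a sketch of intentions, not an argument. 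The claim is far from routine. Since $\E[\Sigma_u^p]^{1/p}\ge\E[\Sigma_u]$, you would at minimum need $\E[\Sigma_u]<\infty$, i.e.\ integrability of the expected overlap $\sum_n\E\bigl[\sum_{|v|=n}\tilde\Gamma_u(v)^2\bigr]$; for $u$ near $T$ near criticality one has $h_u$ close to $1$, all unnormalized moments of order $\ge h_u$ blow up, and whether the division by $\Gamma_u(\root)$ rescues arbitrarily high moments of the overlap is a serious question in its own right (negative moments of the total mass enter, and the numerator and denominator are strongly correlated). Your Cauchy--Schwarz-plus-Corollary \ref{cascade_Moment} sketch runs into exactly this wall: the moment bounds of Section 2 are first-moment (order $h<h_u$) bounds on level sums, and raising them to a large power $p$ is not permitted.

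The paper's proof avoids this entirely by localization rather than moments. Lemma \ref{stopping_Time} produces stopping times $\tau_N$ with $\P(\tau_N<T)\to0$ such that the stopped process satisfies the \emph{pathwise} geometric envelope $\Gamma_{t\wedge\tau_N}(v)\le CN\beta^{|v|}$ with $\beta<1$ (the point being that $A_t=\sup_v\beta^{-|v|}\sup_{s\le t}\Gamma_s(v)$ is shown to be a continuous, finite, nondecreasing process). Feeding this deterministic bound into BDG gives the Kolmogorov moment estimate \eqref{Lp_KC} with constant $C^pN^p\beta^{p|v|}(|v|+C_\beta)^{p/2}$, hence a H\"{o}lder constant of order $\gamma^{|v|}$ uniformly over $v$ (Theorem \ref{Kolmogorov_Cascade}), and the geometric factor $\gamma^{|v|}$ is what makes the final sum over the tree converge --- no moment of $\Sigma_u$ is ever needed. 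To repair your proof you would either have to supply a genuine proof of the all-moments bound on $\Sigma_u$ (which I do not believe follows from the estimates available in this paper), or import a localization of this kind; with the stopping times of Lemma \ref{stopping_Time} in hand, your flow-sum bound for $d_W$ combined with the per-vertex H\"{o}lder estimate closes the argument essentially as in the paper's proof of Theorem \ref{w_Holder}. A minor additional point: Kolmogorov--Chentsov produces a H\"{o}lder \emph{modification}, so one must also identify it with the process already constructed; the paper sidesteps this by proving the H\"{o}lder bound directly for the (already continuous) stopped coordinates.
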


The main step in the proof is to show H\"{o}lder continuity of each of the processes $\Gamma_t(v)$, for $v \in \mt$, along with a bound on the H\"{o}lder constant.

\begin{theorem}\label{Kolmogorov_Cascade}
Let $T < -2 \lambda_{\Gamma}'(1+)$. Then for any $v \in \mt$, the processes $\Gamma_t(v)$ are $\alpha$-H\"{o}lder continuous on $[0,T]$ for any $\alpha < 1/2$. Moreover, there is a $\gamma < 1$ such that,
\begin{align*}
\sup_{v \in \mt} \sup_{0 \leq s < t \leq T  }  \gamma^{-|v|} \frac{|\Gamma_{t}(v) - \Gamma_{s}(v)|}{ |t-s|^{\alpha}} < \infty
\end{align*}
almost surely.
\end{theorem}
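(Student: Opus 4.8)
The plan is to apply the Kolmogorov continuity criterion to each process $t \mapsto \Gamma_t(v)$, and then to track carefully how the $L^h$-moment bounds on the increments depend on the generation $|v|$ so that the uniform-in-$v$ statement (with the geometric factor $\gamma^{-|v|}$) follows. The key input will be the SDE representation of $\Gamma_t(v)$ from Corollary \ref{corollary:total_mass_SDE}, together with the uniform moment control of Corollary \ref{cascade_Moment}, Lemma \ref{lemma: cascade_Fan} and Lemma \ref{lemma:cascade_P}. Recall that the Kolmogorov criterion states that if $\expect{|\Gamma_t(v) - \Gamma_s(v)|^h} \leq C_v |t-s|^{1+\beta}$ for some $h, \beta > 0$, then $t \mapsto \Gamma_t(v)$ has a modification that is $\alpha$-H\"older for every $\alpha < \beta/h$; since $\Gamma_t(v)$ is a continuous martingale we may use the modification-free (path-space) version and get the exponent $\alpha < 1/2$ by driving $\beta/h$ up to $1/2$ as $h \to \infty$.

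First I would estimate the $L^h$-norm of the increment $\Gamma_t(v) - \Gamma_s(v)$. Writing $\Gamma_t(v) = X_t(v)\,\mathcal{C}(\Gamma_{|v}; W_t)(v)$ via \eqref{eq:subtree_mass}, the increment is a continuous martingale in $t$ with quadratic variation controlled by the bracket computed in Corollary \ref{corollary:total_mass_SDE}. Applying the Burkholder-Davis-Gundy inequality, exactly as in the proof of Proposition \ref{prop:total_mass_SDE}, I would bound
\begin{align*}
\expect{|\Gamma_t(v) - \Gamma_s(v)|^h} \leq C_h\, \expect{\left( \int_s^t \sum_{\substack{u \in \mt(v)}} \Gamma_r(u)^2 \, dr \right)^{h/2}}.
\end{align*}
For even integer $h$ this expectation can be expanded and the uniform moment bounds of Corollary \ref{cascade_Moment} applied termwise; the point is that $\sum_{u \in \mt(v)} \expect{\Gamma_r(u)^h}$ decays geometrically in generation, so the bracket integrand has a finite $h$-th moment uniformly in $r \in [0,T]$. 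Pulling the factor $|t-s|^{h/2}$ out of the time integral (using $T < -2\lambda_{\Gamma}'(1+)$ so that all the relevant sums converge up to time $T$) yields a bound of the form $\expect{|\Gamma_t(v) - \Gamma_s(v)|^h} \leq C_v\, |t-s|^{h/2}$, which for large $h$ gives the exponent $1/2 - O(1/h)$, hence every $\alpha < 1/2$.

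The main obstacle, and the reason the theorem asserts the stronger supremum statement, is controlling the $v$-dependence of the constant $C_v$ uniformly over the whole tree. The bracket $\int_s^t \sum_{u \in \mt(v)} \Gamma_r(u)^2\, dr$ carries an overall prefactor of $X_t(v)^2 \asymp \Gamma_t(v)^2$ coming from the subtree decomposition \eqref{eq:subtree_mass}, and the associated moments $\expect{\Gamma_T(v)^h}$ decay like $e^{|v|(\lambda_\Gamma(h) + \log\expect{W_T^h})}$ by Corollary \ref{cascade_Moment}; since that exponent is strictly negative (by $W_T$-regularity, Lemma \ref{lemma:cascade_P}), this furnishes a geometric decay $C_v \leq C\,\gamma^{h|v|}$ for some $\gamma < 1$. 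To upgrade the per-vertex H\"older bounds into the uniform supremum, I would combine the Kolmogorov moment estimate with a maximal-inequality / Borel-Cantelli argument over the countably many vertices, exactly in the spirit of the proof of Theorem \ref{theorem:existence}: the geometric factor $\gamma^{h|v|}$ beats the $2^{|v|}$ growth in the number of vertices at generation $|v|$ once $h$ is large enough, so $\sum_v \gamma^{-|v|}\sup_{s<t} |\Gamma_t(v)-\Gamma_s(v)|/|t-s|^\alpha$ is summable and in particular the supremum is almost surely finite. The delicate point is choosing $h$, $\alpha$ and $\gamma$ simultaneously so that the H\"older exponent stays below $1/2$ while the generation-decay still dominates the branching growth; this interplay is where the $W_T$-regularity hypothesis is essential, and it is the step I expect to require the most care.
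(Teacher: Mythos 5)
Your overall architecture (BDG on the martingale increments, a Kolmogorov--Chentsov estimate with generation-dependent constants decaying like $\gamma^{h|v|}$, then a union/Borel--Cantelli argument over vertices) matches the shape of the paper's argument, but there is a genuine gap at the very first step: you apply the Kolmogorov criterion directly to the unstopped process $\Gamma_t(v)$ and propose to ``drive $\beta/h$ up to $1/2$ as $h\to\infty$.'' This cannot work, because the cascade measure has only finitely many moments. The moment bounds you invoke (Corollary \ref{cascade_Moment}, Lemma \ref{lemma: cascade_Fan}) are valid only for $h\in(1,h_T)$, and for the Gaussian weights of this section $\log\expect{W_T^h}=Th(h-1)/2$ grows quadratically in $h$ while $\lambda_{\Gamma}(h)$ grows at most linearly, so $\alpha_T(h)>0$ for all large $h$ and in fact $\expect{\Gamma_T(v)^h}=\infty$ beyond a critical exponent (this is the classical Kahane--Peyri\`ere moment criterion). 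Since Kolmogorov--Chentsov with exponent $1/2-1/p$ requires $p>2$ and ultimately $p\to\infty$, your expansion of $\expect{(\int_s^t\sum_u\Gamma_r(u)^2\,dr)^{h/2}}$ ``termwise'' for large even $h$ involves moments that simply do not exist. The same problem defeats the claim that $C_v\le C\gamma^{h|v|}$ for large $h$: the exponent $\lambda_\Gamma(h)+\log\expect{W_T^h}$ is \emph{positive}, not negative, once $h\ge h_T$.

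The missing idea is localization. The paper first proves (Lemma \ref{stopping_Time}) that the process $A_t=\sup_{v}\beta^{-|v|}\sup_{s\le t}\Gamma_s(v)$ is finite and continuous on $[0,T]$ for a suitable $\beta<1$ --- this itself needs a Borel--Cantelli argument, using Corollary \ref{cascade_Moment} at a single admissible $h\in(1,h_T)$, to show only finitely many vertices ever exceed the initial level $A_0$ --- and then introduces stopping times $\tau_N=\inf\{t:A_t>A_0+N\}$ with $\P(\tau_N<T)\to 0$. The stopped process satisfies the \emph{pathwise} bound $\Gamma_{t\wedge\tau_N}(u)\le CN\beta^{|u|}$, which is then used inside the BDG bracket to replace one factor of $\Gamma_r(u)^2$ by $CN\beta^{|u|}\Gamma_r(u)$ and collapse the subtree sum via the flow property $\sum_{|u|=|v|+k}\Gamma_r(u)=\Gamma_r(v)$. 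This yields a \emph{deterministic} bound $C^pN^p\beta^{p|v|}(|v|+C_\beta)^{p/2}(t-s)^{p/2}$ valid for \emph{all} $p>2$, which is what lets one send $p\to\infty$ and also makes the per-vertex H\"older constants summable against the $2^{|v|}$ branching. The theorem is then recovered from the stopped versions by letting $N\to\infty$. Without some such truncation or stopping device, the high-moment estimates your proposal rests on are unavailable.
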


We use the following version of the Kolmogorov-Chentsov Theorem, which gives a bound on the magnitude of the H\"{o}lder constant. For a statement of this theorem see \cite[Theorem 2.2.8]{KS91}. The statement on the control of the H\"{o}lder constant is implicit in their proof.

\begin{theorem} [Kolmogorov-Chentsov Theorem] \label{Kolmogorov}
Let $X_t$ be a continuous, stochastic process on $[0,T]$ such that for all $t, s \leq T$,
\begin{align*}
\E \left| X_{t} - X_s\right|^{p} < K_p |t-s|^{p/2}
\end{align*}
for some $p>2$ and some constant $K_p$. Then $X_t$ is $\alpha$-H\"{o}lder continuous  for every $\alpha <1/2 -  1/p$. Moreover,
\begin{align}
\P \left(\sup_{0 \leq s < t \leq T  }  \frac{|X_{t} - X_{s}|}{|t-s|^{\alpha}}> 1 \right) \leq K_p H_{\alpha}, \label{p_H_bound}
\end{align}
where $H_{\alpha}$ is a constant depending only on $\alpha$ and $T$.
\end{theorem}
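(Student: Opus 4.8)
The plan is to carry out the classical dyadic chaining argument, organized so that the quantitative bound \eqref{p_H_bound}, and in particular its linear dependence on $K_p$, emerges at the very end. I would first reduce to $T = 1$ by rescaling time, which multiplies the hypothesized constant by a factor $T^{p/2}$ that is ultimately absorbed into the $T$-dependence of $H_\alpha$. Fix $\alpha < 1/2 - 1/p$, let $D_n = \{ k 2^{-n} : 0 \le k \le 2^n \}$ be the level-$n$ dyadic points, set $D = \bigcup_n D_n$, and define the maximal level-$n$ increment $\Delta_n := \max_{1 \le k \le 2^n} |X_{k 2^{-n}} - X_{(k-1) 2^{-n}}|$. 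The one computation I need at the outset is a moment bound for a single level: since $\E[\Delta_n^p] \le \sum_{k=1}^{2^n} \E|X_{k 2^{-n}} - X_{(k-1) 2^{-n}}|^p \le 2^n K_p (2^{-n})^{p/2} = K_p 2^{n(1 - p/2)}$, I get $\| \Delta_n \|_{L^p} \le K_p^{1/p} 2^{n(1/p - 1/2)}$.

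Next I would establish the chaining bound. A standard induction on the common dyadic level shows that for all $s, t \in D$ with $s < t \le s + 2^{-m}$ one has $|X_t - X_s| \le 2 \sum_{n > m} \Delta_n$; the base case $s, t \in D_{m+1}$ is immediate (at most one intermediate midpoint) and the induction extends it to all finer levels. Given $s < t$, I choose $m$ with $2^{-(m+1)} < t - s \le 2^{-m}$, so that $(t-s)^{-\alpha} < 2^{\alpha(m+1)}$, and obtain
\[
\frac{|X_t - X_s|}{(t-s)^\alpha} \le 2^{1+\alpha} \cdot 2^{\alpha m} \sum_{n > m} \Delta_n \le 2^{1+\alpha} S, \qquad S := \sum_{n \ge 0} 2^{\alpha n} \Delta_n,
\]
uniformly over dyadic pairs $s < t$. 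Since $X$ is assumed continuous and $D$ is dense, every ratio $|X_t - X_s|/(t-s)^\alpha$ with $s<t$ arbitrary is a limit of dyadic ratios, so the full $\alpha$-H\"older seminorm $[X]_\alpha := \sup_{s < t} |X_t - X_s|/(t-s)^\alpha$ also satisfies $[X]_\alpha \le 2^{1+\alpha} S$.

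It then remains to control $S$ in $L^p$ and convert to probability. By Minkowski's inequality together with the single-level bound,
\[
\| S \|_{L^p} \le \sum_{n \ge 0} 2^{\alpha n} \| \Delta_n \|_{L^p} \le K_p^{1/p} \sum_{n \ge 0} 2^{n(\alpha + 1/p - 1/2)} = K_p^{1/p} C_\alpha,
\]
where the geometric series converges precisely because $\alpha < 1/2 - 1/p$, and $C_\alpha := (1 - 2^{\alpha + 1/p - 1/2})^{-1}$ depends only on $\alpha$ and $p$. Hence $\E[S^p] \le K_p C_\alpha^p < \infty$, so $S < \infty$ and therefore $[X]_\alpha < \infty$ almost surely, which is exactly $\alpha$-H\"older continuity. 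For the quantitative estimate I use $\{ [X]_\alpha > 1 \} \subseteq \{ S > 2^{-(1+\alpha)} \}$ and Markov's inequality:
\[
\P\left( [X]_\alpha > 1 \right) \le 2^{p(1+\alpha)} \E[S^p] \le K_p \cdot 2^{p(1+\alpha)} C_\alpha^p,
\]
which is \eqref{p_H_bound} with $H_\alpha := 2^{p(1+\alpha)} C_\alpha^p$ (together with the suppressed $T^{p/2}$ from the rescaling) depending only on $\alpha$, the fixed exponent $p$, and $T$.

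The routine part is the chaining induction; the one point requiring care is keeping the dependence on $K_p$ genuinely linear, which is what makes the stated form of \eqref{p_H_bound} come out. This is exactly why I would route the estimate through the $L^p$ norm of $S$ rather than through a union bound over levels: because $\| \Delta_n \|_{L^p}$ scales like $K_p^{1/p}$, Minkowski gives $\E[S^p] \propto K_p$, and the final Markov step then produces a bound proportional to $K_p$ with all remaining constants depending only on $(\alpha, p, T)$.
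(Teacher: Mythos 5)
Your chaining proof is correct, but note that the paper itself does not prove Theorem \ref{Kolmogorov} at all: it cites \cite[Theorem 2.2.8]{KS91} and asserts that the control \eqref{p_H_bound} on the H\"older constant is implicit in that proof. So your write-up is a self-contained substitute rather than a parallel of an in-paper argument. Mathematically it is the same dyadic mechanism as Karatzas--Shreve, organized differently: they run a level-by-level Chebyshev/union bound against thresholds $2^{-\gamma n}$ and then patch together a local modulus of continuity, whereas you push everything through the single random variable $S = \sum_n 2^{\alpha n}\Delta_n$ and one application of Minkowski plus Markov. Your route is the cleaner one for the purpose at hand, since the linear dependence of \eqref{p_H_bound} on $K_p$ falls out in a single line; the chaining induction, the choice of $m$ with $2^{-(m+1)} < t-s \le 2^{-m}$, and the extension from dyadic times by continuity are all handled correctly, and the rescaling to $T=1$ is fine (the exact power that appears is $T^{p(1/2-\alpha)}$ once you account for the $T^{-\alpha p}$ from rescaling the seminorm, not $T^{p/2}$, but this is harmless).

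The one substantive discrepancy is the constant: your $H_\alpha = 2^{p(1+\alpha)} C_\alpha^p\, T^{p(1/2-\alpha)}$ grows exponentially in $p$, while the statement asserts $H_\alpha$ depends only on $\alpha$ and $T$. This is not cosmetic here, because in the proof of Lemma \ref{stop_Holder} the bound \eqref{p_H_bound} is invoked for fixed $\alpha$ and then $p \to \infty$ with $H_\alpha$ held fixed, so the claimed $p$-uniformity is actually used. In fairness, with the threshold normalized to $1$ the exponential $p$-dependence is intrinsic to this style of argument: the Karatzas--Shreve proof yields a $p$-uniform probability bound only at the threshold $\delta_\alpha = 2/(1-2^{-\alpha})$, and rescaling that threshold down to $1$ by homogeneity reintroduces a factor $\delta_\alpha^p$. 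Two repairs are available, and either suffices downstream: (a) keep your proof and allow $H_\alpha$ to depend on $p$, then recheck the application --- in Lemma \ref{stop_Holder} the prefactor $K_p^N(v)$ contains $(\beta/\gamma)^{p|v|}$, so $K_p^N(v) H_\alpha$ is of the form $A^p$ with base $A < 1$ once $|v|$ exceeds a threshold $M$ now depending also on $\alpha$ and $T$, and the conclusion survives; or (b) restate \eqref{p_H_bound} with threshold $\delta_\alpha$ in place of $1$, which your argument delivers with genuinely $p$-uniform constants. As written, your proposal proves a slightly weaker statement than the literal one, and you should flag this rather than let the reader assume $H_\alpha$ is $p$-free.
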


To prove Theorem \ref{Kolmogorov_Cascade} we restrict the process to a sequence of stopping times, prove H\"{o}lder continuity of these stopped process, and then take a limit. We construct these stopped processes in the following lemma. Note that in this section, and this section only, the notation $\Gamma_t^{(N)}$ refers to the stopped version of the $\Gamma_t$ process, not to the finite level cascade measure $\Gamma_t^{(n)}$ as in all other sections.

\begin{lemma} \label{stop_Holder}
Let $T <  -2 \lambda_{\Gamma}'(1+)$. Then there is a $\gamma < 1$ and a sequence of measure-valued processes $\Gamma_t^{(N)}$ for $N \in \mathbb{N}$, such that
\begin{enumerate}[(i)]
\item $\P(\Gamma_t^{(N)} \neq \Gamma_t \text{ for some } t \leq T)  \to 0$ as $N \to \infty$,
\item for every $v \in \mt$, $\Gamma_t^{(N)}(v)$ is $\alpha$-H\"{o}lder on $[0,T]$ for any $\alpha < 1/2$,
\item for  $\alpha< 1/2$, we have with probability one that,
\begin{align*}
\sup_{v \in \mt}   \sup_{0 \leq s < t \leq T} \gamma^{-|v|} \frac{|\Gamma_{t}^{(N)}(v) - \Gamma_{s}^{(N)}(v)|}{|t-s|^{\alpha}} < \infty.
\end{align*}
\end{enumerate}
\end{lemma}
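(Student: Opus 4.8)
The plan is to build the stopped processes by cutting the evolution off precisely when the masses $\Gamma_t(v)$ cease to decay geometrically in the generation $|v|$, to exploit the explicit quadratic variation from Corollary \ref{corollary:total_mass_SDE} to obtain a \emph{deterministic} bound on the quadratic variation over the stopped interval, and then to feed this into the quantitative Kolmogorov--Chentsov estimate \eqref{p_H_bound}, summed over vertices. Recall from Corollary \ref{corollary:total_mass_SDE} that $\Gamma_t(v)$ is a continuous martingale (Theorem \ref{theorem:existence}) whose quadratic variation satisfies $d \langle \Gamma(v) \rangle_t = \Gamma_t(v)^2 \sigma_t(v)^2 \, dt$, where $\sigma_t(v)^2 := |v| + \sum_{u \in \mt(v), u \neq v} (\Gamma_t(u)/\Gamma_t(v))^2$, since the Brownian motions attached to distinct vertices are independent. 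Expanding gives $\Gamma_t(v)^2 \sigma_t(v)^2 = |v| \Gamma_t(v)^2 + \sum_{u \in \mt(v), |u| > |v|} \Gamma_t(u)^2$, so the whole difficulty is to control the tail sum of squared masses in each subtree. Here I would use two elementary facts: masses decrease along the tree, $\Gamma_t(u) \leq \Gamma_t(v)$ for $u \in \mt(v)$, and $\sum_{u \in \mt(v), |u| = m} \Gamma_t(u) = \Gamma_t(v)$, which together yield $\sum_{|u| = m, u \in \mt(v)} \Gamma_t(u)^2 \leq \Gamma_t(v) \max_{|u|=m, u \in \mt(v)} \Gamma_t(u)$.

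Next I fix $h \in (1, h_T)$ so that $\alpha_T(h) < 0$, choose $\gamma_0 \in (e^{\alpha_T(h)/h}, 1)$, and define $D_t := \sup_{v \in \mt} \gamma_0^{-|v|} \Gamma_t(v)$ together with the stopping times $\tau_N := \inf\{ t : D_t > N \}$ and the stopped measures $\Gamma_t^{(N)} := \Gamma_{t \wedge \tau_N}$. On the event $\{ t \leq \tau_N \}$ one has $\Gamma_t(v) \leq N \gamma_0^{|v|}$ for every $v$, so the estimates above give the deterministic bound $\Gamma_t(v)^2 \sigma_t(v)^2 \leq N^2 \gamma_0^{2|v|} ( |v| + (1-\gamma_0)^{-1} )$. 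Choosing any $\gamma \in (\gamma_0, 1)$ and writing $q := (\gamma_0/\gamma)^2 < 1$, optional stopping gives for the rescaled stopped martingale $\gamma^{-|v|} \Gamma_t^{(N)}(v)$ the increment bound $\langle \gamma^{-|v|} \Gamma^{(N)}(v) \rangle_t - \langle \gamma^{-|v|} \Gamma^{(N)}(v) \rangle_s \leq N^2 q^{|v|} ( |v| + C ) (t-s)$ deterministically, with $C$ absolute.

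With deterministic control of the quadratic variation in hand, the Burkholder--Davis--Gundy inequality upgrades this to a moment bound of every order: for all $p \geq 2$, $\expect{ | \gamma^{-|v|} (\Gamma_t^{(N)}(v) - \Gamma_s^{(N)}(v)) |^p } \leq C_p N^p q^{p|v|/2} (|v|+C)^{p/2} |t-s|^{p/2}$. This is exactly the hypothesis of Theorem \ref{Kolmogorov}, which immediately gives statement (ii) with any $\alpha < 1/2 - 1/p$, hence all $\alpha < 1/2$ by letting $p \to \infty$. For statement (iii), the quantitative bound \eqref{p_H_bound} applied to $\gamma^{-|v|} \Gamma_t^{(N)}(v)$ gives $\P( \sup_{0 \leq s < t \leq T} \gamma^{-|v|} |\Gamma_t^{(N)}(v) - \Gamma_s^{(N)}(v)| / |t-s|^{\alpha} > 1 ) \leq C_p N^p H_\alpha \, q^{p|v|/2} (|v|+C)^{p/2}$. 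Summing over $v$, and using that there are $2^n$ vertices at level $n$, the series converges as soon as $2 q^{p/2} < 1$, which holds for $p$ large since $q < 1$; a Borel--Cantelli argument then makes the weighted Hölder constant finite for all but finitely many $v$ simultaneously, and the remaining finitely many are handled by (ii).

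The step I expect to be the main obstacle is statement (i), namely $\P(\tau_N < T) \to 0$, which is equivalent to the uniform-in-time geometric mass decay $\sup_{0 \leq t \leq T} D_t < \infty$ almost surely. To prove it I would apply Doob's maximal $L^h$ inequality to the nonnegative martingale $\Gamma_t(v)$, obtaining $\expect{ \sup_{t \leq T} \Gamma_t(v)^h } \leq (h/(h-1))^h \expect{ \Gamma_T(v)^h }$, and then sum the rescaled versions via Corollary \ref{cascade_Moment}: $\sum_{v} \gamma_0^{-h|v|} \expect{ \sup_{t \leq T} \Gamma_t(v)^h } \leq C \sum_n \gamma_0^{-hn} \sum_{|v|=n} \expect{ \Gamma_T(v)^h }$, whose summand decays like $e^{n(\alpha_T(h) - h \log \gamma_0 + \epsilon)}$ and is summable precisely because $\gamma_0$ was chosen so that $\log \gamma_0 > \alpha_T(h)/h$. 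Markov's inequality plus Borel--Cantelli then give $\gamma_0^{-|v|} \sup_{t \leq T} \Gamma_t(v) \leq 1$ for all but finitely many $v$, whence $\sup_{t \leq T} D_t < \infty$ almost surely and $\P(\tau_N < T) = \P(\sup_{t \leq T} D_t > N) \to 0$. The delicate point here is that the moment estimate of Corollary \ref{cascade_Moment} is available only for exponents $h < h_T$, which may lie below $2$; it is exactly the stopping construction that bootstraps these low-moment estimates into the arbitrarily high moments required by Kolmogorov--Chentsov to reach Hölder exponents arbitrarily close to $1/2$.
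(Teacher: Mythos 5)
Your proposal is correct and follows essentially the same route as the paper: stop the process when the geometrically weighted supremum $\sup_v \beta^{-|v|}\Gamma_t(v)$ exceeds a threshold (justified via Doob's maximal inequality, Corollary \ref{cascade_Moment}, and Borel--Cantelli), use the flow property together with the stopped mass bound to get a deterministic quadratic variation estimate, and then feed the resulting all-order BDG moment bounds into the quantitative Kolmogorov--Chentsov estimate with the geometric factor $(\beta/\gamma)^{p|v|}$ controlling the sum over vertices. The only differences are cosmetic (you invoke Borel--Cantelli over vertices where the paper sends $p \to \infty$ to get probability zero per vertex, and you bypass the paper's explicit continuity argument for the running supremum process).
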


\begin{remark}
Theorem \ref{Kolmogorov_Cascade} follows immediately from this lemma.
\end{remark}

The processes $\Gamma_t^{(N)}$ will be $\Gamma_t$ stopped at an appropriate stopping time. We construct these stopping times in the following lemma.

\begin{lemma} \label{stopping_Time}
For any $T<   -2 \lambda_{\Gamma}'(1+)$ there is a $\beta < 1$ and a sequence of stopping times $\tau_N$ with $\P(\tau_N < T) \to 0$ as $N \to \infty$ such that,
\[
\sup_{v \in \mt} \sup_{0 \leq t \leq T} \beta^{-|v|} \Gamma_{t \wedge \tau_N} (v) \leq C N,
\]
for some constant $C$ depending on $\Gamma$ and $\beta$.
\end{lemma}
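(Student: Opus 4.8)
The plan is to construct the stopping times $\tau_N$ so that they control the maximal growth of $\Gamma_{t}(v)$ across all vertices $v$ and all times $t \in [0,T]$ simultaneously. The natural choice is to define
\begin{align*}
\tau_N := \inf \left\{ t \geq 0 : \sup_{v \in \mt} \beta^{-|v|} \Gamma_t(v) > CN \right\} \wedge T
\end{align*}
for a suitable $\beta < 1$ and constant $C$; by construction the stopped process automatically satisfies the stated bound, so the real content is to show that $\tau_N < T$ has vanishing probability. First I would fix an exponent $h \in (1, h_T)$ (which exceeds $1$ by Remark \ref{remark:h_time}, since $T < -2\lambda_{\Gamma}'(1+)$ keeps us strictly inside the regular regime), and estimate the probability that the sup ever exceeds the threshold using a union bound over vertices combined with Doob's maximal inequality applied to the martingales $t \mapsto \Gamma_t(v)$ (martingales by Theorem \ref{theorem:existence}(i)).

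The key computation is controlling $\sum_{v \in \mt} \beta^{-h|v|} \expect{\Gamma_T(v)^h}$ and showing it is finite for an appropriate choice of $\beta$. By Corollary \ref{cascade_Moment} we have
\begin{align*}
\limsup_{n \to \infty} \frac{1}{n} \log \sum_{|v|=n} \expect{\Gamma_T(v)^h} \leq \alpha_T(h) < 0,
\end{align*}
so the generation-$n$ sum decays geometrically like $e^{n \alpha_T(h)}$. Incorporating the $\beta^{-h|v|}$ weight multiplies the $n$-th generation contribution by $\beta^{-hn}$, giving a geometric factor $\beta^{-hn} e^{n\alpha_T(h)}$. The plan is to choose $\beta < 1$ close enough to $1$ that $-h\log\beta + \alpha_T(h) < 0$, which is possible precisely because $\alpha_T(h) < 0$ strictly; then the full sum over all vertices converges. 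Writing $S := \sum_{v \in \mt} \beta^{-h|v|} \expect{\Gamma_T(v)^h} < \infty$, a union bound and Doob's $L^h$ maximal inequality yield
\begin{align*}
\P(\tau_N < T) \leq \P \left( \sup_{v} \sup_{t \leq T} \beta^{-|v|} \Gamma_t(v) > CN \right) \leq \sum_{v \in \mt} \frac{\expect{\sup_{t \leq T} \Gamma_t(v)^h}}{\beta^{h|v|}(CN)^h} \leq \left(\frac{h}{h-1}\right)^h \frac{S}{(CN)^h},
\end{align*}
which tends to zero as $N \to \infty$. This establishes the probability statement with the same $\beta$ that appears in the stopped bound.

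The main obstacle is ensuring the weight $\beta^{-h|v|}$ can be absorbed while keeping the sum finite, i.e. verifying that one can slip a genuine exponential factor $\beta^{-|v|}$ with $\beta < 1$ into the bound without destroying summability. This works only because the decay rate $\alpha_T(h)$ is \emph{strictly} negative, leaving room to spare; the quantitative slack is exactly what lets us choose $\beta < 1$ satisfying $-h\log\beta + \alpha_T(h) < 0$. A secondary technical point is that Doob's maximal inequality requires the continuous martingale structure of $t \mapsto \Gamma_t(v)$ on the compact interval $[0,T]$, which is supplied by Theorem \ref{theorem:existence}; since there are countably many vertices, the union bound and the resulting almost-sure statements are unproblematic. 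Note that the exponent $\gamma$ in Lemma \ref{stop_Holder} need not equal $\beta$ here, so there is no difficulty reconciling the two; the present lemma simply furnishes the uniform mass bound that the subsequent Kolmogorov-Chentsov argument will consume.
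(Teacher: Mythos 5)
Your argument is correct and rests on the same core estimate as the paper's proof --- namely that Corollary \ref{cascade_Moment} gives $\sum_{|v|=n}\expect{\Gamma_T(v)^h}\lesssim e^{n(\alpha_T(h)+\gamma)}$ with $\alpha_T(h)<0$ strictly, leaving room to absorb a factor $\beta^{-h|v|}$ once $\log\beta>\alpha_T(h)/h$, combined with Doob's maximal inequality for the martingales $t\mapsto\Gamma_t(v)$ --- but you package it differently. The paper introduces the running-maximum process $A_t=\sup_v\beta^{-|v|}\sup_{s\le t}\Gamma_s(v)$, uses Borel--Cantelli to show the supremum is attained on a finite random set of vertices and hence that $A_t$ is continuous, defines $\tau_N$ as the hitting time of $A_0+N$, and deduces $\P(\tau_N<T)\to 0$ from almost-sure boundedness of $A_T$. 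You instead sum Chebyshev--Doob bounds over \emph{all} vertices to get the quantitative estimate $\P(\tau_N<T)\le (h/(h-1))^h S/(CN)^h$, which is more direct and even yields a rate; what you lose is the structural fact that $A_t$ is continuous. One point you should not call ``automatic'': the bound $\sup_v\beta^{-|v|}\Gamma_{\tau_N}(v)\le CN$ \emph{at} the stopping time is not immediate, because a supremum over infinitely many vertices could in principle jump strictly above the threshold at $\tau_N$ even though each $\Gamma_t(v)$ is continuous. It does hold, since a supremum of continuous functions is lower semicontinuous, so taking $t_k\uparrow\tau_N$ with $\sup_v\beta^{-|v|}\Gamma_{t_k}(v)\le CN$ forces the same bound at $\tau_N$; the paper's continuity of $A_t$ is its way of handling exactly this issue, and you should add the corresponding sentence. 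You also implicitly need $\tau_N>0$, i.e.\ $\sup_v\beta^{-|v|}\Gamma(v)<\infty$, which follows from finiteness of your sum $S$ together with Jensen's inequality ($\Gamma(v)^h\le\expect{\Gamma_T(v)^h}$), and should be folded into the choice of $C$.
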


\begin{proof}
Fix $T<  -2 \lambda_{\Gamma}'(1+)$. Now recalling Definition \ref{h_time} and Remark \ref{remark:h_time}, we take $h \in (1, h_T)$ and note that $\alpha_T(h) < 0$. We can therefore choose $\beta$ such that $\alpha_T(h)/h < \log \beta < 0$; hence $\beta < 1$. Consider the continuous, increasing processes
\begin{align*}
A_t(v) := {\beta^{-|v|}} \sup_{0 \leq s \leq t } \Gamma_s(v).
\end{align*}
It follows from the continuity of $\Gamma_t(v)$ that $A_{t}(v)$ is bounded on $[0,T]$ for every $v \in \mt$. Now define
\begin{align}\label{defn:At}
A_t := \sup_{v \in \mt} A_t(v).
\end{align}
It follows  from our choice of $\beta$ and the definition of $\alpha_T(h)$ that $A_0$ is non-random and finite. Clearly $A_t$ is a non-decreasing process. Note that the statement of the lemma is equivalent to finding a sequence of stopping times $\tau_N$ such that $A_{t \wedge \tau_N} \leq A_0 + N$ and with $\P(\tau_N < T) \to 0$ as $N \to \infty$. This will follow from the fact that $A_t$ is continuous on $[0,T]$, which we now show. Using Markov's inequality as well as Doob's $L^p$ inequality we get that
\begin{align*}
 \P \left( A_T(v) \geq A_0 \text{ for some } |v| = n \right)
& \leq A_0^{-h} \beta^{-nh} \sum_{|v| = n}  \E \left[  \Gamma_T(v)^h \right].
\end{align*}
By  Corollary \ref{cascade_Moment} and the choice of $\beta$,
\begin{align*}
\limsup_{n \to \infty} \frac{1}{n}  \log  \P \left( A_T(v) \geq A_0 \text{ for some } |v| =n \right)
\leq -h \log \beta +  \alpha_T(h)
 < 0.
\end{align*}
Therefore by Borel-Cantelli,
\begin{align*}
\P \left( \sup_{0 \leq t \leq T} A_t(v) \geq A_0 \text{  for only finitely many } v \in \mt  \right) = 1.
\end{align*}
Take $S = \{ v \in \mt : A_{T}(v) \geq A_0 \}$ to be the (random) set of vertices from the tree at which this inequality fails; clearly $S$ is finite. Moreover, since the processes $A_t(v)$ and $A_t$ are non-decreasing in $t$, it follows that the supremum in \eqref{defn:At} can only be achieved at one of the vertices of $S$, i.e.
\begin{align*}
A_t = \max_{v \in S} A_t(v).
\end{align*}
Hence $A_t$ is itself continuous on $[0,T]$, since it is the maximum of a finite number of continuous processes. We then take
\[
\tau_{N} : = \inf \left \{ t \in [0,T] \, : A_t > A_0 + N \right \}
\]
to be our sequence of stopping times. The continuity of $A_t$ implies that $A_{t \wedge \tau_n} \leq A_0 + N$ as well as the fact that $A_t$ is almost surely bounded on $[0,T]$. The boundedness on $[0,T]$ also gives that $\P(\tau_N < T) \to 0$ as $N \to \infty$.
\end{proof}

Proving Lemma \ref{stop_Holder} now becomes an application of the Kolmogorov-Chentsov Theorem.

\begin{proof}[Proof of Lemma \ref{stop_Holder}]
First take  $\beta < 1$ and $\tau_N$ as in Lemma \ref{stopping_Time} and define $\Gamma_t^{(N)} := \Gamma_{t \wedge \tau_N}$ to be the stopped version of the measure-valued process. From  Lemma \ref{stopping_Time}, part (i) of this lemma is immediate.

Next, recall that by Corollary \ref{corollary:total_mass_SDE}
\begin{align*}
d\Gamma_t(v) = \sum_{i=1}^{|v|} \Gamma_t(v) \, dB_t(v_i) + \sum_{\substack{u \in \mt(v) \\ u \neq v}} \Gamma_t(u) \, dB_t(u).
\end{align*}
Now fix $v \in \mt$, $p>2$  and take any $0 \leq s < t \leq T$.  We apply the Burkholder-Davis-Gundy inequality, use the bound on the process $\Gamma_t^{(N)}$ from Lemma \ref{stopping_Time}, and the fact that it is a flow on $\mt$ to get
\begin{align*}
\E \left| \Gamma_t^{(N)}(v) - \Gamma_s^{(N)}(v)  \right|^p
&\leq \E \left( \sum_{i=1}^{|v|} \int_{s \wedge \tau_n}^{t \wedge \tau_n } \Gamma_{r}(v)^2 \, dr  + \sum_{\substack{u \in \mt(v) \\ u \neq v }} \int_{s \wedge \tau_N}^{t \wedge \tau_n } \Gamma_r(v)^2 \, dr \right)^{p/2}\\
& \leq \E \left( \sum_{i=1}^{|v|}C  N \beta^{|v|} \int_{s \wedge \tau_n}^{t \wedge \tau_n } \Gamma_{r}(v) \, dr  + \sum_{k=1}^{\infty}\sum_{ \substack{u \in \mt(v) \\ u = |v| +k }}  N \beta^{|u|} \int_{s \wedge \tau_N}^{t \wedge \tau_n } \Gamma_r(u) \, dr \right)^{p/2}\\
&= \E \left( |v| C N \beta^{|v|} \int_{s \wedge \tau_n}^{t \wedge \tau_n }\Gamma_{r}(v) \, dr + \sum_{k=1}^{\infty} N \beta^{|v|+k} \int_{s \wedge \tau_N}^{t \wedge \tau_n } \Gamma_r(v) \, dr    \right)^{p/2}.
\end{align*}
Now again use the upper bound $\Gamma^{(N)}_t(v) \leq C N \beta^{|v|}$ and the fact that $\beta < 1$ to get the desired Kolmogorov-Chentsov inequality,
\begin{align}
\E \left| \Gamma^{(N)}_{t}(v) - \Gamma_s^{(N)}(v)  \right|^p
& \leq \E \left( C^2 N^2 \beta^{2|v|} (|v| + C_{\beta}) \int=_{s \wedge \tau_N}^{t \wedge \tau_N} \text{d}s \right)^{p/2}  \notag \\
& \leq C^p N^p \beta^{p|v|}(|v| + C_{\beta})^{p/2} (t-s)^{p/2} \label{Lp_KC},
\end{align}
where $C_\beta$ is a constant depending only on $\beta$. Since this inequality holds for every $p>2$,we get that for every $v \in \mt$, $\Gamma_t^{(N)}(v)$ is $\alpha$-H\"{o}lder continuous for any $\alpha < 1/2$.

Finally, fix $\alpha < 1/2$ and take $\gamma \in (\beta, 1)$. The $L^p$ bound \eqref{Lp_KC}, applied to the process $\gamma^{-|v|} \Gamma_t^{(N)}(v)$, along with the Kolmogorov-Chentsov bound \eqref{p_H_bound} gives that
\begin{align}
\P\left(\sup_{0 \leq s < t \leq T} \gamma^{-|v|} \frac{| \Gamma_{t}^{(N)}(v) - \Gamma_s^{(N)}(v)|}{ |t-s|^{\alpha}}  > 1 \right) < K_p^N(v) H_{\alpha}, \label{ptH_bound}
\end{align}
where $K_p^N(v) = C^p N^p (\beta/\gamma) ^{p |v|} (|v| + C_{\beta})^{p/2}$. Notice that $K_p^N(v) \to 0$ as $p \to \infty$, at least for all $|v| > M$ where $M > 0$ depends on only $\beta$, $\gamma$, and $N$. Therefore, since \eqref{ptH_bound} is true for every $p > 2$, it follows that for all $|v| > M$ we have
\begin{align*}
\P \left( \sup_{0 \leq s < t \leq T} \gamma^{-|v|} \frac{| \Gamma_{t}^{(N)}(v) - \Gamma_s^{(N)}(v)|}{ |t-s|^{\alpha}} > 1 \right) = 0,
\end{align*}
which implies part (iii) of the lemma.
\end{proof}

The last ingredient in the proof of Theorem \ref{w_Holder} is the following upper bound on the Wasserstein distance on $M(\mt)$.

\begin{lemma} \label{w_UpBound}
Let $\mu$, $\nu$ $\in M(\mt)$ be such that for every $v \in \mt$ we have $\mu(v), \nu(v) > 0$. Then
\begin{align*}
d_W(\mu, \nu) \leq \sum_{k=1}2^{-k+1} \sum_{|v|=k-1} \nu(v) \left|   \frac{\nu(v_L)}{\nu(v)} -  \frac{\mu(v_L)}{\mu(v)}\right|.\
\end{align*}
\end{lemma}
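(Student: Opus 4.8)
The plan is to bound $d_W(\mu,\nu)$ from above by exhibiting a single explicit coupling $\rho$ of $\mu$ and $\nu$ and estimating $\int d\,d\rho$; since $d_W$ is an infimum over couplings, any such $\rho$ furnishes an upper bound. The coupling I would use is the natural greedy one that keeps the two rays on a common path for as long as possible. For each vertex $v$ write $p_\mu(v) = \mu(v_L)/\mu(v)$ and $p_\nu(v) = \nu(v_L)/\nu(v)$ for the conditional probabilities of branching left; these are well defined since $\mu(v),\nu(v)>0$. Generate a pair of rays $(Z,H)$ top-down: at the root, and at every vertex the two rays currently share, decide their next step using the maximal coupling of the Bernoulli laws with parameters $p_\mu(v)$ and $p_\nu(v)$, so that they move in the same direction with probability $1-|p_\mu(v)-p_\nu(v)|$ and split otherwise; once they have split, continue each ray independently according to the corresponding conditional law. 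A consistency (Kolmogorov-extension) argument over the finite-generation marginals shows this prescription defines a genuine probability measure $\rho$ on $\partial\mt\times\partial\mt$.

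The first thing to check is that $\rho$ is a coupling of $\mu$ and $\nu$. Because at every vertex the marginal branching law of $Z$ is exactly that governed by $p_\mu(v)$ (and of $H$ by $p_\nu(v)$), regardless of how the two are coupled, an induction on $|v|$ gives $\P_\rho(Z \text{ passes through } v) = \mu(v)$ and $\P_\rho(H \text{ passes through } v) = \nu(v)$; hence $Z\sim\mu$ and $H\sim\nu$.

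The heart of the argument is the cost computation, which is clean precisely because the metric is the ultrametric $d(\zeta,\eta)=2^{-|\zeta\wedge\eta|}$. Under the greedy coupling the two rays remain together until the first vertex at which they split; if that vertex is $v$, then the tree structure forces $Z\wedge H = v$, so $d(Z,H) = 2^{-|v|}$. Therefore
\begin{align*}
\int d\,d\rho = \sum_{v\in\mt} 2^{-|v|}\, \P_\rho\!\left( Z,H \text{ reach } v \text{ together and then split} \right).
\end{align*}
Splitting at $v$ has conditional probability $|p_\mu(v)-p_\nu(v)|$, while reaching $v$ together requires agreeing and stepping toward $v$ at every ancestor, which along the path $\root=v_0,\dots,v_m=v$ has probability $\prod_{j<m}\min\bigl(p_\mu(v_j),p_\nu(v_j)\bigr)$ evaluated in the appropriate direction at each step. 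Bounding each minimum by the corresponding conditional $\nu$-probability $\nu(v_{j+1})/\nu(v_j)$ telescopes the product to $\nu(v)$, so $\P_\rho(\text{reach } v \text{ together})\le\nu(v)$ and
\begin{align*}
\int d\,d\rho \le \sum_{v\in\mt} 2^{-|v|}\,\nu(v)\,\left|\frac{\nu(v_L)}{\nu(v)}-\frac{\mu(v_L)}{\mu(v)}\right|,
\end{align*}
which is exactly the claimed bound after reindexing generations by $k=|v|+1$.

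The main point to get right—and the only place asymmetry enters—is the telescoping step: one must bound the reach-together probability against the $\nu$-ratios (rather than the $\mu$-ratios) to produce the weight $\nu(v)$ appearing in the statement; bounding against $\mu$ would give the same estimate with the roles of $\mu$ and $\nu$ interchanged. The remaining technical point is the rigorous construction of $\rho$ on the infinite product, but this is routine from the consistency of the finite-generation marginals, and convergence of the cost series is automatic since $d\le 1$ forces $\int d\,d\rho\le 1$. An essentially equivalent route, if one prefers to avoid constructing $\rho$ explicitly, is to prove the one-step recursion
\begin{align*}
d_W(\mu,\nu)\le 2^{-|w|}\,|p_\mu(w)-p_\nu(w)| + \min(p_\mu,p_\nu)\,d_W(\mu^L,\nu^L)+\min(1-p_\mu,1-p_\nu)\,d_W(\mu^R,\nu^R)
\end{align*}
at each vertex $w$, where $\mu^L,\nu^L$ (resp. $\mu^R,\nu^R$) denote $\mu,\nu$ conditioned on the left (resp. right) subtree of $w$; one then observes that the claimed right-hand side obeys the same recursion with the minima replaced by the larger $\nu$-branching probabilities $p_\nu$ and $1-p_\nu$, and closes the induction using $\min(p_\mu,p_\nu)\le p_\nu$ together with the diameter bound $2^{-N}$ on depth-$N$ subtrees to kill the remainder as $N\to\infty$.
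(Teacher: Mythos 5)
Your proof is correct and takes essentially the same approach as the paper: both exhibit the greedy (maximal) coupling that keeps the two rays together as long as possible, note that the distance is $2^{-|v|}$ when the split occurs at $v$, and telescope the stay-together probability against the $\nu$-transition ratios to obtain the weight $\nu(v)$. Your bookkeeping of the split probability as exactly $|p_\mu(v)-p_\nu(v)|$ is in fact slightly cleaner than the paper's, but the argument is the same.
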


\begin{proof}
This follows from a particular, standard coupling $\rho$ of the two measures $\mu$ and $\nu$. Given a ray $\xi \in \partial \mt$, we define the probability measure
$\nu_{\xi}$ on $\partial \mt$ via the following iterative formula:
\begin{align*}
\nu_{\xi} (\eta_k | \eta_{k-1}) :=
\begin{cases}
\frac{\nu(\eta_k)}{\nu(\eta_{k-1})} & \text{ if } \eta_{k-1} \neq \xi_{k-1} \\
p_k(\xi) & \text{ if } \eta_k = \xi_k \\
1- p_k(\xi) & \text{ if }  \eta_{k-1} = \xi_{k-1} \text{ but }  \eta_k \neq \xi_k
\end{cases}
\end{align*}
where
\begin{align*}
p_k(\xi) =  \left( \frac{\nu(\xi_k)}{\nu(\xi_{k-1})} \frac{\mu(\xi_{k-1})}{\mu(\xi_{k})} \right) \wedge 1.
\end{align*}
We define the coupling $\text{d} \rho(\xi, \eta) = \text{d}\mu(\xi) \text{d}\nu_{\xi}(\eta)$. In words the coupling is the following. We first sample a ray $\xi$ from $\mu$. Then conditioned on $\xi$ we sample $\eta$ inductively. If $\eta$ agrees with $\xi$ on the first $k-1$ steps of the path (i.e., $\eta_k = \xi_k$), then flip a $p_k(\xi)$ coin to decide if $\eta$ will agree with $\xi$ on the $k$ step. Once $\eta$ diverges from $\xi$, pick the rest of its path independently from $\nu$.

It is a matter of simple calculation to show that this is a coupling. Since $\nu_{\xi}$ is clearly a probability measure on $\partial \mt$, it follows that the first marginal is $\mu$. To compute that the second marginal is $\nu$ is straightforward.

Let $ A_k(\xi) = \{ \eta \in \partial \mt :  \eta_{k-1} =  \xi_{k-1}, \, \eta_k \neq \xi_k  \}$ be the event that $\eta$ agrees with $\xi$ exactly up to level $k$. Hence $d(\xi, \eta) = 2^{-k}$ for $\eta \in A_k(\xi)$. Furthermore
\begin{align*}
\nu_{\xi}(A_k(\xi))
&= \prod_{i=1}^{k-1} p_i(\xi) \cdot (1-p_k(\xi)) \\
& \leq  \prod_{i=1}^{k-1}  \frac{\nu(\xi_i)}{\nu(\xi_{i-1})} \frac{\mu(\xi_{i-1})}{\mu(\xi_{i})} \cdot \left| 1 -  \frac{\nu(\xi_k)}{\nu(\xi_{k-1})} \frac{\mu(\xi_{k-1})}{\mu(\xi_{k})}  \right| \\
& = \frac{\nu(\xi_{k-1})}{\mu(\xi_{k-1})}  \left| 1 -  \frac{\nu(\xi_k)}{\nu(\xi_{k-1})} \frac{\mu(\xi_{k-1})}{\mu(\xi_{k})}  \right| \\
& = \frac{\nu(\xi_{k-1} ) } {\mu(\xi_k)} \left|   \frac{\nu(\xi_{k})}{\nu(\xi_{k-1})} -  \frac{\mu(\xi_{k})}{\mu(\xi_{k-1})}\right|.
\end{align*}
The first equality follows from the definition of $\nu_{\xi}$ while the first inequality follows from the definition of $p_i(\xi)$.
A calculation now gives that
\begin{align*}
d_W(\mu, \nu)
& \leq  \int_{\partial \mt \times  \partial \mt} d(\xi, \eta) \text{d}\mu(\xi) \text{d}\nu_{\xi}(\eta) \\
& = \sum_{k=1}^{\infty} \int_{\partial \mt}\int_ {A_k(\xi)} d(\xi, \eta) \text{d}\mu(\xi) \text{d}\nu_{\xi}(\eta)\\
& = \sum_{k=1}  \int_{\partial \mt} 2^{-k} \text{d}\mu(\xi) \nu_{\xi}(A_k(\xi)) \\
& \leq \sum_{k=1 } 2^{-k} \int_{\partial \mt} \frac{\nu(\xi_{k-1} ) } {\mu(\xi_k)} \left|   \frac{\nu(\xi_{k})}{\nu(\xi_{k-1})} -  \frac{\mu(\xi_{k})}{\mu(\xi_{k-1})}\right| \text{d}\mu(\xi)  \\
&=  \sum_{k=1 } 2^{-k} \sum_{|v| = k } \nu(v_p)  \left|   \frac{\nu(v_k)}{\nu(v_p)} -  \frac{\mu(v_k)}{\mu(v_p)}\right|.
\end{align*}
Recall that $v_p$ denotes the parent of $v$ in $\mathcal{T}$. Finally, noting that $ \left|   \frac{\nu(v_L)}{\nu(v)} -  \frac{\mu(v_L)}{\mu(v)}\right| = \left|   \frac{\nu(v_R)}{\nu(v)} -  \frac{\mu(v_R)}{\mu(v)}\right|$, gives that
\[
\sum_{|v| = k } \nu(v_p)  \left|   \frac{\nu(v_k)}{\nu(v_p)} -  \frac{\mu(v_k)}{\mu(v_p)}\right| = 2 \sum_{|v|=k-1} \nu(v) \left|   \frac{\nu(v_L)}{\nu(v)} -  \frac{\mu(v_L)}{\mu(v)}\right|
\]
which completes the proof.
\end{proof}

We are finally ready to prove Theorem \ref{w_Holder}. It follows from Theorem \ref{Kolmogorov_Cascade} and Lemma \ref{w_UpBound}.

\begin{proof}[Proof of Theorem \ref{w_Holder}]
Let $0 \leq s < t \leq T$ and fix $\alpha < 1/2$. Applying  Lemma \ref{w_UpBound} to the measures $\tilde{\Gamma}_{t}$ and $\tilde{\Gamma}_s$ gives
\begin{align*}
d_W(\tilde{\Gamma}_{s}, \tilde{\Gamma}_t)
& \leq \sum_{k=1} 2^{-k+1}  \sum_{|v|=k-1} \frac{\Gamma_{s}(v)}{\Gamma_s(\root)} \left|   \frac{\Gamma_{s}(v_L)}{\Gamma_{s}(v)} -  \frac{\Gamma_{t}(v_L)}{\Gamma_{t}(v)}\right| \\
& = \frac{1}{\Gamma_s(\root)}\sum_{k=1} 2^{-k+1}  \sum_{|v|=k-1}   \left|   \frac{\Gamma_{s}(v_L) \Gamma_t(v_R)-   \Gamma_{s}(v_R) \Gamma_{t}(v_L)}{\Gamma_{t}(v)}\right|,
\end{align*}
where we have used the fact that $\Gamma_t(v) = \Gamma_t(v_L) + \Gamma_t(v_R)$ for every $t$.  Now note that by Theorem \ref{Kolmogorov_Cascade}, for every $v \in \mt$, $|\Gamma_t(v) - \Gamma_s(v)| \leq C_{\alpha} \gamma^{|v|} |t-s|^{\alpha} $ for some $\gamma < 1$. Therefore, adding and subtracting $\Gamma_t(v_R) \Gamma_t(v_L)$ gives
\begin{align*}
|\Gamma_{s}(v_L) \Gamma_t(v_R)-   \Gamma_{s}(v_R) \Gamma_{t}(v_L)|
& \leq C_{\alpha} \gamma^{|v| + 1}  \Gamma_t(v) |t-s|^{\alpha}.
\end{align*}
This inequality along with the fact that $\Gamma_s(\root)$ is bounded away from zero on $[0,T]$ leads to the $\alpha$-H\"{o}lder inequality in the Wasserstein metric,
\begin{align*}
d_W(\tilde{\Gamma}_{s}, \tilde{\Gamma}_t) &  \leq \frac{1}{\Gamma_s(\root)} \sum_{k=1} 2^{-k+1}  \sum_{|v|=k-1}   C_{\alpha} \gamma^{|v| + 1} |t-s|^{\alpha} \\
& = C_{\alpha}^{'}|t-s|^{\alpha}.
\end{align*}
\end{proof}

This result is optimal in the sense that $\tilde{\Gamma}_t$ is not $\alpha$-H\"{o}lder for any $\alpha > 1/2$ in the Wasserstein metric. This upper bound on the H\"{o}lder exponent follows from general arguments for martingales, which we now briefly outline.

\begin{theorem} \label{not_W_Holder}
For any interval $[a,b] \subset [0,-2 \lambda_{\Gamma}'(1+))$ and for any $\alpha >1/2$,
$$ \limsup_{\epsilon \to 0} \sup_{\substack{a \leq s \leq t \leq b \\ |t-s| \leq \epsilon}} \frac{d_W(\tilde{\Gamma}_t, \tilde{\Gamma}_s) }{|t-s|^{\alpha} } = \infty. $$
\end{theorem}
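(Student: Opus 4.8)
The plan is to reduce the statement to the non-Hölder regularity of a single one-dimensional coordinate process and then invoke the classical fact that a non-degenerate continuous semimartingale is nowhere $\alpha$-H\"older for $\alpha > 1/2$. First I would bound the Wasserstein distance from below by a single coordinate. Fix a vertex $v \neq \root$, say $v = \root_L$, and consider the test function $f(\xi) = 2^{-|v|}\indicate{\xi \in \partial \mt(v)}$. A direct check shows $f$ is $1$-Lipschitz for the metric $d(\xi,\eta) = 2^{-|\xi \wedge \eta|}$, since two rays that separate over $\partial \mt(v)$ must do so at a vertex of generation at most $|v|-1$. Testing any coupling of $\tilde{\Gamma}_t$ and $\tilde{\Gamma}_s$ against $f$ then yields
\begin{align*}
d_W(\tilde{\Gamma}_t, \tilde{\Gamma}_s) \ge 2^{-|v|} \left| \tilde{\Gamma}_t(v) - \tilde{\Gamma}_s(v) \right|,
\end{align*}
so it suffices to prove that $t \mapsto \tilde{\Gamma}_t(v)$ has infinite local $\alpha$-H\"older modulus for $\alpha > 1/2$.

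Next I would study the semimartingale $M_t := \tilde{\Gamma}_t(v) = \Gamma_t(v)/\Gamma_t(\root)$ through $\log M_t = \log \Gamma_t(v) - \log \Gamma_t(\root)$. Using Corollary \ref{corollary:total_mass_SDE} to read off the coefficient $a_u$ of $dB_t(u)$ in $d\log \Gamma_t(v)$ and the coefficient $b_u$ in $d\log \Gamma_t(\root)$, one gets $d\langle \log M \rangle_t = \sum_u (a_u - b_u)^2 \, dt$. Isolating the single term $u = v$ gives $a_v = 1$ and $b_v = M_t$, so the rate is bounded below by $(1-M_t)^2 > 0$. Since $M_t = \tilde{\Gamma}_t(v)$ is continuous and takes values strictly in $(0,1)$ — positivity by $W_T$-regularity of $\Gamma_{|v}$ (Remark \ref{remark:submeasure_cascade}) and the strict bound $\Gamma_t(v) < \Gamma_t(\root)$ — the quadratic variation satisfies $d\langle M \rangle_t / dt \ge M_t^2 (1-M_t)^2$, which is bounded below by a positive (random) constant on the compact interval $[a,b]$. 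This is the non-degeneracy that forces Brownian-type roughness.

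The one genuine subtlety is that $M_t$ is not a martingale, so I must control its drift. Decomposing $M_t = M_a + (\text{local martingale}) + \tfrac12 \int_a^t M_s \, d\langle \log M \rangle_s$, the drift is Lipschitz in $t$ exactly when the rate $r_s := \sum_u (a_u - b_u)^2$ is bounded on $[a,b]$, and for this I would lean on the truncation of Lemma \ref{stopping_Time}. On the event $\{\tau_N \ge b\}$ one has $\Gamma_t(u) \le C N \beta^{|u|}$ for all $u$, and together with a lower bound $\Gamma_t(\root) \ge \rho > 0$ (valid almost surely by continuity and positivity) this gives $\tilde{\Gamma}_t(u) \le C' \beta^{|u|}$; summing level by level and using $\sum_{|u|=k} \tilde{\Gamma}_t(u) = 1$ shows $\sum_u \tilde{\Gamma}_t(u)^2 \le C' \sum_k \beta^k < \infty$, so $r_s$ is bounded there. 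Since the events $\{\tau_N \ge b\} \cap \{\inf_{[a,b]} \Gamma_t(\root) \ge \rho\}$ have probability tending to one and on them the stopped process agrees with $\Gamma_t$, it is enough to argue on each such event.

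Finally I would conclude by the Dambis--Dubins--Schwarz theorem: the local martingale part of $M$ equals $\hat{B}_{\langle M \rangle_t}$ for a Brownian motion $\hat{B}$, which on the nondegenerate interval $[\langle M \rangle_a, \langle M \rangle_b]$ is, by the law of the iterated logarithm, not $\alpha$-H\"older for $\alpha > 1/2$. Because $\langle M \rangle_t$ has rate bounded below, one has $|t-s| \le c^{-1}(\langle M \rangle_t - \langle M \rangle_s)$, so the blow-up of the Brownian H\"older ratio transfers to the real-time increments; the Lipschitz drift contributes only an $O(|t-s|^{1-\alpha}) = o(1)$ correction for $\alpha < 1$, and larger exponents follow by monotonicity of $|t-s|^\alpha$. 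Combined with the coordinate lower bound, this gives the claimed infinite $\limsup$. I expect the main obstacle to be precisely the drift control in this infinite-dimensional setting: one must guarantee that the quadratic-variation rate is almost surely locally bounded so that the drift cannot smooth out the Brownian roughness, which is where the truncation of Lemma \ref{stopping_Time} and the moment estimate of Corollary \ref{cascade_Moment} are essential.
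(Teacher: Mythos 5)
Your proposal is correct and follows essentially the same route as the paper: lower-bound $d_W$ by the increment of the single coordinate $\tilde{\Gamma}_t(\root_L)$ via a Lipschitz test function, then show that this coordinate is a continuous semimartingale whose quadratic variation has a strictly positive rate (you get this by isolating the $dB_t(\root_L)$ term, giving a rate of at least $\tilde{\Gamma}_t(\root_L)^2(1-\tilde{\Gamma}_t(\root_L))^2$) and conclude via Dambis--Dubins--Schwarz. The only real divergence is in the bookkeeping: you control the drift explicitly (proving it Lipschitz on a high-probability event via the truncation of Lemma \ref{stopping_Time}) and finish with the law of the iterated logarithm, whereas the paper's Lemma \ref{not_Holder} discards the finite-variation part in one line and runs a weak-law-of-large-numbers argument on the $1/\alpha$-variation along quadratic-variation stopping times; both are valid, and your treatment of the drift is if anything more careful than the paper's.
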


\begin{proof}
Define
\[
f(\xi) =
\begin{cases}
1 \text{ if } \xi_1 = \root_L \\
0 \text{ if } \xi_1 = \root_R.
\end{cases}
\]
Since $|f(\xi) -  f(\eta)| \leq d(\xi, \eta)$ for any two rays $\xi$, $\eta$ $\in \partial \mt$, we have, using Jensen's inequality, that for any coupling $\rho$ of two probability measures, $\mu$ and $\nu$,
\begin{align*}
 \int_{\partial \mt \times \partial \mt} d(\xi, \eta) \text{d} \rho (\xi, \eta)
&\geq \int_{\partial \mt \times \partial \mt} \left| f(\xi) - f(\eta) \right| \text{d} \rho (\xi, \eta) \\
& \geq \left|  \int_{\partial \mt \times \partial \mt} f(\xi) \text{d} \rho (\xi, \eta) -  \int_{\partial \mt \times \partial \mt} f(\eta) \text{d} \rho (\xi, \eta) \right| \\
& = |\mu(\root_L) - \nu(\root_L)|.
\end{align*}
In particular, this implies that
\[
d_W(\tilde{\Gamma}_t, \tilde{\Gamma}_s) \geq \left| \tilde{\Gamma}_t(\root_L)- \tilde{\Gamma}_s(\root_L) \right|.
\]
So it remains to show that $\tilde{\Gamma}_t(\root_L)$ is not $\alpha$-H\"older for any $\alpha > 1/2$. Both $\Gamma_t(\root_L)$ and $\Gamma_t(\root)$ are non-zero and continuous on $[a,b]$  and so by Ito's formula $\tilde{\Gamma}_t(\root_L)$ is a continuous semi-martingale.  We will use the fact that a continuous semi-martingale whose quadratic variation is strictly increasing is not $\alpha$-H\"{o}lder for any $\alpha > 1/2$ (see Lemma \ref{not_Holder}). For now we only verify that the quadratic variation is strictly increasing. Since
\begin{align*}
\tilde{\Gamma}_t(\root_L) = \frac{\Gtrl}{\Gtrl + \Gtrr},
\end{align*}
by Ito's formula the martingale part of $d \tilde{\Gamma}_t$ is
\[
\frac{1}{\Gt(\root)^2}\Bigl( \Gtrr d \Gtrl - \Gtrl d \Gtrr \Bigr).
\]
From Proposition \ref{prop:total_mass_SDE} it follows that $d \, \bigl \langle \tilde{\Gamma}_t(\root_L) \bigr \rangle \neq 0$.
\end{proof}

For the sake of completeness we provide the following general fact from stochastic calculus that was used in the proof of Theorem \ref{not_W_Holder}.
\begin{lemma} \label{not_Holder}
Let $X_t$ be a continuous semi-martingale, i.e. $X = X_0 + M + A$ where $M$ is a continuous local martingale, $A$ a finite variation process, and $M_0 = A_0 = 0$.  If the quadratic variation $\langle X \rangle _t $ is strictly increasing on some interval $[a,b]$, then for any $\alpha > 1/2$ the process $X_t$ is not $\alpha$-H\"{o}lder continuous on $[a,b]$.
\end{lemma}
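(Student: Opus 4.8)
The plan is to derive a contradiction between $\alpha$-H\"older continuity with $\alpha > 1/2$ and the strict growth of the quadratic variation. First I would record that, since $A$ is a finite variation process and $M$ a continuous local martingale, the quadratic variation of $X$ coincides with that of $M$, so that $\langle X \rangle = \langle M \rangle$ is well-defined and, by hypothesis, satisfies $\langle X \rangle_b - \langle X \rangle_a > 0$. The essential fact I would invoke is the defining property of quadratic variation: for any sequence of partitions $\pi_m = \{ a = t_0^m < t_1^m < \cdots < t_{k_m}^m = b \}$ of $[a,b]$ with mesh $\| \pi_m \| \to 0$, the sums $S_m := \sum_i (X_{t_{i+1}^m} - X_{t_i^m})^2$ converge in probability to $\langle X \rangle_b - \langle X \rangle_a$.

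Next I would argue by contradiction. Fix $\alpha > 1/2$ and suppose that on an event $E$ of positive probability the path $t \mapsto X_t$ is $\alpha$-H\"older on $[a,b]$, say with a finite (random) H\"older constant $C$. On $E$ each increment obeys $|X_{t_{i+1}^m} - X_{t_i^m}| \leq C |t_{i+1}^m - t_i^m|^{\alpha}$, so that
\begin{align*}
S_m \leq C^2 \sum_i |t_{i+1}^m - t_i^m|^{2\alpha} \leq C^2 \| \pi_m \|^{2\alpha - 1} \sum_i |t_{i+1}^m - t_i^m| = C^2 (b-a) \, \| \pi_m \|^{2\alpha - 1}.
\end{align*}
Since $\alpha > 1/2$ gives $2\alpha - 1 > 0$, the right-hand side tends to $0$ as $m \to \infty$, and hence $S_m \to 0$ on $E$.

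Finally I would reconcile the two modes of convergence. Because $S_m \to \langle X \rangle_b - \langle X \rangle_a$ in probability, I can pass to a subsequence $(m_j)$ along which $S_{m_j} \to \langle X \rangle_b - \langle X \rangle_a$ almost surely. On $E$ this almost sure limit equals $0$ by the previous paragraph, yet it also equals $\langle X \rangle_b - \langle X \rangle_a > 0$ by hypothesis; this forces $\P(E) = 0$, contradicting positivity. Hence $X$ is almost surely not $\alpha$-H\"older on $[a,b]$, for any $\alpha > 1/2$. The one point requiring care --- and the main (though mild) obstacle --- is precisely this interchange between the pathwise, almost sure H\"older statement and the in-probability convergence of the quadratic variation sums; extracting an almost surely convergent subsequence of partitions is what makes the contradiction rigorous.
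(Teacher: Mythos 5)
Your proof is correct, and it takes a genuinely different route from the paper's. The paper first discards the finite variation part, then (after a patching construction to reduce to the case where $\langle X \rangle_b - \langle X \rangle_a$ exceeds a deterministic $\delta$) applies the Dubins--Schwarz theorem to time-change $M$ into a Brownian motion, uses the law of large numbers on the $1/\alpha$-powers of the Brownian increments taken at quadratic-variation stopping times, and concludes via the pigeonhole principle that the H\"{o}lder ratio exceeds $1$ infinitely often. You instead invoke the characterization of $\langle X \rangle$ as the limit in probability of sums of squared increments along partitions with vanishing mesh, bound those sums by $C^2 (b-a)\, \| \pi_m \|^{2\alpha - 1} \to 0$ on the H\"{o}lder event, and extract an almost surely convergent subsequence to force that event to be null. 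Your argument is shorter and more elementary: it needs no time change, no case analysis for a random quadratic-variation increment, and in fact only uses $\langle X \rangle_b > \langle X \rangle_a$ rather than strict monotonicity on all of $[a,b]$, so it proves a slightly stronger statement. It also sidesteps a delicate point in the paper's reduction, since the convergence of the squared-increment sums of $X$ to $\langle M \rangle_b - \langle M \rangle_a$ already absorbs the finite variation part, whereas the paper's claim that $A$ contributes nothing to the H\"{o}lder ratio for every exponent $\beta < 1$ is not true for arbitrary finite variation processes. What the paper's heavier machinery buys is a more quantitative picture --- it locates scales, dictated by the quadratic variation, at which the H\"{o}lder bound fails --- but for the lemma as stated your approach suffices and is cleaner. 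You correctly identify the one point needing care, namely reconciling in-probability convergence with the pathwise bound, and your subsequence extraction handles it rigorously.
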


\begin{proof}[Proof of Lemma \ref{not_Holder}]
Note that for any $\beta < 1$ and $t \in (a,b)$,
\[
\limsup_{s \to t} \frac{\left|  A_t - A_s \right|}{|t-s|^{\beta}} = 0.
\]
So without loss of generality we can assume that $A = 0$ and that $X$ is a local martingale. First consider the case where there exists a non-random $\delta>0$ such that $\langle X \rangle_b - \langle X \rangle_a > \delta$, with probability one. Fix $\alpha > 1/2$. We will show that $X_t$ is not $\alpha$-H\"{o}lder continuous. For $n \in \mathbb{N}$ and $1 \leq i \leq n$, define the stopping times
$$\tau_{i}^n = \inf_{t} \left \{ \langle X \rangle_t - \langle X \rangle_a > \frac{i}{n} \delta \right \}.$$
Note that $\tau_n^n < b$ and so by the Dubins-Schwarz Theorem,
$$ \left( X_{\tau_{i+1}^n} - X_{\tau_{i}^n} \text{, } i=1,...,n \right) \buildrel d \over = \left( B \left( \frac{i+1}{n} \delta \right) -  B \left( \frac{i}{n} \delta \right) \text{, } i=1,...., n \right), $$
where $B(t)$ is a standard Brownian motion.
Since
$\E \left|  B \left( \frac{i+1}{n} \delta \right) -  B \left( \frac{i}{n} \delta \right) \right|^{\frac{1}{\alpha}}   = C_p \left(  \frac{\delta}{2} \right)^{\frac{2}{\alpha} } n^{-2 \alpha}$,  the weak law of large numbers gives that
\begin{align*}
\sum_{i=1}^n \left|  B \left( \frac{i+1}{n} \delta \right) -  B \left( \frac{i}{n} \delta \right) \right|^{\frac{1}{\alpha}}   \to \infty,
\end{align*}
in probability. Let
\begin{align*}
A_n = \bigcup_{i=1}^n \left \{ |X_{\tau_{i+1}^n} - X_{\tau_{i}^n}| > \left( \tau_{i+1}^n - \tau_{i}^n \right)^{\alpha} \right \}
\end{align*}
be the event that $X_t$ is not $\alpha$-H\"{o}lder at level $n$. By the pigeonhole principle,
\begin{align*}
\P \left(A_n \right) \geq  \P \left(   \sum_{i=1}^n  |X_{\tau_{i+1}^n} - X_{\tau_{i}^n} |^{\frac{1}{\alpha}} > (b-a)   \right).
\end{align*}
The convergence of the right hand side to $1$ gives that
$\P \left( A_n \text{ i.o.} \right) =1. $
Finally, since $\langle X \rangle_t$ is strictly increasing we have that with probability one $\sup_{1 \leq i \leq n}  ( \tau_{i+1}^n - \tau_{i}^n) \to 0$ as $n \to \infty$. This finishes the proof of this case.

Now consider the general case. For every $\epsilon >0$, we can find a $\delta >0$ and non-random $a<T<b$ such that $\P(\langle X \rangle_T - \langle X \rangle_a > \delta) \geq 1-\epsilon$. Consider the process
\begin{align*}
\begin{matrix}
\tilde{M}_t =
\begin{cases}
X_t \qquad t \leq T \\
\begin{cases}
X_t  & \mbox{  } t > T \text{, } \langle X \rangle_T - \langle X \rangle_a \geq \delta \\
X_T + B_{\frac{\delta}{b-T}(t-T)} &  \text{ } t > T  \text{, } \langle X \rangle_T - \langle X \rangle_a < \delta
\end{cases}
\end{cases}
\end{matrix}
\end{align*}
Then $\tilde{X}_t$ is a continuous martingale with $[\tilde{X}]_b - [\tilde{X}]_a > \delta$. Therefore $\tilde{X}_t$ is not $\alpha$-H\"{o}lder for any $\alpha > 1/2$ and so with probability greater than $1- \epsilon$ neither is $X_t$. Since this is true for any $\epsilon$ this completes the proof.
\end{proof}

\section{Applications to Other Models \label{sec:applications}}

\subsection{Tree Polymers \label{sec:polymers}}

Although this paper was written in the language of multiplicative cascades it was strongly motivated by the literature on tree polymers. The polymer model is virtually identical but the language is mildly different: to the vertices of the tree attach iid random variables $\{ \omega(v) \}_{v \in \mt}$, and at inverse temperature $\beta$ and level $n$ define the polymer measure on $\partial \mt$ by
\begin{align*}
d \Gamma_{\omega, \beta}^{(n)}(\xi) := \frac{1}{Z_{\omega, \beta}^{(n)}} \prod_{i=1}^n \exp \left \{ \beta \omega(\xi_i) \right \} \, d\Gamma(\xi).
\end{align*}
Here $Z_{\omega,\beta}^{(n)}$ is the partition function
\begin{align*}
Z_{\omega, \beta}^{(n)} = \int_{\partial \mt} d \Gamma_{\omega, \beta}^{(n)}(\xi) = \Gamma_{\omega, \beta}^{(n)}(\root).
\end{align*}
In the tree polymer model we usually assume that $\Gamma$ is a probability measure, and hence the partition function normalizes the polymer measure to also have mass one. Typically only the Lebesgue measure $\theta$ is used as the base measure, but we will continue to describe the model in this greater generality where any $\Gamma$ can be used. The only assumption on the $\omega$ is that $e^{\lambda(\beta)} := \expect{e^{\beta \omega}} < \infty$ for all $\beta \in \R$. Clearly then the polymer measure can be expressed as a cascade measure with
\begin{align*}
d \Gamma_{\omega, \beta}^{(n)}(\xi) = \frac{e^{n \lambda(\beta)}}{Z_{\omega, \beta}^{(n)}} \, d \Gamma_{W_{\beta}}^{(n)}(\xi) = \frac{d \Gamma_{W_{\beta}}^{(n)}(\xi)}{\Gamma_{W_{\beta}}^{(n)}(\root)},
\end{align*}
with $W_{\beta}(v) = \exp \left \{ \beta \omega(v) - \lambda(\beta) \right \}$. If $\Gamma$ is $W_{\beta}$-regular then Section \ref{sec:Background} shows that the limiting polymer measure exists and is given by
\begin{align*}
\lim_{n \to \infty} d \Gamma_{\omega, \beta}^{(n)}(\xi) = \frac{d \Gamma_{W_{\beta}}(\xi)}{\Gamma_{W_{\beta}}(\root)}.
\end{align*}
If $\Gamma$ is not $W_{\beta}$-regular it is still an open problem as to whether or not a limit exists. Subsequential limits automatically exists because each finite level polymer measure is normalized to be a probability measure and the tree boundary $\partial \mt$ is compact, but the structure of the set of subsequential limits is not known. See \cite{WW10} for more on this problem.

Applying our cascade process to the study of polymer measures is most helpful whenever the family of cascading distributions $W_{\beta} = \exp \left \{ \beta \omega - \lambda(\beta) \right \}$ can be represented by a process $W_t$ satisfying Assumption \ref{assumption:basic}. By this we mean that the processes $W_{\beta}$ and $W_t$ have the same marginal distributions at fixed times (up to a possible change of variables between $\beta$ and $t$), but $W_t$ has the independent increments property of Assumption \ref{assumption:basic}. In this case, the cascade process of Section \ref{sec:process} gives us a coupling of the polymer measures at different temperatures that is different from the standard one obtained by simply multiplying the same variables by a different factor. The advantage of our coupling is that it has the Markov property implied by Section \ref{sec:markov_property}. In polymer language this Markov property has a nice interpretation: the polymer measure at a given temperature can be constructed by choosing a polymer at any higher temperature and then placing it in a new and independent environment. Most importantly, the higher temperature does \textit{not} have to be infinite.

The simplest case of a weight process satisfying the above is the Gaussian weights of Section \ref{sec:gaussian}. The scaling properties of Brownian motion imply that in this case the $t$ variable acts as both a time and an inverse temperature. This gives a nice interpretation to the stochastic calculus results of Proposition \ref{prop:total_mass_SDE}. The SDE for $\Gamma_t(\root)$ tells us that the total mass at the root evolves according to a weighted measure of the Brownian noise being inputted, with the weights prescribed by the polymer measure at the time infinitesimally beforehand. The formula for the quadratic variation tells us that it evolves according to the \textit{overlap} of the polymer measure, that is the expected amount of time that two polymers paths chosen independently under $\Gamma_t^*$ will spend together before eventually splitting. The explosion time of the cascade process is exactly when the accumulated overlap reaches infinity.

The Girsanov theory is also useful in this context. The tree polymer model can be thought of as a model of random walk in a random environment, where the random variables $\omega$ act as the environment. For this part we assume that $\Gamma = \theta$, and under the measure $\theta_{W_{\beta}}^*$ the process $\xi_0, \xi_1, \xi_2, \ldots$, is Markov with transition probabilities given by
\begin{align*}
\theta_{W_{\beta}}^* \left( \xi_{i+1} = (\xi_i)_L | \xi_0, \xi_1, \ldots, \xi_i \right) = \frac{\theta_{W_{\beta}}((\xi_i)_L)}{\theta_{W_{\beta}}(\xi_{i})}.
\end{align*}
To study this type of RWRE one typically uses the ``point of view of the particle'', which is the study of the environment Markov chain defined by
\begin{align*}
Z_n = \{ \omega(u) \}_{u \in \mt(\xi_n)}.
\end{align*}
Note that $Z_n$ takes values in the space of environments. It is straightforward to verify that if $Q$ is a measure under which the $\omega$ are iid random variables and $\xi$ is chosen according to the polymer measure $\theta_{W_{\beta}}^*$, then $Z_n$ is a stationary Markov process with the same transition probabilities as the $\xi_i$ Markov chain, i.e.
\begin{align*}
P \left( Z_{i+1} = \{ \omega(u) \}_{u \in \mt((\xi_i)_L)} | Z_0, \ldots, Z_i \right) = \theta_{W_{\beta}}^* \left( \xi_{i+1} = (\xi_i)_L | \xi_0, \xi_1, \ldots, \xi_i \right).
\end{align*}
See \cite{Z04} for more on the environment Markov chain. It begins in stationarity, with the stationary distribution being $\theta_{W_{\beta}}(\root) \, dQ(\omega)$. The Girsanov theory of Corollary \ref{corollary:girsanov} gives a way to analyze this stationary distribution. Assume that under $Q$ the $\omega$ are iid $N(0, T')$ for some $T' < 2 \log 2$. Then under $\theta_{\omega}(\root) \, dQ(\omega)$ the variables $\omega$ have the law of
\begin{align*}
\int_0^{T'} \frac{\theta_s(v)}{\theta_s(\root)} \, ds + \tilde{B}_{T'}(v),
\end{align*}
where the $\tilde{B}_t(v)$ are iid Brownian motions on the vertices of the tree. This gives an alternate description of the stationary measure for the environment Markov Chain.

\subsection{One-Dimensional Random Geometry and KPZ \label{sec:KPZ}}

Multiplicative cascades have also been used as a toy model for studies of random geometry, most notably in \cite{BS09}. There one considers the pushforward of $\Gamma_W$ onto the interval $[0,1]$ via binary expansion; left turns in $\xi$ correspond to zeros in the binary expansion and right turns to ones. We use $\Gamma_W$ to also denote the distribution function of the measure on $[0,1]$, i.e.
\begin{align*}
\Gamma_W(x) = \Gamma_W([0,x]).
\end{align*}
If $\Gamma_W$ is strictly positive, then $\Gamma_W(x)$ is a continuous, non-decreasing function on $[0,1]$. If $\Gamma_W(v) > 0$ for every $v \in \mt$, then $x \mapsto \Gamma_W(x)$ is strictly increasing and hence a continuous bijection of $[0,1]$ onto $[0, \Gamma_W(1)]$. In the case $\Gamma = \theta$, Benjamini and Schramm used this map to establish a relation between the Hausdorff dimension of a set and its random image under $\theta_W$. Specifically they show the following:

\begin{theorem}[\cite{BS09}]
Let $W$ be a cascading distribution with $\expect{W \log W} < \log 2$ (so that $\theta$ is $W$-regular), and assume that $\expect{W^{-s}} < \infty$ for all $s \in [0,1)$. Let $K \subset [0,1]$ be some non-empty, deterministic set. Then there is the following \textbf{KPZ formula}:
\begin{align*}
\hdim K = \phi_W \left( \hdim \theta_W(K) \right),
\end{align*}
where $\theta_W(K)$ is the (random) image of $K$ via the distribution function $\theta_W$, and $\phi_W$ is the deterministic bijection from $[0,1]$ onto $[0,1]$ given by
\begin{align*}
\phi_W(h) = h - \log_2 \expect{W^h}.
\end{align*}
\end{theorem}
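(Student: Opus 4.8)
The plan is to work entirely on the tree, exploiting the fact that the image $\theta_W(\partial\mt(v))$ of a cylinder is an interval of Euclidean length exactly $\theta_W(v)$, so that a cylinder cover $\{v_j\}$ of $K$ pushes forward to a cover of $\theta_W(K)$ by intervals of lengths $\{\theta_W(v_j)\}$, and conversely. Since $\phi_W$ is a strictly increasing bijection of $[0,1]$ (the binding condition $\phi_W'(1)>0$ being exactly the $W$-regularity of $\theta$), it suffices to prove the two matching bounds $\phi_W(\hdim\theta_W(K))\le\hdim K$ and $\phi_W(\hdim\theta_W(K))\ge\hdim K$. The single computation driving everything is the moment estimate: by the self-similarity relation \eqref{eq:subtree_mass}, for $|v|=k$ one has $\theta_W(v)\stackrel{d}{=}X(v)\,2^{-k}M_v$ with $M_v$ an independent copy of the total mass $\theta_W(\root)$, so that for $h\in(0,1)$,
\begin{align*}
\expect{\theta_W(v)^h} = \expect{\theta_W(\root)^h}\,\bigl(2^{-h}\expect{W^h}\bigr)^{k} = C_h\,(2^{-k})^{\phi_W(h)},
\end{align*}
using $2^{-h}\expect{W^h}=2^{-\phi_W(h)}$ and $C_h=\expect{\theta_W(\root)^h}<\infty$ (finite by Jensen since $h<1$). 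In words, the expected quantum $h$-content of any cover equals $C_h$ times its Euclidean $\phi_W(h)$-content.

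For the upper bound on the quantum dimension I would argue as follows. Fix $t$ with $\phi_W(t)>\hdim K$. Since the Euclidean $\phi_W(t)$-dimensional Hausdorff measure of $K$ is zero, there is a sequence of cylinder covers of $K$ with $\sum_j(2^{-|v_j|})^{\phi_W(t)}\to0$. The moment identity gives $\expect{\sum_j\theta_W(v_j)^t}=C_t\sum_j(2^{-|v_j|})^{\phi_W(t)}\to0$, so by Markov's inequality and Borel--Cantelli the quantum $t$-Hausdorff measure of $\theta_W(K)$ vanishes almost surely. Hence $\hdim\theta_W(K)\le t$, and letting $\phi_W(t)\downarrow\hdim K$ yields $\hdim\theta_W(K)\le\phi_W^{-1}(\hdim K)$, i.e. $\phi_W(\hdim\theta_W(K))\le\hdim K$.

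The lower bound is where the real work and the role of the negative-moment hypothesis lie, and I expect it to be the main obstacle. The difficulty is that the infimum defining the Hausdorff measure of $\theta_W(K)$ is taken over covers chosen \emph{after} the cascade is revealed, so the clean first-moment identity is not by itself enough to bound it from below. The plan is a mass-distribution (Frostman) argument: choose a measure $\mu$ on $K$, form $\tilde\mu=(\theta_W)_*\mu$, and aim to show $\tilde\mu(B(x,r))\lesssim r^{t}$ for $\tilde\mu$-a.e.\ $x$ whenever $\phi_W(t)<\hdim K$, which forces $\hdim\theta_W(K)\ge t$. Translated to the tree this requires lower bounds on the cascade mass of cylinders along $\mu$-typical rays, and it is exactly here that $\expect{W^{-s}}<\infty$ for $s\in[0,1)$ enters, through the negative-moment estimate $\expect{\theta_W(v)^{-s}}<\infty$ together with a Borel--Cantelli argument ruling out cylinders on which $\theta_W$ is anomalously compressed. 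The subtle point, and the heart of the matter, is that a single uniform lower H\"{o}lder exponent for $\theta_W$, or a generic $(\hdim K)$-dimensional choice of $\mu$, yields only a \emph{linear} bound of the form $\hdim K/\kappa$ that is strictly weaker than $\phi_W^{-1}(\hdim K)$; recovering the sharp exponent requires a multifractal analysis in which $\mu$ is chosen so that its typical rays see the cascade scaling that optimally balances the dimension of $K$ against the moment cost $\log_2\expect{W^h}$. I would carry this out by decomposing $K$ according to the local scaling exponent of $\theta_W$ and optimizing, the Legendre duality between $\phi_W$ and the multifractal spectrum producing the stated formula; a second-moment estimate on the surviving mass of a well-chosen cover is a plausible alternative route to the same bound.

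Finally I would assemble the two inequalities and use that $\phi_W$ is a bijection to conclude $\hdim K=\phi_W(\hdim\theta_W(K))$. I would also record a zero--one law guaranteeing that, for a deterministic set $K$, $\hdim\theta_W(K)$ does not depend on any finite collection of the $W(v)$ and is therefore almost surely constant, so that the equality holds as a genuine almost sure identity.
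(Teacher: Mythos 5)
This theorem is imported from Benjamini and Schramm \cite{BS09}; the paper you are reading gives no proof of its own, so your attempt can only be measured against the original argument and on its internal merits. Your first half is sound: the identity $\expect{\theta_W(v)^h} = \expect{\theta_W(\root)^h}\,(2^{-|v|})^{\phi_W(h)}$ for $h \in (0,1)$ is correct (independence of $X(v)$ from the cascade inside $\mt(v)$ plus self-similarity, as in equation \eqref{eq:subtree_mass}), the constant is finite by Jensen, and the first-moment plus Borel--Cantelli argument does give $\phi_W(\hdim \theta_W(K)) \leq \hdim K$ almost surely after intersecting over a countable set of exponents decreasing to the critical one. Your diagnosis of where the difficulty lies, and of why the hypothesis $\expect{W^{-s}} < \infty$ is needed only for the reverse inequality, is also accurate.

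The problem is that the second half is a plan rather than a proof, and it is precisely the half that carries the content of the theorem. Two concrete things are missing. First, you need negative moments of the total mass, $\expect{\theta_W(\root)^{-s}} < \infty$, not merely of $W$; this is a separate, nontrivial fact about cascades (it does hold here, but it requires its own argument, e.g. via the distributional recursion $M \stackrel{d}{=} \tfrac12 W_L M_L + \tfrac12 W_R M_R$ and a bootstrap on $\P(M < \epsilon)$). Second, and more seriously, the multifractal optimization that is supposed to upgrade the naive linear bound $\hdim K / \kappa$ to the sharp $\phi_W^{-1}(\hdim K)$ is exactly the step you do not carry out: ``decompose $K$ by local scaling exponent and optimize'' names the difficulty without resolving it, and the alternative ``second-moment estimate on a well-chosen cover'' is not specified. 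For comparison, Benjamini and Schramm do not pass through a Frostman measure on $K$ at all: they take a near-optimal cover of the image $\theta_W(K)$, pull each interval $I$ back to the family of maximal dyadic cylinders whose image meets $I$ and has quantum length at most $|I|$, and bound the expected Euclidean $\beta$-content of the resulting cover of $K$ directly, with the negative moments of the mass controlling how many badly compressed cylinders a single small interval can absorb. That keeps the whole proof at the level of first-moment estimates. As written, your sketch establishes only one of the two inequalities.
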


Applying our process to this setup gives some interesting interpretations. Let $\theta_t$ and $\phi_t$ denote the corresponding cascade process and bijection when we replace $W$ by dynamic weights $W_t$. As time evolves, the image set $\theta_t(K)$ moves about the line and its Hausdorff changes with it, yet the dimension evolves deterministically even though the set evolves randomly. Remark \ref{remark:increasing_moments} and the formula above tell us that $\phi_t(h)$ is a decreasing function of $t$ for each fixed $h$, and hence Hausdorff dimensions get smaller as time evolves. Using our process it is possible to understand the infinitesimal evolution of the dimension. Indeed write $d(t) = \hdim \theta_t(K)$, and then the KPZ formula becomes
\begin{align*}
d(0) = \phi_t(d(t)).
\end{align*}
Differentiating both sides with respect to $t$ leads to an ODE for $d(t)$:
\begin{align*}
\dot{d}(t) = - \frac{\dot{\phi}_t(d(t))}{\phi_t'(d(t))}.
\end{align*}
The particulars of this ODE depends on the type of weight process being used. For example in the case of Gaussian weights as in Section \ref{sec:gaussian} it becomes
\begin{align*}
\dot{d} = - \frac{d(1-d)}{2 \log 2 - t(2d-1)} =: \psi_t(d).
\end{align*}
This ODE has many interesting aspects. First note that the $2 \log 2$ appears because it is the lifetime of the $\theta_t$ process, that is the time at which it collapses to the zero measure. Further, by the presence of the $t$ term in the denominator the ODE is non-autonomous, except at $d = 1/2$ where the non-autonomous term strangely disappears. It can also be shown that
\begin{align*}
\lim_{t \uparrow 2 \log 2} = 1 - \sqrt{1 - d(0)},
\end{align*}
so that even as $\theta_t$ approaches the zero measure the Hausdorff dimension of the random set stays bounded away from zero.

Although the work of Benjamini and Schramm can be used to derive the infinitesimal evolution of the Hausdorff dimension, in principle it should be possible to derive it separately and use it to give an alternate proof of their KPZ formula. All that needs to be found is a proof of the relation
\begin{align*}
\hdim \theta_{t+\delta}(K) = \hdim \theta_t(K) + \psi_t(\hdim \theta_t(K)) \delta + o(\delta)
\end{align*}
that does not use the Benjamini and Schramm statement (although many of the techniques of their proof would probably be incorporated), and then the Markov property of the $\theta_t$ process turns this infinitesimal relation at a fixed time into the ODE that holds at all times. We have attempted to derive this relation but thus far been unable to, although we hope a proof will be at hand soon. In fact we believe that there is a slightly more general fact lurking in the background: namely that if $\Gamma$ is an initial measure and $W$ a cascading distribution that is a small perturbation away from the degenerate distribution at one, then
\begin{align*}
\hdim \Gamma_W(K) = \hdim  \Gamma(K) + \psi_{\Gamma,W}(\hdim \Gamma(K)).
\end{align*}
Here $\psi_{\Gamma, W}$ would be a deterministic function determined by the properties of $\Gamma$ and the size and type of the perturbation of $W$ away from one. The infinitesimal relation is given by the ``derivative'' of $\psi$ as the cascading distribution concentrates at one. It is not clear to us exactly how the properties of $\Gamma$ enter into the picture, although we expect that they must in some form. It is also not clear if the relation above will be independent of the set $K$ for all initial measures $\Gamma$, although we expect it will be for initial measures with some type of self-similarity.

\bibliographystyle{alpha}
\bibliography{../StochasticMPBib}

\begin{thebibliography}{MCRT11}

\bibitem[Big77]{Big77}
J.~D. Biggins.
\newblock Martingale convergence in the branching random walk.
\newblock {\em J. Appl. Probability}, 14(1):25--37, 1977.

\bibitem[BKL02]{BKL02}
Anton Bovier, Irina Kurkova, and Matthias L{\"o}we.
\newblock Fluctuations of the free energy in the {REM} and the {$p$}-spin {SK}
  models.
\newblock {\em Ann. Probab.}, 30(2):605--651, 2002.

\bibitem[BS09]{BS09}
Itai Benjamini and Oded Schramm.
\newblock K{PZ} in one dimensional random geometry of multiplicative cascades.
\newblock {\em Comm. Math. Phys.}, 289(2):653--662, 2009.

\bibitem[CN95]{CN95}
F.~Comets and J.~Neveu.
\newblock The {S}herrington-{K}irkpatrick model of spin glasses and stochastic
  calculus: the high temperature case.
\newblock {\em Comm. Math. Phys.}, 166(3):549--564, 1995.

\bibitem[EK86]{EK86}
Stewart~N. Ethier and Thomas~G. Kurtz.
\newblock {\em Markov processes}.
\newblock Wiley Series in Probability and Mathematical Statistics: Probability
  and Mathematical Statistics. John Wiley \& Sons Inc., New York, 1986.
\newblock Characterization and convergence.

\bibitem[Fan02]{F02}
Ai~Hua Fan.
\newblock On {M}arkov-{M}andelbrot martingales.
\newblock {\em J. Math. Pures Appl. (9)}, 81(10):967--982, 2002.

\bibitem[FK]{F}
Ai~Hua Fan and J.P. Kahane.
\newblock Decomposition principle in multiplicative chaos.
\newblock {\em Preprint}, available at
  http://www.mathinfo.u-picardie.fr/fan/papers.html.

\bibitem[HW92]{HW92}
Richard Holley and Edward~C. Waymire.
\newblock Multifractal dimensions and scaling exponents for strongly bounded
  random cascades.
\newblock {\em Ann. Appl. Probab.}, 2(4):819--845, 1992.

\bibitem[KP76]{KP76}
J.-P. Kahane and J.~Peyri{\`e}re.
\newblock Sur certaines martingales de {B}enoit {M}andelbrot.
\newblock {\em Advances in Math.}, 22(2):131--145, 1976.

\bibitem[KS91]{KS91}
Ioannis Karatzas and Steven~E. Shreve.
\newblock {\em Brownian motion and stochastic calculus}, volume 113 of {\em
  Graduate Texts in Mathematics}.
\newblock Springer-Verlag, New York, second edition, 1991.

\bibitem[LR00]{LR00}
Quansheng Liu and Alain Rouault.
\newblock Limit theorems for {M}andelbrot's multiplicative cascades.
\newblock {\em Ann. Appl. Probab.}, 10(1):218--239, 2000.

\bibitem[MCRT11]{MRT11}
David M{\'a}rquez-Carreras, Carles Rovira, and Samy Tindel.
\newblock A model of continuous time polymer on the lattice.
\newblock {\em Commun. Stoch. Anal.}, 5(1):103--120, 2011.

\bibitem[OW00]{OW02}
Mina Ossiander and Edward~C. Waymire.
\newblock Statistical estimation for multiplicative cascades.
\newblock {\em Ann. Statist.}, 28(6):1533--1560, 2000.

\bibitem[RY99]{RV99}
Daniel Revuz and Marc Yor.
\newblock {\em Continuous martingales and {B}rownian motion}, volume 293 of
  {\em Grundlehren der Mathematischen Wissenschaften [Fundamental Principles of
  Mathematical Sciences]}.
\newblock Springer-Verlag, Berlin, third edition, 1999.

\bibitem[Tin05]{Tin05}
Samy Tindel.
\newblock On the stochastic calculus method for spins systems.
\newblock {\em Ann. Probab.}, 33(2):561--581, 2005.

\bibitem[vBE65]{BE65}
Bengt von Bahr and Carl-Gustav Esseen.
\newblock Inequalities for the {$r$}th absolute moment of a sum of random
  variables, {$1\leq r\leq 2$}.
\newblock {\em Ann. Math. Statist}, 36:299--303, 1965.

\bibitem[WW95]{WW95}
E.~C Waymire and S.~C. Williams.
\newblock Multiplicative cascades: dimension spectra and dependence.
\newblock {\em J. Fourier Anal. Appl. Special Issue}, pages 589--609, 1995.

\bibitem[WW10]{WW10}
Edward~C. Waymire and Stanley~C. Williams.
\newblock T-martingales, size biasing, and tree polymer cascades.
\newblock In {\em Recent developments in fractals and related fields}, Appl.
  Numer. Harmon. Anal., pages 353--380. Birkh\"auser Boston Inc., Boston, MA,
  2010.

\bibitem[Zei04]{Z04}
Ofer Zeitouni.
\newblock Random walks in random environment.
\newblock In {\em Lectures on probability theory and statistics}, volume 1837
  of {\em Lecture Notes in Math.}, pages 189--312. Springer, Berlin, 2004.

\end{thebibliography}

\end{document}